	\def\?[#1]{\textbf{[#1]}\marginpar{\Large{\textbf{??}}}}%
\numberwithin{equation}{section}
\newcommand{\vertiii}[1]{{\left\vert\kern-0.25ex\left\vert\kern-0.25ex\left\vert #1 
		\right\vert\kern-0.25ex\right\vert\kern-0.25ex\right\vert}}
\newcommand{\be}{\begin{equation}}
	\newcommand{\ee}{\end{equation}}
\newcommand{\ov}{\overline}
\newcommand{\e}{{\mathrm{e} } }
\renewcommand{\Im}{\mathop{\rm Im}\nolimits}
\theoremstyle{plain}
\newtheorem{thm}{Theorem}[section]
\newtheorem{prop}{Proposition}[section]
\newtheorem{cor}[thm]{Corollary}
\newtheorem{lem}[prop]{Lemma}
\newtheorem{definition}[prop]{Definition}
\theoremstyle{definition}
\newtheorem{rem}[prop]{Remark}
\numberwithin{equation}{section}
\def\squarebox#1{\hbox to #1{\hfill\vbox to #1{\vfill}}}
\newcommand{\dd}{\mathrm{d}}
\newcommand{\noi}{\noindent}
\title[Quasi-invariant measures for the energy critical NLS]
{Quasi-invariance of Gaussian measures for the $3d$ energy critical nonlinear Schr\" odinger equation}
\author{Chenmin Sun}
\address{CNRS, Université Paris-Est Créteil, Laboratoire d’Analyse et de Mathématiques appliquées, UMR 8050 du CNRS, 94010 Créteil cedex, France}
\email{chenmin.sun@cnrs.fr}
\author{Nikolay Tzvetkov}
\address{Ecole Normale Sup\'erieure de Lyon, UMPA, UMR CNRS-ENSL 5669, 46, all\'ee d'Italie, 69364-Lyon Cedex 07, France}
\email{nikolay.tzvetkov@ens-lyon.fr}
\def\11{{\rm 1~\hspace{-1.4ex}l} }
\def\R{\mathbb R}
\def\C{\mathbb C}
\def\Z{\mathbb Z}
\def\N{\mathbb N}
\def\T{\mathbb T}
\def\fk{\mathfrak}
\begin{document}    
	
%	\definecolor{backgroundcolor}{RGB}{199, 238, 206} %%调节背景颜色2
%	\pagecolor{backgroundcolor} %%调节背景颜色3	
	
 \dedicatory{ Dedicated to Professor Carlos Kenig for his 70's birthday with admiration}
	
	\begin{abstract}
	We consider the $3d$ energy critical nonlinear Schr\" odinger equation with data distributed according to the Gaussian measure with covariance operator $(1-\Delta)^{-s}$, where $\Delta$ is the Laplace operator and $s$ is sufficiently large. We prove that the flow sends full measure sets to full measure sets. We also discuss some simple applications. This extends a previous result by Planchon-Visciglia and the second author from $1d$ to higher dimensions. 
	\end{abstract}   
	
	\maketitle 
	%	\setlength{\parskip}{0.3em}  
	%	\tableofcontents

	\section{Introduction}
\subsection{Motivation}	
The seminal paper \cite{F} initiated the study of Hamiltonian PDE's with initial data distributed according to the Gibbs measure which is constructed from the Hamiltonian functional. The Gibbs measure construction is strongly inspired by earlier developments in quantum field theory (see e.g. \cite{GJ,Simon}). These Gibbs measures are absolutely continuous with respect to suitable Gaussian measures (or shifts of such Gaussian measures). They are at least formally invariant under the corresponding Hamiltonian flow and therefore the underlying Gaussian measure (or its shift) are quasi-invariant under the flow. 
		
In dimensions greater than or equal to two, in order to consider initial data distributed according to the Gibbs measure a  renormalization of the equation under consideration is required, see e.g. \cite{Bo96, BDNY, FOSW, DNY1, ORT, OT}. 
Such renormalizations have strong motivations from Physics but they also make the results not so natural from a classical PDE perspective. A notable exception is the cubic  nonlinear Schr\" odinger equation for which a gauge transform links the (truncated) equation and its renormalized version. 
		
One may also observe that full Gibbs measure sets cover a very tiny part of the phase space of a Hamiltonian PDE and also that the Gibbs measure plays no role in the dynamics of most of the initial distributions of the initial data. %Observe that this is  in sharp contrast with Langevin type dynamics where the (same) Gibbs measure plays a truly distinguished role because it attracts all initial distributions.

Motivated by the above observations, in recent years there has been an activity aiming to show that a more general set of gaussian measures are quasi-invariant under Hamiltonian PDE's, see  \cite{BT, deb, forl, FS, forl_tol, GLT1,GLT2, 3DNLW, PTV1, PTV2, OS,OST,OTz4NLS,OTT,2DNLS, sigma,SXT}.  Such results allow to give a statistical description of the Hamiltonian flow for a larger class of initial distributions of the initial data (out of equilibrium dynamics). In particular, one obtains results for data of arbitrary Sobolev regularity  while the Gibbs measures live in low regularity Sobolev spaces. Moreover, no renormalization of the equation is required (even if renormalized energies may be used in the proof, see \cite{2DNLS,3DNLW,SXT}).  It is also worth to observe that it looks that  the question of  quasi-invariance of gaussian measures for Hamiltonian PDE's does not seem to have an analogue in the context of dissipative PDE's. 

Most of the results quoted in the previous paragraph are dealing with $1d$ models. The only results in dimensions $\geq 2$ are \cite{2DNLS,3DNLW,SXT}. They are dealing with non linear wave equations. The approach used in these works, based on renormalized energies, does not apply to the nonlinear Schr\" odinger equation (NLS) because of the lack of explicit smoothing in the equation. Our goal here is to resolve this issue and prove the quasi-invariance of Gaussian measures supported by sufficiently regular functions under the NLS flow in  higher dimensions. Our approach is based on normal form reductions as in \cite{OS,OST,OTz4NLS} combined with a soft analysis initiated in \cite{PTV1}. The main new idea in this paper is the identification of a remarkable cancellation of the worse pairing when estimating the divergence of the Hamiltonian vector field with respect to a weighted Gaussian measure (see Section \ref{Section:singular} below). The weight is naturally produced by the normal form reduction and therefore is related to the nature of the resonant set  (while is the wave equation case the weight is related to the potential energy). 
This remarkable  cancellation is certainly related to the Hamiltonian structure and hopefully may be used in other contexts. 
Our result is only giving qualitative quasi-invariance for sufficiently regular initial distributions. Therefore several challenging issues remain open (see the remarks after the statement of the main result). 
%%%%%%%%%%%%%%%%%%%%%%%
%%%%%%%%%%%%%%%%%%%%%%%%%%
\subsection{Main result} 
In this work we will study the most challenging model to which we succeeded to make work our approach. 
Therefore, we consider  the defocusing energy-critical NLS
\begin{align}\label{NLS} 
i\partial_tu +\Delta u=|u|^{4}u,\quad (t,x) \in \mathbb{R}\times\T^3,
\end{align}
where  $\T^3:=\R^3/(2\pi\Z)^3$.  Equation \eqref{NLS} is a Hamiltonian system with the conserved mass and energy: 
$$ 
M[u]:=\int_{\T^3} |u|^2 \mathrm{d}x,\quad H[u]:= \frac{1}{2} \int_{\T^3}|\nabla u|^2\mathrm{d}x+\frac{1}{6}\int_{\T^3}|u|^{6}\mathrm{d}x.
$$
These conservation laws allow to construct relatively easily global weak solutions of \eqref{NLS} in the Sobolev spaces $H^1(\T^3)$ via basic compactness arguments. 
Unfortunately such techniques are not suitable to prove uniqueness and propagation of higher Sobolev regularities. 
Thanks to the remarkable work by Ionescu-Pausader \cite{IoPau} (based on the previous contributions \cite{BaGe,Bo_JAMS,CKSTT,HTTz,KeMe}) we know that \eqref{NLS} is globally well posed in $H^{\sigma}(\T^3)$, $\sigma\geq 1$. Namely, for every $u_0\in H^{\sigma}(\T^3)$, $\sigma\geq 1$ there exists a unique global solution of \eqref{NLS} in $C(\R;H^{\sigma}(\T^3))$ such that $u(0,x)=u_0(x).$
Let us denote by $\Phi(t)$ the Ionescu-Pausader flow of \eqref{NLS}. 

When studying the statistical properties of \eqref{NLS}, we assume that the initial data are distributed according to the Gaussian probability measure $\mu_s$, formally defined as 	``$\frac{1}{\mathcal{Z}}\mathrm{e}^{-\frac{1}{2}\vertiii{u}_{H^s}^2}\dd u$'', induced by the random Fourier series
\begin{align}\label{randominitial}  
\phi^{\omega}(x):=\sum_{k\in\Z^3}\frac{g_k(\omega)}{\sqrt{1+|k|^{2s}}}\mathrm{e}^{ik\cdot x}\, ,
\end{align}
where $(g_k)_{k\in\Z^3}$ are independent, identically distributed standard complex Gaussian random variables and the equivalent Sobolev norm $\vertiii{\cdot}_{H^s}$ is defined in \eqref{def:Hsequiv}. Thanks to the Kakutani theorem we know that for at least $s\geq 10$ the measure $\mu_s$ is absolutely continuous with respect to the Gaussian measure with covariance operator $(1-\Delta)^{-s}$ 
(see e.g.  \cite{2DNLS} for such an application of the Kakutani theorem). It is well-known that
$$ 
				H^{(s-\frac{3}{2})-}(\T^3):=\bigcap_{\sigma<s-\frac{3}{2}}H^{\sigma}(\T^3)
		$$
		is of full $\mu_s$ measure and $\mu_s(H^{s-\frac{3}{2}}(\T^3))=0$.
For bigger $s$, typical functions/distributions have bigger Sobolev regularity.
 
  Thanks to \cite{IoPau}, when $s>\frac{5}{2}$, the flow $\Phi(t)$ of \eqref{NLS} exists globally on $H^{\sigma}(\T^3)$ for any $1\leq \sigma<s-\frac{3}{2}$. In particular, a unique global solution exists for any initial data on $H^{(s-\frac{3}{2})-}(\mathbb{T}^3)$,  $s>\frac{5}{2}$.
Our main result reads as follows. 
\begin{thm}\label{thm:main}
Assume that $s\geq 10$. Then $\mu_s$ is quasi-invariant under $\Phi(t)$. More precisely,  for every $t\in \R$, $(\Phi(t))_*\mu_s\ll \mu_s\ll (\Phi(t))_*\mu_s$, where  $(\Phi(t))_*\mu_s$ is the push forward of $\mu_s$ by $\Phi(t)$.
\end{thm}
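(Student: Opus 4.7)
The plan is to combine a Galerkin truncation, a normal-form reduction producing a modified energy, and the soft limiting argument of Planchon--Tzvetkov--Visciglia \cite{PTV1}, in the spirit of \cite{OS,OST,OTz4NLS}. Everything should reduce to finding a weighted Gaussian measure against which the divergence of the Hamiltonian vector field is integrable uniformly in the truncation parameter.

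\textbf{Setup.} Let $\pi_N$ be the sharp projector onto frequencies $|k|\leq N$ and let $\Phi_N(t)$ be the flow of the truncated equation $i\partial_tu+\Delta u=\pi_N(|\pi_N u|^{4}\pi_N u)$. This decouples into a finite-dimensional Hamiltonian flow on $E_N:=\pi_N L^2$, which preserves Lebesgue by Liouville, and the free unitary evolution on $E_N^{\perp}$, under which the Gaussian factor of $\mu_s$ is invariant. Hence the density $f_N(t,u):=d((\Phi_N(t))_*\mu_s)/d\mu_s$ is a Gaussian-type density on $E_N$, and its logarithmic derivative along trajectories is
\[
\partial_t \log f_N(t,\Phi_N(t)u)=-\mathrm{Im}\,\langle |\pi_N u|^{4}\pi_N u,\,\langle D\rangle^{2s}\pi_N u\rangle=:Q_N(u).
\]
If $Q_N$ is integrable against a measure equivalent to $\mu_s$ with bounds uniform in $N$, then $\mu_s$ is quasi-invariant under $\Phi_N(t)$. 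Combined with $\mu_s$-almost sure convergence $\Phi_N(t)u\to\Phi(t)u$ on $\mathrm{supp}(\mu_s)\subset H^{\sigma}$ for $\sigma<s-\tfrac{3}{2}$ (from the Ionescu--Pausader global theory), the soft scheme of \cite{PTV1} then promotes quasi-invariance to the limiting flow $\Phi(t)$.

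\textbf{Normal form.} The obstruction is that $Q_N$ is a sextic resonance sum carrying only two $\langle D\rangle^{2s}$-weights and has no manifest smoothing, so it is not uniformly integrable. I would integrate $Q_N$ by parts in time against the Schr\"odinger phase $\Phi(k_1,\ldots,k_6)=|k_1|^2-|k_2|^2+\cdots-|k_6|^2$, splitting $Q_N=\partial_t R_N(u)+\tilde Q_N(u)$, where $R_N$ is a sextic correction gaining $\Phi^{-1}$ on non-resonant frequencies and $\tilde Q_N$ concentrates on resonant interactions plus commutator errors from the nonlinearity. This replaces $\|u\|_{H^s}^2$ by the modified energy $\tilde E_N(u):=\|u\|_{H^s}^2+R_N(u)$ and shifts the analysis to the weighted Gaussian $d\tilde\mu_{s,N}\propto e^{-\tilde E_N(u)/2}\,du$, which one expects to be mutually absolutely continuous with $\mu_{s,N}$ uniformly in $N$ provided $R_N$ is small in suitable $L^p(\mu_{s,N})$-norms.

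\textbf{Main obstacle.} The delicate point is that even after the normal form, the resonant piece of $\tilde Q_N$ retains a \emph{worst pairing}: a diagonal term in which the two $\langle D\rangle^{2s}$-weights sit on the two highest conjugate frequencies, with no dispersive gain and not integrable against $\mu_{s,N}$ uniformly in $N$. The crucial observation emphasized in the introduction is that when the divergence of the Hamiltonian vector field is computed against the \emph{weighted} measure $\tilde\mu_{s,N}$ rather than the bare Gaussian, the additional contribution from $\nabla R_N$ produces a term with identical combinatorial structure but opposite sign, exactly cancelling the worst pairing. Establishing this algebraic cancellation and then controlling the sub-leading terms in $L^p(\tilde\mu_{s,N})$ uniformly in $N$ is what I expect to be the main technical difficulty; once that is in place, standard exponential-moment bounds (Fernique/hypercontractivity) yield the $N$-uniform Radon--Nikodym estimate and the PTV limit concludes the proof.
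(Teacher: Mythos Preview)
Your high-level architecture is correct and matches the paper: Galerkin truncation, Poincar\'e--Dulac normal form producing a modified energy $E_{s,N}=\tfrac12\|u\|_{H^s}^2+R_{s,N}$, a weighted Gaussian measure with density $e^{-R_{s,N}}$, uniform-in-$N$ $L^p$ control of $Q_{s,N}=\frac{d}{dt}E_{s,N}(\Phi_N(t)u)\big|_{t=0}$, and the PTV soft limit. However, you have misidentified both the \emph{location} and the \emph{mechanism} of the key cancellation, and this is the heart of the argument.

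First, the worst pairing does not live in the resonant piece. After the normal form, $Q_{s,N}$ splits into a resonant sextic term $\mathcal R_0$ (where $|\Omega(\vec k)|\lesssim \lambda(\vec k)^{\delta_0}$) and two \emph{non-resonant second-generation} decilinear terms $\mathcal R_1,\mathcal R_2$, obtained when the time derivative in $\partial_t R_N$ hits one of the six factors and the equation substitutes the quintic nonlinearity. The singular pairings $\mathcal S_{i,j}$ sit inside $\mathcal R_1,\mathcal R_2$, not in $\mathcal R_0$; the latter is handled directly by Wiener chaos plus deterministic counting. Second, the cancellation is \emph{not} that an extra $\nabla R_N$-contribution from the weight annihilates a bad term from the bare divergence. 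Both $\mathcal R_1$ and $\mathcal R_2$ already come from $\partial_t R_N$; there is no further weight term to subtract. The actual mechanism is twofold: (i) the leading part of each $\mathcal S_{i,j}$ factorises, via the approximation $\psi_{2s}(\vec k)/\Omega(\vec k)\approx (|k_1|^{2s}-|k_2|^{2s})/(|k_1|^2-|k_2|^2)$, into a \emph{real-valued} expression (a sum of squared inner sums), so that taking $\Im$ kills it; (ii) this alone is insufficient, and one must further exploit the symmetry $\mathcal S_{2,1}=\mathcal S_{1,1}$ with $k_1\leftrightarrow k_2$ (respectively $\mathcal S_{2,2}=\overline{\mathcal S_{1,2}}$), so that in $\Im(\mathcal S_{1,1}-\mathcal S_{2,1})$ the coefficient becomes $\chi_N(k_1)^2|w_{k_2}|^2-\chi_N(k_2)^2|w_{k_1}|^2$, which after recentering $|w_k|^2\mapsto|w_k|^2-\mathbb E|w_k|^2$ gains an extra order in Wiener chaos. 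For $j=2$ one instead uses that $|\Omega(\vec k)|\sim|k_{(1)}|^2$ is large, giving deterministic smoothing. Without correctly locating these two cancellations you cannot close the estimate.

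Two secondary points. The paper uses a \emph{smooth} truncation $S_N$ rather than the sharp $\pi_N$; this matters for the energy-critical approximation theory (uniform $L^p$-boundedness of $S_N$ is needed to pass to the limit in the quintic term). And the closing step is not generic Fernique: one needs the $L^p(\mu_s)$ bound on $Q_{s,N}$ to grow like $p^{\beta}$ with $\beta<1$, so that optimising $p\sim\log(1/\mu_s(A))$ in the Yudovich-type differential inequality yields $\mu_s(\Phi_N(t)A)\lesssim \mu_s(A)^{1-\epsilon_0}$. The sub-linear exponent $\beta$ is produced precisely by the fact that the conditional Wiener chaos is applied only in the two highest-frequency variables after the above cancellations isolate them.
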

In the statement above, the notation $\mu\ll \nu$ for two measures $\mu,\nu$ 
%temporarily 
means that $\mu$ is absolutely continuous with respect to $\nu$.

In the proof of Theorem \ref{thm:main} below, we retain us of using arithmetic arguments such as the divisor bound. Therefore the result of  Theorem \ref{thm:main} remains valid for irrational tori with essentially the same proof. 

In view of \cite{aizen}, it seems hopeless to construct a Gibbs measure for \eqref{NLS} (and any other energy critical problem). This gives a further motivation for studying quasi-invariant Gaussian measures for \eqref{NLS} or any other model for which the Gibbs measure construction fails. 

The result of  Theorem \ref{thm:main} remains true (with a simpler proof) for the cubic $3d$ NLS
$$
i\partial_tu +\Delta u=|u|^{2}u,\quad (t,x) \in \mathbb{R}\times\T^3,
$$
and also for the $2d$ NLS with an arbitrary polynomial defocusing nonlinearity.

As already mentioned,  Theorem \ref{thm:main} only gives qualitative quasi-invariance. It would be interesting to obtain quantitative bounds on the resulting Radon-Nikodym derivatives. Such quantitative bounds were obtained in some previous works on the subject, the most notable being the paper by Forlano-Tolomeo \cite{forl_tol} where such quantitative informations on the Radon-Nikodym derivative are used in order to perform the Bourgain globalization argument, i.e. quasi-invariance is used in order to construct the flow. The  Forlano-Tolomeo argument is performed for a $1d$ model and it would be very interesting to extend it to higher dimensions. In particular, it would be interesting to decide whether  Theorem \ref{thm:main} holds in the supercritical regime, i.e. for some $s<\frac 5 2$ (in this regime the existence of the flow should rely on a probabilistic well-posedness in the spirit of \cite{NaSta}). But at this stage it is even not clear to us how to prove   Theorem \ref{thm:main} in the natural subcritical range  $s>\frac 5 2$. By using the dispersive effects we can relax slightly the assumption $s\geq 10$ but it would still be away for the natural subcritical assumption  $s>\frac 5 2$. In summary, much more remains to be understood concerning the transport of $\mu_s$ under the NLS flow and its connection with the probabilistic well-posedness theory. 
%%%%%%%%%%%%%%%%%%%%%%%%%%%%%%%%%%%%%%%%%%%%%%%%%%%%%%%%%%%%%%%%%%		
\subsection{Applications} In this section we present two simple corollaries of Theorem~\ref{thm:main}. Recall that the random field \eqref{randominitial} is a stationary Gaussian process on $\T^2$. In particular, for each fixed $x\in\T^2$, $\phi^\omega(x)$ is a complex Gaussian random variable with law $\mathcal{N}_{\mathbb{C}}(0,\sigma^2)$, where
$
\sigma^2=\sum_{k\in\Z^3}\frac{1}{1+|k|^{2s}}.
%<\infty. 
$
Consequently, the probability density of $\phi^{\omega}(x)$ is $\frac{1}{2\pi \sigma}\e^{-\frac{|y|^2}{2\sigma}}\dd y$ on $\mathbb{C}=\R^2$. In particular, the law of $\phi^{\omega}(x)$ is absolutely continuous with respect to the Lebesgue measure. A natural question is to study the regularity of the law for the random variable $u(t,x)$, evolved by \eqref{NLS} with the initial data $\phi^{\omega}$.  This type of problems has been intensively studied in the field of Stochastic analysis. For many classes of stochastic (partial) differential equations, the regularity of laws of solutions can be obtained via the  Malliavin Calculus (see the book of Nualart \cite{Nualart} and references therein). The Malliavin Calculus was originally developed by P.~Malliavin \cite{Malliavin} to bring a new proof of H\"ormander's theorem for hypoelliptic operators. We do not intend to include any element of the Malliavin Calculus in this article, but to give a simple application of the quasi-invariance property to obtain the absolutely continuity for the law of solutions of NLS with random initial data, which can be viewed as a pointwise version of the quasi-invariance property of the NLS equation displayed by Theorem~\ref{thm:main}.
\begin{cor}\label{cor1}
Assume that $s\geq 10$ and fix  $(t_0,x_0) \in \mathbb{R}\times\T^3$. Let $u(t,x,\omega)$ be the solution of \eqref{NLS} with data \eqref{randominitial}.
Then the law of the complex random variable $\omega\mapsto u(t_0,x_0,\omega)$ has a density with respect to the Lebesgue measure on $\C$. 
\end{cor}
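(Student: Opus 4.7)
My plan is to derive the corollary from \thmref{thm:main} via a pushforward argument, using two elementary ingredients beyond the theorem itself: the fact that absolute continuity is preserved under pushforwards, and the (already recalled) fact that the one-point marginal of $\mu_s$ is absolutely continuous on $\C$. First I would observe that, since $s\geq 10$, one has $s-\tfrac32>\tfrac32$, so by Sobolev embedding typical elements of $\supp(\mu_s)$ are continuous on $\T^3$ and the pointwise evaluation $E_{x_0}\colon \phi\mapsto \phi(x_0)$ is a continuous, hence Borel measurable, linear map from $H^\sigma(\T^3)$ (for some $\sigma\in(3/2,s-3/2)$) to $\C$. By the global well-posedness theory of Ionescu--Pausader, $\Phi(t_0)$ is defined $\mu_s$-a.e., and the law of the random variable in question is, by definition,
\[
\mathrm{Law}(u(t_0,x_0,\cdot))=(E_{x_0})_{*}\bigl((\Phi(t_0))_{*}\mu_s\bigr).
\]

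The first step of the argument is the elementary measure-theoretic fact that pushforwards preserve absolute continuity: if $\nu\ll\mu$ and $f$ is measurable, then $f_{*}\nu\ll f_{*}\mu$ (since $(f_{*}\mu)(A)=0$ forces $\mu(f^{-1}A)=0$ and hence $\nu(f^{-1}A)=0$). Applied with $\nu=(\Phi(t_0))_{*}\mu_s$, $\mu=\mu_s$, $f=E_{x_0}$, and combined with the quasi-invariance statement $(\Phi(t_0))_{*}\mu_s\ll \mu_s$ of \thmref{thm:main}, this yields
\[
(E_{x_0})_{*}\bigl((\Phi(t_0))_{*}\mu_s\bigr)\;\ll\;(E_{x_0})_{*}\mu_s.
\]

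The second step is to identify the right-hand side. As recalled in the paragraph preceding the corollary, the Gaussian field $\phi^{\omega}$ is stationary, and the marginal law of $\phi^{\omega}(x_0)$ is the complex Gaussian $\mathcal{N}_{\C}(0,\sigma^2)$ with $\sigma^2=\sum_{k\in\Z^3}(1+|k|^{2s})^{-1}<\infty$, which has the explicit density $\frac{1}{2\pi\sigma}\,\e^{-|y|^2/(2\sigma)}$ with respect to Lebesgue measure on $\C\simeq\R^2$. Hence $(E_{x_0})_{*}\mu_s$ is absolutely continuous with respect to Lebesgue measure, and by transitivity of absolute continuity the same holds for $\mathrm{Law}(u(t_0,x_0,\cdot))$, which is the conclusion.

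There is no serious obstacle: the entire analytic content is absorbed into the invocation of \thmref{thm:main}, and the remaining work is the functorial check that pushforward transports $\ll$. The only hypothesis to verify is that $E_{x_0}$ makes sense $\mu_s$-almost surely, which follows from $s\geq 10$ (indeed any $s>3/2$ would suffice for this step alone). In particular, the same argument would yield the absolute continuity of the joint law of $(u(t_0,x_1,\cdot),\ldots,u(t_0,x_N,\cdot))$ as soon as one knows that the corresponding finite-dimensional marginal of $\mu_s$ is non-degenerate, which is the case since the covariance of $\phi^\omega$ is a positive-definite kernel.
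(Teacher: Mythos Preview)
Your proof is correct and follows exactly the approach sketched in the paper: decompose the map as the evaluation $E_{x_0}$ composed with $\Phi(t_0)$, use Theorem~\ref{thm:main} to get $(\Phi(t_0))_*\mu_s\ll\mu_s$, push forward by $E_{x_0}$, and identify $(E_{x_0})_*\mu_s$ as the complex Gaussian $\mathcal{N}_\C(0,\sigma^2)$. You have simply made explicit the elementary measure-theoretic fact that pushforwards preserve absolute continuity and the Sobolev-embedding justification that $E_{x_0}$ is well defined, both of which the paper leaves implicit.
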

In order to prove Corollary~\ref{cor1}, we observe that we need to study the composition of $\Phi(t)$ and the evaluation map $u\mapsto u(t_0,x_0)$. Then it suffices to apply 
Theorem~\ref{thm:main} for $\Phi(t)$ and the observation before the statement of Corollary~\ref{cor1} for the evaluation map. It is likely that the Malliavin Calculus can be useful to get regularity properties of the densities appearing in the statement of Corollary~\ref{cor1}. In Corollary~\ref{cor1}, one may replace the evaluation map by other finite dimensional projections.
For instance, one may show that for every $k\in \Z^3$, the law of the Fourier coefficient $\widehat{u}(t,k,\omega)$ has a density with respect to  the Lebesgue measure on $\C$. 

Let us also emphasize that the Malliavin Calculus methods can be applied to prove quasi-invariance for maps from infinite dimensional gaussian spaces to finite dimensional spaces, while in 
Theorem~\ref{thm:main} we deal with the more complex situation of a map from an infinite dimensional gaussian space to itself.

Another simple consequence of Theorem~\ref{thm:main} is the following $L^1$-stability result.
\begin{cor}\label{stabi} 
Assume that $s\geq 10$. Let $f_1,f_2\in L^1(\dd\mu_s)$ and $\Phi(t)$ the flow of \eqref{NLS}. Then for any $t\in\R$, the transports of measures $f_1(u)\dd\mu_s(u)$, $f_2(u)\dd\mu_s(u)$ by $\Phi(t)$ are given by $F_1(t,u)\dd\mu_s(u)$ and $F_2(t,u)\dd\mu_s(u)$  respectively, for suitable $F_1(t,\cdot),F_2(t,\cdot)\in L^1(\dd\mu_s)$. Moreover,
		$$ \|F_1(t,\cdot)-F_2(t,\cdot)\|_{L^1(\dd\mu_s)}=\|f_1-f_2\|_{L^1(\dd\mu_s)}.
		$$
	\end{cor}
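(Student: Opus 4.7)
The plan is to deduce Corollary~\ref{stabi} directly from Theorem~\ref{thm:main}: the densities $F_i(t,\cdot)$ are produced from the Radon--Nikodym derivative of $(\Phi(t))_*\mu_s$ with respect to $\mu_s$, and the $L^1$ isometry follows from unwinding a single pushforward change of variables along the flow.

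First, Theorem~\ref{thm:main} gives $(\Phi(t))_*\mu_s \ll \mu_s \ll (\Phi(t))_*\mu_s$, so the density
$$
\rho_t(u) := \frac{d(\Phi(t))_*\mu_s}{d\mu_s}(u)
$$
exists, lies in $L^1(d\mu_s)$, and is strictly positive $\mu_s$-a.e. Since $s \geq 10 > \tfrac52$, the Ionescu--Pausader flow $\Phi(t)$ is globally defined on a full $\mu_s$-measure subset of $\supp(\mu_s)$, with measurable inverse $\Phi(-t)$. For any Borel set $A$, the definition of the pushforward followed by the change of variables $v = \Phi(t) u$ yields
$$
(\Phi(t))_*(f_i \, d\mu_s)(A) = \int_{\Phi(-t) A} f_i(u)\, d\mu_s(u) = \int_A f_i(\Phi(-t) v)\, \rho_t(v)\, d\mu_s(v),
$$
which identifies the candidate density
$$
F_i(t,u) := f_i(\Phi(-t) u)\, \rho_t(u).
$$

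To verify $F_i(t,\cdot) \in L^1(d\mu_s)$ and the claimed identity in one stroke, I would compute, for any $f_1,f_2 \in L^1(d\mu_s)$,
$$
\|F_1(t,\cdot) - F_2(t,\cdot)\|_{L^1(d\mu_s)} = \int \bigl|f_1(\Phi(-t) u) - f_2(\Phi(-t) u)\bigr|\, \rho_t(u)\, d\mu_s(u).
$$
Rewriting $\rho_t\, d\mu_s = d((\Phi(t))_*\mu_s)$ and unwinding the pushforward via $u = \Phi(t) v$ collapses the right-hand side to
$$
\int \bigl|f_1(v) - f_2(v)\bigr|\, d\mu_s(v) = \|f_1 - f_2\|_{L^1(d\mu_s)}.
$$
Specializing to $f_2 \equiv 0$ also shows $\|F_i(t,\cdot)\|_{L^1(d\mu_s)} = \|f_i\|_{L^1(d\mu_s)}$, hence each $F_i(t,\cdot)$ indeed lies in $L^1(d\mu_s)$.

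There is no substantive obstacle once Theorem~\ref{thm:main} is granted; the only point deserving attention is ensuring that $\Phi(t)$ is a measurable bijection between full $\mu_s$-measure sets, which follows from the Ionescu--Pausader global well-posedness on $\supp(\mu_s)$ together with the group law $\Phi(-t)\circ\Phi(t) = \mathrm{Id}$. The content of the identity $\|F_1 - F_2\|_{L^1} = \|f_1 - f_2\|_{L^1}$ is thus simply that transport of absolutely continuous measures along a bijective flow preserves the $L^1$-masses of their differences, so all of the genuine analytic work is absorbed into the quasi-invariance theorem.
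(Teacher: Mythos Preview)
Your argument is correct. You construct the densities explicitly as $F_i(t,u)=f_i(\Phi(-t)u)\rho_t(u)$ with $\rho_t=d(\Phi(t))_*\mu_s/d\mu_s$, and then the $L^1$ identity is a single change-of-variables computation.

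The paper mentions two routes. The first, attributed to \cite{STz1}, is essentially the explicit computation you carried out. The second, described as ``more direct'', avoids writing down $\rho_t$ or $F_i$ at all: it observes that pushforward by any measurable map is a contraction in total variation, so $\|F_1(t,\cdot)-F_2(t,\cdot)\|_{L^1(d\mu_s)}\leq\|f_1-f_2\|_{L^1(d\mu_s)}$ automatically, and then the reverse inequality comes from applying the same contraction to $\Phi(-t)$. Your approach has the advantage of producing the explicit density formula along the way; the paper's total-variation argument is slightly softer in that it never needs to name $\rho_t$ or invoke Radon--Nikodym, only measurability of $\Phi(t)$ and the group property of the flow.
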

One may prove Corollary \ref{stabi}  by performing the computations from	 \cite{STz1}. A more direct proof can be given by observing that $\Phi(t)$ is a measurable map and therefore the total variation distance between $F_1(t,u)\dd\mu_s(u)$  and $F_2(t,u)\dd\mu_s(u)$ is smaller than the total variation distance between   $f_1(u)\dd\mu_s(u)$  and $f_2(u)\dd\mu_s(u)$. This implies that 
$$ 
\|F_1(t,\cdot)-F_2(t,\cdot)\|_{L^1(\dd\mu_s)}\leq \|f_1-f_2\|_{L^1(\dd\mu_s)}.
$$
Using the reversibility of the NLS flow we get the inverse inequality. 

The remaining part of this paper is devoted to the proof of Theorem~\ref{thm:main}. 
In Section~\ref{Sec:modifiedenergy} we perform the normal form reduction, we define accordingly suitable weighted Gaussian measures and we state the key energy estimates.
In Section~\ref{proof:quasi} we perform the soft analysis leading from the energy estimates to the quasi-invariance result stated in  Theorem~\ref{thm:main}. 
In Section~\ref{prelim} we introduce our basic counting tool and the Wiener chaos estimate useful for our purposes.  
In Section~\ref{graficd} we decompose the divergence of the Hamiltonian vector field with respect to the weighted Gaussian measures in several pieces according to the possible pairings. 
In Section~\ref{energyI} we estimates the contributions of the first generation. 
Section~\ref{Section:singular} deals with the most singular contribution resulting from pairings between different generations. 
This is the most delicate part of our analysis containing the remarkable algebraic cancellations mentioned above. 
In Section~\ref{Section:rest} we treat the remainder terms in which the singular pairings are not presented. 
Finally in an Appendix we prove some approximation results for \eqref{NLS}, crucially exploited in Section~\ref{proof:quasi}. Let us emphasize that because of the critical nature of the Cauchy problem for \eqref{NLS}, the approximation argument is much more delicate compared to the previous literature on quasi-invariant Gaussian measures for Hamiltonian PDE's. 
%%%%%%%%
\subsection*{Acknowledgments} This work is partially supported by the ANR project Smooth ANR-22-CE40-0017.
The authors would like to thank the anonymous reviewer for his/her careful reading and  constructive comments. The authors would like to thank Alexis Knezevitch for pointing out an error in a previous version of the manuscript.
%%%%%%%	%%%%%%%%%%%%%%%%%%%%%%%%%%%%%%%%%%%%%%%%%%%%%%%%%%%%%%%%%	
%%%%%%%%%%%%%%%%%%%%%%%%%%%%%%%%%%%%%%%%%%%%%%%%%%%%%%%%%%%%%%%%%%%%%%%%%%%%%%%
\section{Modified energy and the weighted Gaussian measure}\label{Sec:modifiedenergy}
\subsection{An approximated system}
Fix a radial cutoff function $\chi\in C_c^{\infty}(\R^3)$ such that $\chi\equiv 1$ on $[-\frac{1}{2},\frac{1}{2}]^3$ and supp$(\chi)\subset \{|x|< 1\}$. For $N\in \N$, set $\chi_N(\cdot):=\chi(N^{-1}\cdot)$ and $S_N=\chi_N(\sqrt{-\Delta})$ the smooth frequency truncation and $\Pi_N=\mathbf{1}_{\sqrt{-\Delta}\leq N}$ the sharp frequency truncation. By definition,
$$ S_N\Pi_N=\Pi_NS_N=S_N,\quad S_N^*=S_N. $$
The advantage of using the operator $S_N$ is that $S_N$ is uniformly bounded on $L^p(\T^3)$ for $1<p<\infty$, which is crucial when taking the limit of the approximated system in the energy critical case.
Similar to the situation in \cite{BTT}, we consider the following smoothly approximated NLS equation
\begin{align}\label{NLS:SN}
	\begin{cases}
		&i\partial_tu_N+\Delta u_N=S_N(|S_Nu_N|^4S_Nu_N),\\
		&u_N|_{t=0}=u_0\in H^{\sigma}(\T^3).
	\end{cases}
\end{align}
As in \cite{BTT}, the solution of \eqref{NLS:SN} can be decomposed as two components on $\mathcal{E}_N:=\Pi_NL^2(\T^3)$ and $\mathcal{E}_N^{\perp}:=(\mathrm{Id}-\Pi_N)L^2(\T^3)$. 
This naturally leads to a splitting of $\mu_s$ as $\dd\mu_s=d\mu_{s,N}\otimes d\mu_{s,N}^{\perp}$ for every $N\in\N$, where  $\mu_{s,N}$ is a measure on $\mathcal{E}_N$ while  $\mu_{s,N}^{\perp}$  is a measure on  $\mathcal{E}_N^{\perp}$. 
The finite-dimensional part of \eqref{NLS:SN} on $\mathcal{E}_N$ is a Hamiltonian system (see \cite [Lemma~8.1] {BTT}), while the infinite-dimensional part is the linear evolution $\e^{it\Delta}$.
Thanks to the Cauchy-Lipchitz  theorem and the defocusing nature, the solution of \eqref{NLS:SN} is global and we denote by $\Phi_N(t)$ its flow, which can be factorized as $(\widetilde{\Phi}_N(t),\e^{it\Delta})$ on $\mathcal{E}_N\times \mathcal{E}_N^{\perp}$, where $\widetilde{\Phi}_N(t)$ is the restriction of $\Phi_N(t)$ on the finite-dimensional space $\mathcal{E}_N$, which is a Hamiltonian flow on $\mathcal{E}_N$.
By convention, we denote $\Phi(t)$ by $\Phi_\infty(t)$. 
%%%%%%%%%%%%%%%%%%%%%%%%%%%%%%%%%%%%%%%%%%%%%%%
\subsection{Poincar\'e-Dulac normal form and the modified energy}
To construct suitable weighted measures for our study, we must identify a modified energy functional.  Consider a smooth solution $u_N(t)$ of \eqref{NLS:SN}. We introduce a new unknown:
\[
v(t) = \e^{-it\Delta}u_N(t).
\]
Expanding $v(t)$ in the Fourier series, we have:
\[
v(t,x) = \sum_{k \in \Z^3} v_k(t)\mathrm{e}^{ik\cdot x},
\]
from which it follows that $v_k(t)$ satisfies the equation:
\begin{align}\label{NLSk}
	i\partial_tv_k(t) = \chi_N(k)\sum_{k_1 - k_2 + k_3 - k_4 + k_{5} = k  }\e^{-it\Omega(\vec{k})}\cdot \left(\prod_{j=1}^5\chi_N(k_j)\right)\cdot v_{k_1}(t)\ov{v}_{k_2}(t)\cdots v_{k_{5}}(t),
\end{align}
where
\[
\Omega(\vec{k}) = \sum_{j=1}^{5}(-1)^{j-1}|k_j|^2 - |k|^2
\]
is the resonant function.

To construct the modified energy, it is more convenient to use an equivalent of the Sobolev norm for $s\geq 0$:
\begin{align}\label{def:Hsequiv}
\vertiii{f}_{H^s(\T^3)}^2:=\sum_{k\in\Z^3}(1+|k|^{2s})|\widehat{f}(k)|^2. %% New command for the three lines norms
\end{align}
A simple computation using symmetry of indices yields
\begin{align}\label{deriveeHs}
	\frac{1}{2}\frac{d}{dt}\vertiii{v(t)}_{H^s}^2
	=&-\frac{1}{6}\Im\sum_{k_1-k_2+\cdots -k_{6}=0  } \psi_{2s}(\vec{k})\e^{-it\Omega(\vec{k})}
	\Big(\prod_{j=1}^6\chi_{N}(k_j)\Big)
	v_{k_1}\ov{v}_{k_2}\cdots \ov{v}_{k_{6}},
\end{align}
where in the above expression, we abuse the notation slightly and denote
\[
\psi_{2s}(\vec{k})=\sum_{j=1}^{6}(-1)^{j-1}|k_j|^{2s},\quad 
\Omega(\vec{k})=\sum_{j=1}^{6}(-1)^{j-1}|k_j|^2.
\]
The basic estimate for $\psi_{2s}(\vec{k})$ is 
$$
|\psi_{2s}(\vec{k})|\lesssim  |k_{(1)}|^{2s-2}(|k_{(3)}|^2+|\Omega(\vec{k})|),
$$
where $|k_{(1)}|\geq |k_{(2)}|\geq \cdots \geq |k_{(6)}|$ is rearrangement of $k_1,\cdots,k_6$ and $k_1-k_2+\cdots -k_{6}=0$ (see Lemma \ref{monto_gordo} below).
Note that each $v_{k_j}$ will be accompanied with $\chi_N(k_j)$, and the capital $N$ plays no role in our analysis, we will simply write 
$ w_{k_j}:=\chi_N(k_j)v_{k_j}
$ in the sequel. Note that
\begin{align}\label{eq:wk} i\partial_tw_k=\chi_N(k)^2\sum_{k_1-k_2+k_3-k_4+k_5=k}\e^{-it\Omega(\vec{k})}\cdot w_{k_1}\ov{w}_{k_2}\cdots w_{k_5}.
\end{align}

In order to truncate the level set of the resonant function, we further introduce the symmetric factor
\[
\lambda(\vec{k})=\Big(\sum_{j=1}^6|k_j|^2\Big)^{\frac{1}{2}}.
\]

As the resonant function $\Omega(\vec{k})$ takes integer values\footnote{This fact is not essential for our result and the proof. Though we keep to work on the rational torus for convenience. }, we will decompose the set of indices $k_1,\cdots,k_{6}$ according to the level set of $\Omega(\vec{k})$. In order to perform the differentiations by parts in time, we further write 
	\begin{align}
		\frac{1}{2}\frac{d}{dt}\vertiii{v(t)}_{H^s}^2=&-\frac{1}{6}\Im\sum_{k_1-k_2+\cdots-k_{6}=0  }\chi\Big(\frac{\Omega(\vec{k})}{\lambda(\vec{k})^{\delta_0}}\Big)\psi_{2s}(\vec{k})\e^{-it\Omega(\vec{k})}w_{k_1}\ov{w}_{k_2}\cdots \ov{w}_{k_{6}}\notag \\
		&-\frac{1}{6}\Im\sum_{k_1-k_2+\cdots-k_{6}=0
		}\Big(1-\chi\Big(\frac{\Omega(\vec{k})}{\lambda(\vec{k})^{\delta_0}}\Big)\Big)\frac{\psi_{2s}(\vec{k}) }{-i\Omega(\vec{k})}\partial_t\Big(\e^{-it\Omega(\vec{k})}w_{k_1}\ov{w}_{k_2}\cdots\ov{w}_{k_{6}}\Big)\notag\\
		&+\frac{1}{6}\Im\sum_{k_1-k_2+\cdots-k_{6}=0
		}\Big(1-\chi\Big(\frac{\Omega(\vec{k})}{\lambda(\vec{k})^{\delta_0}}\Big)\Big)\frac{\psi_{2s}(\vec{k}) }{-i\Omega(\vec{k})}\e^{-it\Omega(\vec{k})}\partial_t(w_{k_1}\ov{w}_{k_2}\cdots\ov{w}_{k_{6}}),
		 \label{deriveeHs1}
	\end{align}
	where $0<\delta_0<\frac{2}{3}$ is close to $\frac{2}{3}$ (here we denote by $\chi$ a standard bump function from $\R$ to $\R$).
	Motivated by the above formula, we define the modified energy (with $w=\chi_N(\sqrt{-\Delta})v$)
	\begin{equation}\label{modifiedenergy}  \mathcal{E}_{s,t}(v):=\frac{1}{2}\vertiii{v}_{H^s(\T^3)}^2+\mathcal{R}_{s,t}(w),
	\end{equation}
	where
	$$ \mathcal{R}_{s,t}(w):=\frac{1}{6}\Im \sum_{k_1-k_2+\cdots-k_{6}=0  }\Big(1-\chi\Big(\frac{\Omega(\vec{k})}{\lambda(\vec{k})^{\delta_0}}\Big)\Big) \frac{\psi_{2s}(\vec{k}) }{-i\Omega(\vec{k}) }\e^{-it\Omega(\vec{k})}w_{k_1}\ov{w}_{k_2}\cdots \ov{w}_{k_{6}}.
	$$
	Changing back to the variable $u$, the modified energy is 
	\begin{equation}\label{Est}
		\mathcal{E}_{s,t}(v)= E_{s,N}(u):=\frac{1}{2}\vertiii{u}_{H^s(\T^3)}^2+R_{s,N}(u),
	\end{equation}
	where
	\begin{equation}\label{Rst}
		R_{s,N}(u):=\frac{1}{6}\Im \sum_{k_1-k_2+\cdots-k_{6}=0}\Big(1-\chi\Big(\frac{\Omega(\vec{k})}{\lambda(\vec{k})^{\delta_0}}\Big)\Big) \frac{\psi_{2s}(\vec{k}) }{-i\Omega(\vec{k}) }\cdot
		\Big(\prod_{j=1}^6 \chi_N(k_j)
		\Big)\cdot 
		\widehat{u}_{k_1}\ov{\widehat{u}}_{k_2}\cdots \ov{\widehat{u}}_{k_{6}}\;.
	\end{equation}
	We define $R_{s}(u)$ as $R_{s,N}(u)$ without $\prod_{j=1}^6 \chi_N(k_j)$. Sometimes  $R_{s}(u)$ will be denoted by $R_{s,\infty}(u)$. We similarly define 
	$E_{s}(u)$ which may also be denoted by $E_{s,\infty}(u)$. 
	
	The modified energy \eqref{Est} will play a crucial role in our analysis. We refer to \cite{visciglia} for a survey on the use of modified energies in the analysis of dispersive PDE's.

Then from \eqref{deriveeHs1} and the equation  \eqref{NLSk} of $u_N(t)$, and symmetry of indices, we have (with $w_k=\chi_N(k)v_k$)
\begin{align}\label{dmodifiedenergy}
\frac{d}{dt}E_{s,N}(u_N(t))=	\frac{d}{dt}\mathcal{E}_{s,t}(v) 
	&= -\frac{1}{6}\Im\sum_{k_1-k_2+\cdots-k_{6}=0  } \chi\Big(\frac{\Omega(\vec{k})}{\lambda(\vec{k})^{\delta_0}}\Big) \psi_{2s}(\vec{k}) \e^{-it\Omega(\vec{k})} w_{k_1}\overline{w}_{k_2}\cdots \overline{w}_{k_{6}} \notag \\
	&+ \frac{1}{2}\Im\sum_{k_1-k_2+\cdots-k_{6}=0 }\Big(1-\chi\Big(\frac{\Omega(\vec{k})}{\lambda(\vec{k})^{\delta_0}}\Big)\Big)\frac{\psi_{2s}(\vec{k})}{\Omega(\vec{k})} \notag \\
	&\qquad \times \sum_{\substack{k_1=p_1-p_2+\cdots+p_{5} }} \e^{-it\big(\Omega(\vec{k})+\Omega(\vec{p})\big)}\chi_N(k_1)^2 w_{p_1}\overline{w}_{p_2}\cdots w_{p_{5}}\overline{w}_{k_2}\cdots \overline{w}_{k_{6}} \notag \\
	&- \frac{1}{2}\Im\sum_{k_1-k_2+\cdots -k_{6}=0}\Big(1-\chi\Big(\frac{\Omega(\vec{k})}{\lambda(\vec{k})^{\delta_0}}\Big)\Big)\frac{\psi_{2s}(\vec{k})}{\Omega(\vec{k})} \notag \\
	&\qquad \times \sum_{k_2=q_1-q_2+\cdots +q_{5}} \e^{-it\big(\Omega(\vec{k})-\Omega(\vec{q})\big)}\chi_N(k_2)^2 w_{k_1}\overline{w}_{q_1}\cdots \overline{w}_{q_{5}}w_{k_3}\cdots \overline{w}_{k_{6}},
\end{align}
where
\begin{align*}
	\Omega(\vec{p})=\sum_{j=1}^{5}(-1)^{j-1}|p_j|^2-|k_1|^2, \quad
	\Omega(\vec{q})=\sum_{j=1}^{5}(-1)^{j-1}|q_j|^2-|k_2|^2. 
\end{align*}
%%%%%%%%%%%%%%%%%%%%%%%%%%%%%%%%%%%%%%%%%%%%%%%%%%%%%%%%%%%%%%%%%%%%%%%%%%%%%%%%%%%%%%%%%%%%%%%%%%%%%%%%%
	
\subsection{The weighted measure}
Using the modified energy, we define the weighted Gaussian measure for given $R> 1$
\begin{align}\label{rhostr} 
	\dd\rho_{s,R,N}(u) =\chi_R(\|u\|_{H^{\sigma}})\cdot \e^{-R_{s,N}(u)} \dd\mu_{s,N}(u),\quad \dd\ov{\rho}_{s,R,N}(u):=\dd\rho_{s,R,N}\otimes \dd\mu_{s,N}^{\perp},
\end{align}
where the functional $R_{s,N}(u)$ is defined by \eqref{Rst} and $\chi_R(\cdot)=\chi(R^{-1}\cdot)$ is a cutoff.
\begin{prop}[Local existence of the weighted measure]\label{weightedmeasure}
Let $s \geq 10, R \geq 1$, 
$\sigma<s-\frac{3}{2}$, close to $s-\frac{3}{2}$
%\sigma\in\big(s-2,s-\frac{3}{2}\big), 
and $N \in \N$. Then for any $p \in [1,\infty)$, there exists a uniform in $N$ constant $C(p,s,R) > 0$, such that
\[
\big\| \chi_R(\|u\|_{H^{\sigma}}) \cdot \e^{|R_{s,N}(u)|}  \big\|_{L^p(\dd\mu_s)} \leq C(p,s,R).
\]
Moreover, for fixed $R > 0$,
\[
\lim_{N\rightarrow\infty}\big\| \chi_R(\|u\|_{H^{\sigma}}) \e^{-R_{s,N}(u)} - \chi_R(\|u\|_{H^{\sigma}})\e^{-R_{s}(u)}  \big\|_{L^p(\dd\mu_s)} = 0.
\]
\end{prop}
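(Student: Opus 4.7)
\emph{Strategy.} Both statements reduce to a precise analysis of the six-linear form $R_{s,N}(u)$ from \eqref{Rst} as a random variable on the Gaussian probability space $(\mu_s)$. The coefficient is bounded by
$$\Big|\frac{\psi_{2s}(\vec{k})}{\Omega(\vec{k})}\Big(1-\chi\big(\Omega(\vec k)/\lambda(\vec k)^{\delta_0}\big)\Big)\Big|\lesssim |k_{(1)}|^{2s-2}\Big(1+\frac{|k_{(3)}|^{2}}{\lambda(\vec k)^{\delta_0}}\Big),$$
so the non-resonant truncation produces a genuine smoothing factor of size $\delta_0$ close to $\tfrac{2}{3}$. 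This gain, together with the large regularity $s\geq 10$, is what will make the lattice sums below absolutely convergent.

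\emph{Uniform $L^p$ bound.} I would first establish $\|R_{s,N}\|_{L^{2}(\mu_s)}\leq C(s)$ uniformly in $N$, by expanding the second moment, applying Wick's theorem to pair the twelve Gaussian factors, and bounding each of the resulting lattice sums via the coefficient estimate above together with a counting estimate for the level sets $\{\vec k : \Omega(\vec k)=n\}$ of the kind to be introduced in Section~\ref{prelim}. Wiener chaos hypercontractivity (chaos of order $6$) then yields $\|R_{s,N}\|_{L^{p}(\mu_s)}\lesssim (p-1)^{3}$ for every $p\in[2,\infty)$, uniformly in $N$. To upgrade these polynomial moments to exponential integrability of $e^{|R_{s,N}|}$ I would invoke the cutoff $\chi_R$ through a low/high decomposition: splitting $R_{s,N}=R_{s,N}^{\leq K}+R_{s,N}^{>K}$ according to whether all $|k_j|\leq K$, the low-frequency piece is deterministically controlled by $C K^{A}R^{6}$ on the support of $\chi_R(\|u\|_{H^\sigma})$, while the $L^{p}(\mu_s)$ norm of the high-frequency piece decays like $K^{-c}$ thanks to the smoothing $\lambda^{-\delta_0}$ and the decay of the Gaussian variances $(1+|k|^{2s})^{-1}$ at high frequencies. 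Optimizing $K$ in the threshold of a Chebyshev argument then converts the polynomial bounds into a Gaussian tail estimate
$$\mu_s\big(\,|R_{s,N}(u)|>\lambda,\ \|u\|_{H^\sigma}\leq R\,\big)\leq C e^{-c\lambda},$$
from which the required $L^{p}$ bound on $\chi_R(\|u\|_{H^\sigma})\,e^{|R_{s,N}(u)|}$ follows by integrating the tail, with constants independent of $N$.

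\emph{Convergence and main obstacle.} For the second statement I would use the elementary inequality $|e^{-a}-e^{-b}|\leq |a-b|\,(e^{|a|}+e^{|b|})$ together with Hölder's inequality to reduce the claim to $\|R_{s,N}(u)-R_{s}(u)\|_{L^{q}(\mu_s)}\to 0$ for some $q<\infty$, the exponentials being handled uniformly in $N$ by the previous step. The difference $R_{s,N}-R_{s}$ is the same six-linear form but with multiplier $1-\prod_{j}\chi_{N}(k_{j})$, which forces at least one $|k_j|\gtrsim N$; repeating the second-moment analysis shows $\|R_{s,N}-R_{s}\|_{L^{2}(\mu_s)}\to 0$ by dominated convergence in the lattice sum, and chaos hypercontractivity upgrades this to $L^{q}$. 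The main obstacle throughout is the multilinear $L^{2}$ estimate underlying Step~2: one must juggle the non-resonant restriction $|\Omega|\gtrsim\lambda^{\delta_0}$, the growth $|k_{(1)}|^{2s-2}$, the momentum condition $k_{1}-\cdots-k_{6}=0$, and the Gaussian weights $(1+|k_{j}|^{2s})^{-1}$, and it is precisely the combination of $\delta_0$ close to $\tfrac{2}{3}$ with the generous regularity $s\geq 10$ that keeps the resulting sums finite independently of $N$.
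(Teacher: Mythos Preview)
Your Step 1 is where the argument breaks down. The claim $\|R_{s,N}\|_{L^{2}(\mu_s)}\leq C(s)$ uniformly in $N$ (with no cutoff) is not correct. In the diagonal Wick pairing one is led to
\[
\sum_{\vec k}\Big|\frac{\psi_{2s}(\vec k)}{\Omega(\vec k)}\Big|^{2}\prod_{j=1}^{6}\frac{\chi_N(k_j)^2}{1+|k_j|^{2s}},
\]
and on the generic set where $|k_{(1)}|\sim|k_{(2)}|\sim N_1$ and the remaining four frequencies are $O(1)$ one has $|\psi_{2s}/\Omega|\sim N_1^{2s-2}$, so each dyadic shell contributes $N_1^{4s-4}\cdot N_1^{-4s}\cdot N_1^{3}=N_1^{-1}$, and the sum over $N_1\lesssim N$ is $\sim\log N$. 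Thus $R_{s,N}$ is not in $L^{2}(\mu_s)$ uniformly in $N$, and for the same reason your high-frequency piece $R_{s,N}^{>K}$ does not satisfy $\|R_{s,N}^{>K}\|_{L^{p}(\mu_s)}\lesssim K^{-c}p^{3}$ uniformly in $N$; the Chebyshev/optimization step that was supposed to produce $e^{-c\lambda}$ collapses. Even if one could somehow rescue a uniform bound, hypercontractivity on the full sixth chaos only gives $p^{3}$, and your exponent bookkeeping would then require the high-frequency decay rate $c$ to exceed twice the low-frequency growth rate $A$, which you never check and which fails here.

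The paper proceeds quite differently. The cutoff $\mathbf{1}_{B_R^{H^{\sigma}}}$ is used from the very start, and the sum is split according to $|k_{(3)}|\gtrless |k_{(1)}|^{\alpha}$. The large-$|k_{(3)}|$ part is handled purely deterministically on $B_R^{H^{\sigma}}$. For the small-$|k_{(3)}|$ part, the four low-frequency factors are $\mathcal{B}_{\ll N_1}$-measurable and are bounded deterministically by $\|w\|_{H^{\sigma}}^{4}\leq R^{4}$; only the two high-frequency factors $w_{k_1},w_{k_2}$ remain random, so the \emph{conditional} Wiener chaos is of order two, giving growth $p$ (not $p^{3}$) together with a negative power $N_1^{-(1-\frac{3}{2}\delta_0)}$. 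Interpolating this against the crude $L^{\infty}$ bound produces $\|\mathbf{1}_{B_R^{H^{\sigma}}}R_{s,N}\|_{L^{p}(\mu_s)}\leq C(R)p^{\beta}$ with $\beta<1$ (this is Proposition~\ref{firstgeneration} applied to $\mathcal{R}$), and $p^{\beta}$ with $\beta<1$ immediately yields tails $e^{-c\lambda^{1/\beta}}$, hence exponential integrability. Your convergence argument in the last paragraph is fine in spirit once the uniform exponential bound is in place, but it inherits the gap from Step~1.
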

Recall that $\Phi_N(t)$ is the flow of \eqref{NLS:SN} while $\Phi_\infty(t)=\Phi(t)$ is the flow of \eqref{NLS}.  Another key proposition is the following weighted energy estimate:
\begin{prop}[Weighted energy estimate]\label{energyestimate}
Let $s\geq 10, R \geq 1$, 
%\sigma\in\big(s-2,s-\frac{3}{2}\big), 
$\sigma<s-\frac{3}{2}$, close to $s-\frac{3}{2}$ and $N\in\N\cup\{\infty\}$.  Set
$$ 
Q_{s,N}(u)=\frac{\dd}{\dd t}E_{s,N}(\Phi_N(t)u)|_{t=0}
$$
and denote by $B_R^{H^\sigma}$ the centered ball in $H^{\sigma}(\T^3)$ of radius $R$.  Then there exist constants $C(s,R)>0$ and 
$\beta\in(0,1)$, such that for all $p\in[2,\infty)$ and  $N\in \N\cup\{\infty\}$,
$$
 \|\mathbf{1}_{B_R^{H^{\sigma}}}(u)\cdot Q_{s,N}(u) \|_{L^p(\dd\mu_s)}\leq C(s,R )p^{\beta}.
 $$
 Thanks to Proposition~\ref{weightedmeasure}, we have also for all $N\in\N\cup\{\infty\}, p\in[1,\infty)$,
 $$  
 \|\mathbf{1}_{B_R^{H^{\sigma} }}(u)\cdot Q_{s,N}(u) \|_{L^p(\dd\ov{\rho}_{s,R,N})}\leq C(s,R )p^{\beta}.
$$
\end{prop}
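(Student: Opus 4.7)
\textbf{Plan of proof of Proposition \ref{energyestimate}.}
The starting point is the explicit formula \eqref{dmodifiedenergy}, which realizes $Q_{s,N}(u)$ as the sum of a single $6$-linear ``first-generation'' term (the resonant contribution, cut off by $\chi(\Omega/\lambda^{\delta_0})$), and two $10$-linear ``second-generation'' terms produced by inserting the equation \eqref{eq:wk} in place of $\partial_tw_{k_1}$ and $\partial_t\overline{w}_{k_2}$ in the non-resonant pieces of \eqref{deriveeHs1}. Substituting the random series \eqref{randominitial} turns $Q_{s,N}(u)$ into a polynomial of degree at most $10$ in the i.i.d.\ complex Gaussians $(g_k)$, which I would expand into its homogeneous Wiener chaos components by enumerating all possible pairings of Fourier indices; this combinatorial bookkeeping is the content of Section \ref{graficd}. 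The proposition will follow once each chaos component is shown to satisfy a uniform-in-$N$ $L^2(d\mu_s)$ bound on $B_R^{H^\sigma}$ (with at most polynomial dependence on $R$), since the $L^p(d\mu_s)$ bound with growth $p^\beta$, where $\beta$ is half the maximum surviving chaos degree, is then furnished by Wiener chaos hypercontractivity (recorded in Section \ref{prelim}).

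For the first generation, the cutoff forces $|\Omega(\vec{k})|\lesssim \lambda(\vec{k})^{\delta_0}$ with $\delta_0<\tfrac{2}{3}$. Combined with the multiplier estimate $|\psi_{2s}(\vec{k})|\lesssim |k_{(1)}|^{2s-2}(|k_{(3)}|^2+|\Omega(\vec{k})|)$ of Lemma \ref{monto_gordo} and the counting tool of Section \ref{prelim}, I would enumerate solutions of $k_1-k_2+\cdots-k_6=0$ with controlled resonance and bound each chaos component directly (this is carried out in Section \ref{energyI}). For the second generation, the factor $\psi_{2s}(\vec{k})/\Omega(\vec{k})$ is essentially $|k_{(1)}|^{2s-2}/|\Omega(\vec{k})|$ on the support of $1-\chi(\Omega/\lambda^{\delta_0})$, and the analysis proceeds by a case split on which indices in the inner $5$-tuples $\vec{p}$, $\vec{q}$ pair with which indices among $\{k_2,\ldots,k_6\}$.

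The heart of the argument, and the main obstacle, is the treatment of the ``singular'' inter-generational pairings in Section \ref{Section:singular}. These are the configurations in which an inner index pairs with an outer index in such a way that the highest of the ten frequencies is essentially unconstrained relative to $\lambda(\vec{k})$, so that the smoothing gain $|\Omega|^{-1}$ is too weak to close the estimate on its own. I would line up the singular pairings arising from the substitution of $\partial_tw_{k_1}$ against those arising from the substitution of $\partial_t\overline{w}_{k_2}$; because these two substitutions enter \eqref{dmodifiedenergy} with opposite signs dictated by the Hamiltonian structure, the worst terms cancel identically, and one is left with a residual expression of strictly lower effective chaos degree that still carries the full $|\Omega|^{-1}$ gain. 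The remaining, non-singular pairings are treated in Section \ref{Section:rest} by direct counting combined with the $|\Omega|^{-1}\lesssim \lambda^{-\delta_0}$ smoothing.

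Once uniform $L^2(d\mu_s)$ bounds on $B_R^{H^\sigma}$ are available for every surviving chaos component, Wiener chaos hypercontractivity yields the claimed $L^p(d\mu_s)$ bound with the exponent $\beta\in(0,1)$, which is possible precisely because the cancellation in Section \ref{Section:singular} keeps the effective surviving degree low. The passage from the first inequality to the second (replacing $d\mu_s$ by $d\overline{\rho}_{s,R,N}$) is then a soft step: writing $d\overline{\rho}_{s,R,N}/d\mu_s=\chi_R(\|u\|_{H^\sigma})e^{-R_{s,N}(u)}$, Hölder's inequality with exponents $(2,2)$ (applied to $|Q_{s,N}|^p$ and the density) reduces the task to bounding $\|Q_{s,N}\|_{L^{2p}(d\mu_s)}$ together with the $L^2(d\mu_s)$ norm of the density, which is uniform in $N$ by Proposition \ref{weightedmeasure}; this only enlarges the constant $C(s,R)$ and multiplies $p^\beta$ by a harmless factor.
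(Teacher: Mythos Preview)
Your overall roadmap (decompose via \eqref{dmodifiedenergy}, treat the first generation with Lemma~\ref{monto_gordo} plus counting, split the second generation by pairings, and pass to $\overline{\rho}_{s,R,N}$ by H\"older against Proposition~\ref{weightedmeasure}) matches the paper's structure. However, two of your key mechanisms are misidentified, and as stated the argument would not close.

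First, the cancellation in Section~\ref{Section:singular} is \emph{not} that the $\partial_t w_{k_1}$ and $\partial_t\overline{w}_{k_2}$ contributions cancel by opposite sign. In fact for the pairing type $j=2$ one has $\mathcal{S}_{2,2}(w)=\overline{\mathcal{S}_{1,2}(w)}$, so $\Im(\mathcal{S}_{1,2}-\mathcal{S}_{2,2})=2\Im\mathcal{S}_{1,2}$ and nothing cancels between the two generations. The actual mechanism is different: one approximates the weight $\frac{\psi_{2s}(\vec{k})}{\Omega(\vec{k})}(1-\chi)$ by a decoupled weight depending only on the two largest frequencies, after which the inner sums over $(k_3,\ldots,k_6)$ and $(p_2,\ldots,p_5)$ factor as a modulus squared, so the leading term is \emph{real} and vanishes under $\Im$. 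What survives carries the improved multiplier $\Psi(\vec{k})$ of Lemma~\ref{cancellationerror} (or Lemma~\ref{cancellationerror'}). For $j=1$ the $\mathcal{S}_{1,1}-\mathcal{S}_{2,1}$ combination is used, but only to produce the symmetric factor $\chi_N(k_1)^2|w_{k_2}|^2-\chi_N(k_2)^2|w_{k_1}|^2$, which is then \emph{recentered} (subtracting $\langle k\rangle^{-2s}$) to enable a degree-$2$ conditional Wiener chaos estimate in the high-frequency Gaussians; for $j=2$ the extra smoothing comes instead from $|\Omega(\vec{k})|\sim|k_{(1)}|^2$. None of this lowers the algebraic degree of the polynomial.

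Second, the exponent $\beta<1$ does \emph{not} come from hypercontractivity applied to a low-degree chaos. In every piece the conditional Wiener chaos estimate (Lemma~\ref{conditionalWiner}, conditioned on the $\sigma$-algebra generated by the low-frequency Gaussians so that the $H^\sigma$-ball cutoff becomes deterministic) yields a bound of the form $p\cdot N_{(1)}^{-\gamma}$ for some $\gamma>0$, i.e.\ linear growth in $p$ but with a summable negative power of the top frequency. This is then \emph{interpolated} against a crude deterministic bound $\lesssim N_{(1)}^{C}R^{10}$ to produce $p^{\beta}N_{(1)}^{-\gamma'}$ with $\beta<1$; see the end of the treatment of $\mathrm{II}$ in Proposition~\ref{firstgeneration}, the passage from \eqref{Jbounddeterministic} to \eqref{Jboundfinal}, and the analogous steps in Section~\ref{Section:rest}. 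Without this interpolation step your scheme would at best give $\beta=1$, which is not sufficient for the Yudovich-type argument in Section~\ref{proof:quasi}.
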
 
The proof of above two propositions will occupy the main part of the article. 
To prove the quasi-invariance of the full system, we need to pass to the limit $N\rightarrow \infty$ in  the approximated equation \eqref{NLS:SN}. 
This will be done in the next section. 
%%%%%%%%%%%%%%%%%%%%%%%%
%%%%%%%%%%%%%%%%%%%%%%%%%%%%%%%%%%%%%%%%%%%%%%%%%
\section{Proof of the quasi-invariance assuming energy estimates}\label{proof:quasi}
In this section we prove Theorem~\ref{thm:main}, assuming Proposition~\ref{weightedmeasure} and Proposition~\ref{energyestimate}.
\subsection{Approximation theory for the energy-critical NLS}
\begin{prop}\label{uniformboundHsigma}
Assume that $\sigma\geq 1$. There exists a constant $\Lambda(R,T)>0$, depending only on $T>0$, $R>0$ and $\sigma\geq 1$, such that for any $u\in B_R^{H^{\sigma}}$,
$$ \sup_{|t|\leq T}\|\Phi(t)u\|_{H^{\sigma}}+\sup_{|t|\leq T}\|\Phi_N(t)u\|_{H^{\sigma}}\leq \Lambda(R,T),\quad \forall\, N\in\N.
$$
\end{prop}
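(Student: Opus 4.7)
The plan is to handle $\sigma=1$ by conservation laws, and $\sigma>1$ by persistence of regularity on top of Ionescu-Pausader's global space-time control, with uniformity in $N$ coming from an energy-critical stability argument.

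\textbf{The case $\sigma=1$.} For $\Phi(t)$, the mass $M[u]$ and the energy $H[u]=\tfrac12\|\nabla u\|_{L^2}^2+\tfrac16\|u\|_{L^6}^6$ are conserved, and the defocusing sign together with $H^1\hookrightarrow L^6$ on $\T^3$ gives $\|\Phi(t)u\|_{H^1}\leq C(R)$. For $\Phi_N(t)$, the approximated equation admits the conserved modified energy $H_N[u_N]=\tfrac12\|\nabla u_N\|_{L^2}^2+\tfrac16\|S_N u_N\|_{L^6}^6\geq \tfrac12\|\nabla u_N\|_{L^2}^2$, and since $S_N$ is bounded on $L^6$ uniformly in $N$ we obtain $\|\Phi_N(t)u\|_{H^1}\leq C(R)$ with the same constant.

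\textbf{The case $\sigma>1$.} I would use persistence of regularity. The Ionescu-Pausader theorem \cite{IoPau} provides, on top of global well-posedness in $H^1$, an a priori space-time Strichartz bound on any interval $[-T,T]$ depending only on the $H^1$ norm of the data. Feeding this scattering-type norm into Duhamel together with the linear Strichartz estimates on $\T^3$ and fractional Leibniz (Kato-Ponce) estimates applied to $|u|^4u$ yields
\[
\sup_{|t|\leq T}\|\Phi(t)u\|_{H^{\sigma}}\leq C\bigl(T,\|u\|_{H^1}\bigr)\|u\|_{H^{\sigma}}.
\]

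\textbf{Uniformity in $N$ and the main obstacle.} For $\Phi_N(t)$, I would first split $u_N=\Pi_N u_N+(\mathrm{Id}-\Pi_N)u_N$: since $S_N(\mathrm{Id}-\Pi_N)=0$, the high-frequency piece evolves by $e^{it\Delta}$ and its $H^\sigma$ norm is conserved. For the remaining piece, I would view the approximated equation as a perturbation of \eqref{NLS} with error
\[
e_N:=|u_N|^4u_N-S_N\bigl(|S_Nu_N|^4S_Nu_N\bigr),
\]
which tends to zero in a dual Strichartz norm as $N\to\infty$ because $S_N\to\mathrm{Id}$ strongly on $L^p$ for $1<p<\infty$. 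The energy-critical stability theory on $\T^3$ (in the spirit of Colliander-Keel-Staffilani-Takaoka-Tao and \cite{IoPau}) then transfers the Strichartz bound from $\Phi(t)u$ to $\Phi_N(t)u$ uniformly in $N$, and the persistence of regularity of the previous paragraph concludes. The hard part is precisely this uniform-in-$N$ Strichartz control in the critical regime: one must partition $[-T,T]$ into finitely many subintervals on which the Strichartz norm of $\Phi(t)u$ is small enough to close a contraction for $u_N-u$, and propagate the closeness across subintervals by a bootstrap that is compatible with the critical scaling. The finitely many $N$ for which the smallness of $e_N$ is not yet in force are handled by a trivial compactness argument.
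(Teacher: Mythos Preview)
Your strategy is correct and largely parallels the paper's, but the mechanisms differ in two places worth noting.

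For the uniform-in-$\phi$ persistence of $H^\sigma$, you rely on the fact that the Ionescu--Pausader $Z$-norm bound on $[-T,T]$ depends only on the energy, and then partition $[-T,T]$ into subintervals with small $Z$-norm whose number is controlled by this bound. The paper instead exploits the compact embedding $H^\sigma(\T^3)\hookrightarrow H^1(\T^3)$: since $B_R^{H^\sigma}$ is \emph{compact} in $H^1$, a finite cover plus an $H^1$-stability lemma for the partition (Proposition~\ref{stabilitypartition}) yields a single $(A,\delta)$-partition valid for every $\phi\in B_R^{H^\sigma}$, with $m=m(R,T)$ intervals. Both routes give the uniform $m$; the paper's is more robust to the $\sup_{|J|\le 1}$ in the definition of $Z$ (cf.\ Remark~\ref{RemarkHsigma}).

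For $\Phi_N$, you propose transferring the Strichartz control from $u$ to $u_N$ by energy-critical stability, treating $e_N=|u_N|^4u_N-S_N(|S_Nu_N|^4S_Nu_N)$ as a small forcing. The paper takes a more direct path: since $S_N$ is bounded on $L^p$ uniformly in $N$, the \emph{same} multilinear estimate \eqref{Multilinears} applies to the truncated nonlinearity with constants independent of $N$, so the persistence iteration of Corollary~\ref{Hsigmabound} runs for $u_N$ exactly as for $u$ on the common partition, yielding $\|\Phi_N(t)\phi\|_{H^\sigma}\le C^m\|\phi\|_{H^\sigma}$ for all $N$ at once. This avoids your separate treatment of small $N$ (your ``trivial compactness'' step is really: for each of finitely many $N$ the finite-dimensional Hamiltonian structure plus $\|\cdot\|_{H^\sigma}\le N^{\sigma-1}\|\cdot\|_{H^1}$ on $\mathcal E_N$ gives a bound, then take the maximum).

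Two minor technical points: in the critical regime on $\T^3$ the nonlinear estimate is not a plain Kato--Ponce inequality but the $X^s/N^s$ multilinear estimate (Lemma~\ref{multilinear}) in the $U^2/V^2$ framework; and the uniformity of your threshold $N_0$ over $B_R^{H^\sigma}$ ultimately comes from the same $H^1$-compactness the paper uses (the error term $\delta_N(A,\epsilon,\phi)$ in \eqref{deltaN} tends to zero uniformly only on $H^1$-compact sets).
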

%%%%%%%%%%%%%%%
\begin{prop}\label{approximation} 
Assume that $\sigma\geq 1$.	Let $K$ be a compact subset of $H^{\sigma}(\T^3)$ and $T>0$. Then uniformly in $|t|\leq T$ and $u\in K$, $$\lim_{N\rightarrow\infty}\|\Phi_N(t)u-\Phi(t)u\|_{H^{\sigma}}=0.
	$$
	\end{prop}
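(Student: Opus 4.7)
The plan is to combine the uniform $H^{\sigma}$ bound from Proposition~\ref{uniformboundHsigma} with the Ionescu--Pausader well-posedness/stability theory for the energy-critical NLS on $\T^3$. Since $K$ is compact in $H^{\sigma}$ and $\sigma\geq 1$, by Proposition~\ref{uniformboundHsigma} we may fix $R_0 = \Lambda(R,T)+1$ with $R=\sup_{u\in K}\|u\|_{H^\sigma}$, so that the full and approximated trajectories starting in $K$ remain in $B_{R_0}^{H^\sigma}$ throughout $[-T,T]$. By the global space-time bound of \cite{IoPau}, for each such $u$ the solution $\Phi(t)u$ belongs to the relevant critical Strichartz-type space (the $U^2/V^2$-based $X^1(I)$ space of Herr--Tataru--Tzvetkov) on $[-T,T]$, with norm controlled by $R_0$. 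Consequently we can subdivide $[-T,T]$ into a finite number $J=J(R,T)$ of subintervals $I_j$ on each of which the $X^1(I_j)$-norm of $\Phi(t)u$ is smaller than the smallness threshold $\eta_0$ of the stability lemma.

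On each subinterval, the difference $w_N(t):=\Phi(t)u-\Phi_N(t)u$ satisfies
\[
(i\partial_t+\Delta)w_N = |u|^4u - S_N(|S_Nu_N|^4 S_Nu_N) = (I-S_N)(|u|^4u) + S_N\bigl(|u|^4u - |S_Nu_N|^4S_Nu_N\bigr),
\]
where $u=\Phi(t)u_0$ and $u_N=\Phi_N(t)u_0$. I would view $u_N$ as an approximate solution of the true NLS with forcing term equal to $(I-S_N)(|u|^4u)$ plus the Lipschitz-type nonlinear difference. The latter, at the $H^\sigma$ level with $\sigma\geq 1$ and uniform $H^\sigma$ control of both $u$ and $u_N$, is handled by the classical stability lemma (the second term is absorbed because $S_N$ is bounded on the critical spaces uniformly in $N$ and the nonlinear difference is Lipschitz in $X^1$ with constant controlled by $R_0$). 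The genuine error to estimate is therefore
\[
\bigl\|(I-S_N)(|u|^4u)\bigr\|_{N^1(I_j)} \longrightarrow 0 \quad \text{as } N\to\infty,
\]
which follows from the uniform $H^\sigma$ bound on $u$ together with the $\sigma\geq 1$ smoothing of $(I-S_N)$, since $\|(I-S_N)f\|_{H^1}\lesssim N^{-(\sigma-1)}\|f\|_{H^\sigma}$ and Sobolev embedding. Applying the stability lemma on $I_1$ yields $\|w_N\|_{L^\infty_tH^\sigma(I_1)}\to 0$; iterating $J$ times (using that the $H^\sigma$-bound at the right endpoint of $I_j$ gives valid data for $I_{j+1}$) delivers, for each fixed $u$,
\[
\lim_{N\to\infty}\sup_{|t|\leq T}\|\Phi_N(t)u-\Phi(t)u\|_{H^\sigma}=0.
\]

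To upgrade to uniform convergence on $K$, I would use compactness: fix $\varepsilon>0$ and cover $K$ by finitely many balls $B(u_i,\delta)$, $i=1,\dots,m$, where $\delta$ is chosen so that the stability lemma gives Lipschitz dependence of both $\Phi(t)$ and $\Phi_N(t)$ on initial data at scale $\delta$, uniformly in $|t|\leq T$ and in $N$ (this again follows from the stability argument applied to two true solutions, or to one true and one approximate solution, within $B_{R_0}^{H^\sigma}$). Choose $N_0$ so large that $\sup_{|t|\leq T}\|\Phi_N(t)u_i-\Phi(t)u_i\|_{H^\sigma}<\varepsilon$ for all $i$ and $N\geq N_0$; the triangle inequality then gives the claim.

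The main obstacle, as usual in the energy-critical regime, is the verification of stability with the frequency-truncated equation: one must ensure that $S_N$ does not destroy the critical estimates and that the subdivision $J(R,T)$ stays finite uniformly in $N$. This is handled by using the $X^1$/$Y^1$ framework of Herr--Tataru--Tzvetkov where $S_N$ is a uniformly bounded Fourier multiplier, together with the uniform $H^\sigma$ bound of Proposition~\ref{uniformboundHsigma} which prevents concentration of the approximated trajectories; the quantitative iteration argument is then identical to the one in \cite{IoPau}, merely applied to a difference of two solutions with explicit small forcing.
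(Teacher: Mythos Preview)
Your overall strategy is the right one and coincides with the paper's: use the Ionescu--Pausader global bounds to obtain a finite partition of $[-T,T]$ into intervals on which a local stability/perturbation lemma applies, iterate, and invoke compactness of $K$ for uniformity. There are, however, two points where your sketch is not quite correct and one place where the paper proceeds differently and more cleanly.

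First, a small but genuine slip: you cannot partition so that the $X^1(I_j)$-norm of $\Phi(t)u$ is small, since $\|v\|_{X^1(I)}\geq \|v\|_{L^\infty_t H^1_x}\geq \|\phi\|_{H^1}$ no matter how short $I$ is. What you want (and what the paper uses, via its notion of an $(A,\delta)$-partition) is that the $Z(I_j)$-norm is smaller than $\delta$, while the $X^1(I_j)$-norm stays bounded by a constant depending only on $R_0$. The multilinear estimate then gives a Lipschitz constant of the form $C\delta^2 R_0^2$, and it is this product that must be made $<\tfrac12$.

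Second, your treatment of the error when $\sigma=1$ has a gap. The bound $\|(I-S_N)f\|_{H^1}\lesssim N^{-(\sigma-1)}\|f\|_{H^\sigma}$ gives no decay at $\sigma=1$, and in any case $N^1$ is not an $L^1_tH^1_x$-type norm. Moreover, in your decomposition the term $S_N\bigl(|u|^4u-|S_Nu_N|^4S_Nu_N\bigr)$ is not purely ``Lipschitz'': it contains a genuine error contribution coming from $(I-S_N)u$ inside the nonlinearity, which also has to be shown to vanish. All of this can be repaired by a soft argument (for fixed $u$, both $|u|^4u\in N^1(I_j)$ and $u\in X^1(I_j)$, so the high-frequency projections go to zero by dominated convergence in the $\ell^2$ frequency sum defining $X^1$; compactness of $K$ in $H^1$ plus continuity of $u_0\mapsto u$ into $X^1$ then gives uniformity), but that is not what you wrote.

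The paper sidesteps this soft step via a sharper device. Writing $S_Nu_N=S_{N/16}u_N+(S_N-S_{N/16})u_N$ and noting that $S_N^\perp\bigl(|S_{N/16}u_N|^4S_{N/16}u_N\bigr)=0$ by frequency support, the forcing error is bounded by $\|S_{N/16}^\perp u_N\|_{X^1}$ times harmless factors. This yields the recursive inequality
\[
\|S_N^\perp u\|_{X^1(I)}\leq \|S_N^\perp\phi\|_{H^1}+CA^2\delta^2\,\|S_{N/16}^\perp u\|_{X^1(I)},
\]
from which one extracts an explicit quantity $\delta_N(A,\delta,\phi)$ depending only on the tails $\|S_{N/16^j}^\perp\phi\|_{H^1}$. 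Since these go to zero uniformly on any $H^1$-compact set, the uniformity on $K$ is automatic, with no separate Lipschitz-in-data argument for $\Phi_N$ needed. This recursive trick is the core of the paper's local convergence lemma; for $\sigma>1$ the paper then simply reruns the same argument with $X^\sigma,N^\sigma$ in place of $X^1,N^1$, using Proposition~\ref{uniformboundHsigma} and the multilinear estimate at regularity $\sigma$.
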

Observe that since $\Phi_N(t)$ and $\Phi(t)$ are continuous we have that for any $|t|\leq T$ and $N\in \N$, $\Phi_N(t)(K), \Phi(t)(K)$ are also compacts in $H^{\sigma}(\T^3)$.  The proof of Proposition~\ref{uniformboundHsigma}, Proposition~\ref{approximation} will be given in the appendix.
%%%%%%%%%	
\subsection{Proof of quasi-invariance}
First, we prove:
	\begin{lem}\label{Quasi-N}
	Let $T\geq 1$. Let $A\subset B_R^{H^{\sigma}}$ be a Borel measurable set. Then there exist $\epsilon_0>0$ and $C_{s,R,T}>0$, such that for all $N\in\N$ and $|t|\leq T$, $\mu_s(\Phi_N(t)(A))\leq C_{s,R,T}\cdot \mu_s(A)^{\frac{1-\epsilon_0}{4}}$.
	\end{lem}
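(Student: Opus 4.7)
The plan is the standard transport strategy of \cite{PTV1}: propagate the weighted Gaussian measure along the finite-$N$ flow, control its growth via the energy estimate of Proposition~\ref{energyestimate}, and transfer the resulting bound back to $\mu_s$ using the $L^p$ bounds of Proposition~\ref{weightedmeasure}.

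First I fix a large radius $R':=2\Lambda(R,T)+1$, where $\Lambda$ is the constant from Proposition~\ref{uniformboundHsigma}. Then $A\subset B_R^{H^\sigma}\subset B_{R'/2}^{H^\sigma}$ and $\Phi_N(t)A\subset B_{\Lambda(R,T)}^{H^\sigma}\subset B_{R'/2}^{H^\sigma}$ for all $|t|\le T$, so the smooth cutoff $\chi_{R'}(\|\cdot\|_{H^\sigma})\equiv 1$ both on $A$ and on $\Phi_N(t)A$. Set $g(t):=\ov{\rho}_{s,R',N}(\Phi_N(t)A)$. Exploiting the factorization $\Phi_N(t)=(\widetilde{\Phi}_N(t),e^{it\Delta})$ on $\mathcal{E}_N\times \mathcal{E}_N^{\perp}$, the Liouville property of the Hamiltonian flow $\widetilde{\Phi}_N$, the unitarity of $e^{it\Delta}$ on $H^s(\T^3)$, and the observation that $R_{s,N}(u)$ depends only on the low-frequency component of $u$, the change-of-variables formula gives
\[
g(t)=\int_A e^{E_{s,N}(v)-E_{s,N}(\Phi_N(t)v)}\,d\ov{\rho}_{s,R',N}(v).
\]
Differentiating in $t$, using $\frac{d}{dt}E_{s,N}(\Phi_N(t)v)=Q_{s,N}(\Phi_N(t)v)$, and then reversing the change of variables, yields
\[
g'(t)=-\int_{\Phi_N(t)A}Q_{s,N}(u)\,d\ov{\rho}_{s,R',N}(u).
\]
H\"older's inequality combined with Proposition~\ref{energyestimate} (applied with radius $R'$) then produces, for every $p\ge 2$, the Yudovich-type estimate $|g'(t)|\le C(s,R')\,p^{\beta}\,g(t)^{1-1/p}$, which integrates to
\[
g(t)^{1/p}\le g(0)^{1/p}+C(s,R')\,p^{\beta-1}T, \qquad\text{hence}\qquad g(t)\le 2^{p}g(0)+\bigl(2C(s,R')\,p^{\beta-1}T\bigr)^{p}.
\]

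The next step is to translate this back to $\mu_s$. Proposition~\ref{weightedmeasure} and Cauchy--Schwarz give
\[
g(0)=\int_A \chi_{R'}(\|v\|_{H^\sigma})\,e^{-R_{s,N}(v)}\,d\mu_s(v)\le C(s,R')\,\mu_s(A)^{1/2},
\]
while on $\Phi_N(t)A\subset B_{R'/2}^{H^\sigma}$, where $\chi_{R'}\equiv 1$, we likewise have
\[
\mu_s(\Phi_N(t)A)=\int_{\Phi_N(t)A}e^{R_{s,N}(u)}\,d\ov{\rho}_{s,R',N}(u)\le C(s,R')\,g(t)^{1/2}.
\]
Chaining the three inequalities produces
\[
\mu_s(\Phi_N(t)A)\le C\,2^{p/2}\,\mu_s(A)^{1/4}+C\bigl(2C\,p^{\beta-1}T\bigr)^{p/2}.
\]
Finally I optimize $p$. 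When $\mu_s(A)$ is bounded below by a fixed constant the conclusion is trivial since $\mu_s(\Phi_N(t)A)\le 1$. When $\mu_s(A)$ is small I choose $p\sim \tfrac{\log(1/\mu_s(A))}{(1-\beta)\log\log(1/\mu_s(A))}$: since $\beta<1$ this makes $(2C\,p^{\beta-1}T)^{p/2}\le \mu_s(A)^{(1-\epsilon_0)/4}$, while $2^{p/2}$ grows slower than any negative power of $\mu_s(A)$ and is therefore $\le \mu_s(A)^{-\epsilon_0/4}$, delivering the desired bound $C_{s,R,T}\,\mu_s(A)^{(1-\epsilon_0)/4}$ for any preassigned $\epsilon_0\in(0,1)$.

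The main obstacle is already absorbed into the statements invoked: the energy estimate of Proposition~\ref{energyestimate} is what drives the whole scheme, and its proof --- relying on the algebraic cancellation highlighted in the introduction --- occupies the bulk of the paper. The present lemma is the soft closure; the only point deserving care is the clean change-of-variables formula, which uses in an essential way both the Hamiltonian (Liouville) structure of $\widetilde\Phi_N$ on $\mathcal{E}_N$ and the unitarity of the transverse linear Schr\"odinger evolution on $\mathcal{E}_N^{\perp}$ to ensure that the Gaussian density transforms into the clean factor $e^{E_{s,N}(v)-E_{s,N}(\Phi_N(t)v)}$.
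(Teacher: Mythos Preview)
Your proof is correct and follows essentially the same route as the paper's. The only cosmetic differences are: the paper takes the larger safety radius $R_1=\Lambda(\Lambda(R,T),T)$ rather than your $R'=2\Lambda(R,T)+1$ (either works), and the paper integrates the Yudovich inequality as $g(t)\le g(0)\,e^{CTp^{\beta}g(0)^{-1/p}}$ and optimizes with $p=2+\log(1/g(0))$ at the level of the weighted measure before converting to $\mu_s$, whereas you split $(a+b)^p\le 2^p a^p+2^p b^p$, convert first, and then pick $p\sim \log(1/\mu_s(A))/[(1-\beta)\log\log(1/\mu_s(A))]$; both optimizations give the same $\mu_s(A)^{(1-\epsilon_0)/4}$ conclusion.
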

	
	\begin{proof}
Let $\Lambda(R,T)>0$ be the constant in Proposition \ref{uniformboundHsigma}, such that for all $R>0, N\in \N\cup\{\infty\}$,
$$ 
\Phi_N(t)(B_R^{H^{\sigma}})\subset B_{\Lambda(R,T)}^{H^{\sigma}},\quad |t|\leq T.
$$
Denote $R_1:=\Lambda(\Lambda(R,T),T)$, and we consider the weighted measure
\begin{align*} 
\dd\ov{\rho}_{s,R_1,N}(u)=&\dd\rho_{s,R_1,N}(u)\otimes \dd\mu_{s,N}^{\perp}\\=& \chi_{R_1}(\|u\|_{H^{\sigma}})\frac{1}{\mathcal{Z}_N}\e^{-E_{s,N}(u)}\Big(\prod_{|k|\leq N}\dd\widehat{u}_k\Big)\otimes \dd\mu_{s,N}^{\perp},
\end{align*}
where $\mathcal{Z}_N>0$ is the normalizing constant appearing in the finite-dimensional truncation of the Gaussian measure
$$ \dd\mu_{s,N}(u)=\frac{1}{\mathcal{Z}_N}\e^{-\frac{1}{2}\sum_{|k|\leq N}(1+|k|^{2s})|\widehat{u}_k|^2 }\Big(\prod_{|k|\leq N}\dd \widehat{u}_k\Big). 
$$
For $A\subset B_R^{H^{\sigma}}$, from Proposition \ref{uniformboundHsigma},  for any $|t_1|,|t_2|\leq T$ and $N\in\N$,
$$ \Phi_N(t_2)\circ\Phi_N(t_1)(A)\subset B_{R_1}^{H^{\sigma}}.
$$ 
In particular, for any $u\in A$, $|t|\leq 2T$, $\|\Phi_N(t)u\|_{H^{\sigma}}\leq R_1$.
Now for $|t_0|\leq T, |t|\leq 1$, using that for  $\chi_{R_1}(\|\Phi_N(t)u\|_{H^{\sigma}})\equiv 1$ for $u\in A$, as in  \cite{OTz4NLS,2DNLS, sigma}, we can obtain the following change of variable formula 
\begin{align*}
\ov{\rho}_{s,R_1,N}(\Phi_N(t_0+t)(A))=\int_{A}\frac{1}{\mathcal{Z}_N}\e^{-E_{s,N}(\Pi_N \Phi_N(t_0+t)u)}\prod_{|k|\leq N}\dd\widehat{u}_k\, 
\dd\mu_{s,N}^{\perp}(u).
\end{align*}
Observe that 
$$ 
Q_{s,N}(u)=\frac{\dd}{\dd t}E_{s,N}(\Phi_N(t)u)|_{t=0}=\frac{\dd}{\dd t}E_{s,N}(\Pi_N\Phi_N(t)u)|_{t=0}\,.
$$
Taking the time derivative of the above equality and evaluate it at $t=0$, we obtain the identity
\begin{align*}
\frac{\dd}{\dd t}\ov{\rho}_{s,R_1,N}(\Phi_N(t_0+t)(A))|_{t=0}=&
%- \int_{\Pi_N^{\perp}(A)}d\mu_{s,N}^{\perp}(u)\int_{\Pi_N(A)}Q_{s,N}(\Phi_N(t_0)u)\frac{1}{\mathcal{Z}_N}\e^{-E_{s,N}(\Phi_N(t_0)u)}\prod_{|k|\leq N}d\widehat{u}_k
-\int_{A}\frac{1}{\mathcal{Z}_N}
Q_{s,N}(\Phi_N(t_0)u)\,
\e^{-E_{s,N}(\Pi_N \Phi_N(t_0)u)}\prod_{|k|\leq N}\dd\widehat{u}_k \,\dd\mu_{s,N}^{\perp}(u)
\\
=&
-\int_{\Phi_N(t_0)(A)}\frac{1}{\mathcal{Z}_N}
Q_{s,N}(u)\,\e^{-E_{s,N}(\Pi_N u)}\prod_{|k|\leq N}\dd\widehat{u}_k \,\dd\mu_{s,N}^{\perp}(u),
%-\int_{\Pi_N^{\perp}\Phi_N(t_0)(A)}d\mu_{s,N}^{\perp}(u)\int_{\Pi_N\Phi_N(t_0)(A)}Q_{s,N}(u)\frac{1}{\mathcal{Z}_N}\e^{-E_{s,N}(u)}\prod_{|k|\leq N}d\widehat{u}_k,
\end{align*}
where we again used the change of variable formula. As for $u\in \Phi_N(t_0)(A)$, $1\leq \chi_{R_1}(\|u\|_{H^{\sigma}})$, we obtain the inequality
\begin{align*}
\Big|\frac{\dd}{\dd t}\ov{\rho}_{s,R_1,N}&(\Phi_N(t)(A))|_{t=t_0} \Big|\\ \leq & 
%\int_{\Pi_N^{\perp}\Phi_N(t_0)(A)}d\mu_{s,N}^{\perp}(u)\int_{\Pi_N\Phi_N(t_0)(A)}Q_{s,N}(u)\chi_{R_1}(\|u\|_{H^{\sigma}})\frac{1}{\mathcal{Z}_N}\e^{-E_{s,N}(u)}\prod_{|k|\leq N}d\widehat{u}_k
\int_{\Phi_N(t_0)(A)}
\chi_{R_1}(\|u\|_{H^{\sigma}})
\frac{1}{\mathcal{Z}_N}
|Q_{s,N}(u)|\,\e^{-E_{s,N}(\Pi_N u)}\prod_{|k|\leq N}\dd\widehat{u}_k \,\dd\mu_{s,N}^{\perp}(u)
\\
=
&\int_{\Phi_N(t_0)(A)}|Q_{s,N}(u)|\dd\ov{\rho}_{s,R_1,N}(u)\\
\leq &\|Q_{s,N}(u)\|_{L^p(\dd\ov{\rho}_{s,R_1,N})}\cdot \ov{\rho}_{s,R_1,N}(\Phi_N(t_0)(A))^{1-\frac{1}{p}}.
\end{align*}
Thanks to the last assertion of Proposition \ref{energyestimate},  the function 
$$ F(t):=\ov{\rho}_{s,R_1,N}(\Phi_N(t)(A)),
$$
satisfies the inequality
$$ 
F'(t)\leq C_{s,R}\cdot p^{\beta} F(t)^{1-\frac{1}{p}},\quad \forall |t|\leq T,\; \quad p<\infty 
$$
Integrating the differential inequality above, we obtain that
$$ F(t)\leq \big(F(0)^{\frac{1}{p}}+C_{s,R}\cdot p^{-(1-\beta)}t\big)^p\leq F(0)\e^{C_{R,s}tp^{\beta}F(0)^{-\frac{1}{p}}}.
$$
Without loss of generality, we assume that $F(0)=\overline{\rho}_{s,R_1,N}(A)>0$. Thanks to Proposition \ref{weightedmeasure},  $F(0)\leq M_{R,s}$ for some constant $M_{R,s}>0$. If $\log F(0)\geq 1$, we simply take $p=2$ to get
$$ F(t)\leq F(0)^{1-\epsilon_0}M_{R,s}^{\epsilon_0}\cdot  \e^{C_{R,s}t2^{\beta}\e^{-\frac{1}{2}} }
$$ 
for any $\epsilon_0\in(0,1)$.
Otherwise $\log F(0)<1$, we choose
$$ p=2+\log\Big(\frac{1}{F(0)}\Big).
$$
So we conclude that there exists $\epsilon_0\in(0,1)$, such that
$$ F(t)\leq C_{R,s,T}F(0)^{1-\epsilon_0},\quad \forall |t|\leq T,
$$
namely
$$ \ov{\rho}_{s,R_1,N}(\Phi_N(t)(A))\leq C_{R_1,s,T} \ov{\rho}_{s,R_1,N}(A)^{1-\epsilon_0},\quad \forall |t|\leq T.
$$
Finally,
as $\Phi_N(t)(A)\subset B_{R_1/2}^{H^{\sigma}}$,
$$ \mu_s(\Phi_N(t)(A))=\int_{\Phi_N(t)(A)}\chi_{R_1}(\|u\|_{H^{\sigma}})\cdot \e^{R_{s,N}(u)}\dd\ov{\rho}_{s,R,N}(u).
$$
By Cauchy-Schwarz and the $L^2$-integrability of $\chi_{R_1}(\|u\|_{H^{\sigma}})\cdot\e^{R_{s,N}(u)}$ with respect to $\dd\mu_s$ (Proposition \ref{weightedmeasure}), 
\begin{align}\label{measurepower1}   \mu_s(\Phi_N(t)(A))\leq &\|\chi_{R_1}(\|u\|_{H^{\sigma}})\e^{R_{s,N}(u)}\|_{L^2(\dd\ov{\rho}_{s,R,N})}\cdot \ov{\rho}_{s,R,N}(\Phi(t)(A))^{\frac{1}{2}}\notag \\
\leq &\|\chi_{R_1}(\|u\|_{H^{\sigma}})\e^{|R_{s,N}(u)|}\|_{L^1(\dd\mu_s)}^{\frac{1}{2}}\cdot \sqrt{C_{R_1,s,T}}\cdot \ov{\rho}_{s,R_1,N}(A)^{\frac{1-\epsilon_0}{2}} \notag \\
\leq &C_{R_1,s,T}'\cdot \ov{\rho}_{s,R_1,N}(A)^{\frac{1-\epsilon_0}{2}},
\end{align}
Again, since $A\subset B_R^{H^{\sigma}}\subset B_{R_1}^{H^{\sigma}}$, 
$$ \ov{\rho}_{s,R_1,N}(A)\leq \|\chi_{R_1}(\|u\|_{H^{\sigma}})\;\e^{R_{s,N}(u)}\|_{L^2(\dd\mu_s)}\mu_s(A)^{\frac{1}{2}}\leq C_{R_1,s}\cdot \mu_s(A)^{\frac{1}{2}}.
$$
Plugging into \eqref{measurepower1}, we complete the proof of Lemma \ref{Quasi-N}.
\end{proof}
%%%%%%%%%%%%%
\begin{proof}[Proof of Theorem \ref{thm:main}]
Let $T>0$, we first show that for any compact set $K\subset B_{R}^{H^{\sigma}}$, $|t|\leq T$,
$$ \mu_s(\Phi(t)(K))\leq C_{s,2R,T}\cdot \mu_s(K)^{\frac{1-\epsilon_0}{4}}.
$$
%where $\epsilon_0>0,$ $R_1:=\Lambda(\Lambda(R,T),T)$ are as in the proof of Lemma \ref{Quasi-N}. 
Indeed, applying the approximation theory (Proposition \ref{approximation}) to the set $\Phi(t)(K)$, which is compact, for any small $\epsilon>0$, there exists $N_0\in\N$, such that for all $N\geq N_0$, $$\Phi_N(-t)(\Phi(t)(K))\subset \Phi(-t)(\Phi(t)(K))+B_{\epsilon}^{H^{\sigma}}=K+B_{\epsilon}^{H^{\sigma}},$$  thus $\Phi(t)(K)\subset \Phi_N(t)(K+B_{\epsilon}^{H^{\sigma}})$, consequently,
\begin{align*}
 \mu_s(\Phi(t)(K))\leq \mu_s(\Phi_N(t)(K+B_{\epsilon}^{H^{\sigma}}) ).
\end{align*} 
Since for small $\epsilon>0$, $K+B_{\epsilon}^{H^{\sigma}}\subset B_{2R}^{H^{\sigma}}$,  Lemma \ref{Quasi-N} implies that 
 \begin{align}\label{measure:ineq}
  \mu_s(\Phi(t)(K))\leq \mu_s(\Phi_N(t)(K+B_{\epsilon}^{H^{\sigma}} ) )\leq C_{s,2R,T}\cdot\mu_s(K+B_{\epsilon}^{H^{\sigma}})^{\frac{1-\epsilon_0}{4}}.
 \end{align}
 We are going to take the limit $\epsilon\rightarrow 0$ in the inequality above, using the fact that $\mu_s$ is regular. Before doing that, we have to show that for any open set $G\supset K$, there exists $\epsilon>0$, such that
 $$ G\supset K+B_{\epsilon}^{H^{\sigma}}.
 $$
 Since  $K$ is compact, for any open set $G\supset K$, there exist finitely many balls $B_1,\cdots B_m$ of $H^{\sigma}$ such that 
  $$ K\subset \bigcup_{j=1}^mB_j\subset \bigcup_{j=1}^m2B_j\subset G, 
  $$
  where $2B_j$ is the ball with the same center as $B_j$ and with radius twice of $B_j$.
  In particular, there exists $\epsilon_1>0$, such that for all $0<\epsilon<\epsilon_1$,
  $$ K+B_{\epsilon}^{H^{\sigma}}\subset G.
  $$ 
  To see this, we take $\epsilon_1<\frac{1}{4}\min\{\mathrm{radius}(B_j): j=1,\cdots,m \}$. Then for any $u\in K+B_{\epsilon_1}^{\sigma}$, there exists $u_0\in K$, such that $\|u-u_0\|_{H^{\sigma}}<\epsilon_1$. As $K$ is covered by $B_j$, there is a ball, say $B_{1}$ with center $u_1$, such that $\|u_0-u_1\|_{H^{\sigma}}<\mathrm{radius}(B_1)$. Hence $u\in 2B_1\subset G$.

 Recall that Gaussian measures are regular, namely, for any Borel set $A$
 \begin{align*}  \mu_s(A)=&\inf\{\mu_s(G):\; G\supset A,\; G \text{ open in } H^{\sigma} \}\\
 	=&\sup\{\mu_s(F):\; F\subset A,\; F \text{ compact and Borel in } H^{\sigma} \}
 \end{align*}
 we can take $\epsilon\rightarrow 0$ on the right hand side of \eqref{measure:ineq} to obtain the estimate
\begin{align} \label{measure:ineq'}
\mu_s(\Phi(t)(K))\leq  C_{s,2R,T}
 \cdot  \mu_s(K)^{\frac{1-\epsilon_0}{4}},
\end{align}
as desired.

Finally we assume that $A\subset B_{R}^{H^{\sigma}}$ is an arbitrary Borel set. Since $\Phi(t)$ is a continuous bijection on $H^{\sigma}(\T^3)$, $\Phi(t)(A)$ is also a Borel set (view $\Phi(t)(A)=(\Phi(-t))^{-1}(A)$). Thus there exists a sequence of compact sets $K_n(t)\subset \Phi(t)(A)$, such that
$$ \mu_s(\Phi(t)(A))=\lim_{n\rightarrow\infty}\mu_s(K_n(t)).
$$ 
For fixed $|t|\leq T$, set $F_n(t)=\Phi(-t)(K_n(t))$, by the bijectivity of $\Phi(t)$, $K_n(t)=\Phi(t)(F_n(t))$. Since $F_n(t)$ are also compact (Proposition \ref{approximation}), we deduce that
$$ \mu_s(K_n(t))=\mu_s(\Phi(t)(F_n(t)))\leq C_{R_1,s,T}\cdot \mu_s(F_n(t))^{\frac{1-\epsilon_0}{4}}.
$$
Again from $K_n(t)=\Phi(t)(F_n(t)
)\subset \Phi(t)(A)$ and the bijectivity, $F_n(t)\subset A$, thus 
$$ \mu_s(K_n(t))\leq C_{R_1,s,T}\cdot \mu_s(A)^{\frac{1-\epsilon_0}{4}}.
$$
Letting $n\rightarrow\infty$, we deduce that
$$ \mu_s(\Phi(t)(A))\leq C_{R_1,s,T}\cdot \mu_s(A)^{\frac{1-\epsilon_0}{4}}.
$$
In particular, if $\mu_s(A)=0$, we must have $\mu_s(\Phi(t)(A))=0$. This proves the quasi-invariance property of $\mu_s$ along the flow $\Phi(t)$.
\end{proof}
%%%%%%%%%%%%%%%%%%%%%%%%%%%%%%%%%%%%%%%%%%%%%%%%%%%%%
%%%%%%%%%%%%%%%%%%%%%%%%%%%%%%%%%%%%%%%%%%%%%%%%%%%%%%%
%%%%%%%%%%%%%%%%%%%%%%%%%%%%%%%%%%%%
%%%%%%%%%%%%%%%%%%%%%%%%%%%%%%%%%%%%%
\section{Preliminaries for the energy estimates}\label{prelim}
In this section, we summarize several frequently used preliminary results as well as some notations. 
\subsection{Deterministic tools}	
For a given set of frequencies $k_1,k_2,\cdots,k_m$, we denote $k_{(1)},k_{(2)},\cdots,k_{(m)}$ a non-increasing rearrangement such that
$$ |k_{(1)}|\geq |k_{(2)}|\geq \cdots\geq |k_{(m)}|.
$$
Similarly, for a given set of dyadic integers $N_1,N_2,\cdots,N_m$, we denote $N_{(1)},N_{(2)},\cdots,N_{(m)}$ a non-increasing rearrangement such that
$$ N_{(1)}\geq N_{(2)}\geq\cdots\geq N_{(m)}.
$$
We have the following estimate on the the function $\psi_s$ which measures the lack of conservation of $H^s$ based quantities. 
\begin{lem}\label{monto_gordo}
Set
\[
\psi_{2s}(\vec{k})=\sum_{j=1}^{6}(-1)^{j-1}|k_j|^{2s},\quad 
\Omega(\vec{k})=\sum_{j=1}^{6}(-1)^{j-1}|k_j|^2.
\]
Then for $k_1-k_2+k_3-k_4+k_5-k_6=0$,
$$
 |\psi_{2s}(\vec{k})|\lesssim |k_{(1)}|^{2s-2}[|\Omega(\vec{k})|+|k_{(3)}|^2].
 $$
 \end{lem}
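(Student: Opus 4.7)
The plan is a short case analysis. First, exploit the symmetries: $\psi_{2s}$ and $\Omega$ are invariant under permutations within the odd indices $\{k_1,k_3,k_5\}$ and, separately, within the even indices $\{k_2,k_4,k_6\}$, while the pair $(\psi_{2s},\Omega)$ only changes sign if we swap the two groups. Since the desired bound is on $|\psi_{2s}|$, I can relabel to assume $|k_{(1)}|=|k_1|$ and, inside the even block, that $|k_2|\ge |k_4|\ge|k_6|$, and inside the odd block that $|k_3|\ge|k_5|$. Next, I would observe that the zero–sum constraint $k_1-k_2+k_3-k_4+k_5-k_6=0$ gives $|k_1|\le|k_2|+|k_3|+|k_4|+|k_5|+|k_6|\le 5|k_{(2)}|$, so the top two magnitudes are comparable: $|k_{(1)}|\sim|k_{(2)}|=:M$.

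Now split into two cases depending on whether $|k_{(2)}|$ sits in the opposite sign group or the same sign group as $|k_{(1)}|$.

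\emph{Case A: $|k_{(2)}|=|k_2|$.} Here $|k_1|\sim|k_2|\sim M$ and the remaining four frequencies are all $\le |k_{(3)}|$. Write
\[
\psi_{2s}(\vec k)=\bigl(|k_1|^{2s}-|k_2|^{2s}\bigr)+\sum_{j=3}^{6}(-1)^{j-1}|k_j|^{2s}.
\]
For the first term I use the elementary estimate $|a^{2s}-b^{2s}|\lesssim (a+b)^{2s-2}|a^2-b^2|$ for $a,b>0$ (which follows from the mean value theorem applied to $t\mapsto t^{s}$), which here gives $||k_1|^{2s}-|k_2|^{2s}|\lesssim M^{2s-2}||k_1|^2-|k_2|^2|$. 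Then use the zero–sum constraint in the form
\[
||k_1|^2-|k_2|^2|\le |\Omega(\vec k)|+\sum_{j=3}^{6}|k_j|^2\lesssim |\Omega(\vec k)|+|k_{(3)}|^2 .
\]
The remaining sum is crudely $\lesssim |k_{(3)}|^{2s}=|k_{(3)}|^{2s-2}\cdot|k_{(3)}|^2\le M^{2s-2}|k_{(3)}|^2$, which combines to give the claimed bound.

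\emph{Case B: $|k_{(2)}|=|k_3|$.} Now $|k_1|\sim|k_3|\sim M$ and $|k_2|,|k_4|,|k_5|,|k_6|\le|k_{(3)}|$, so the trivial bound gives $|\psi_{2s}|\lesssim M^{2s}$. Here there is no cancellation between the top two frequencies in $\psi_{2s}$, but the same lack of cancellation occurs in $\Omega$: one has
\[
|\Omega(\vec k)|\ge |k_1|^2+|k_3|^2-\sum_{j\in\{2,4,5,6\}}|k_j|^2\ge cM^2-4|k_{(3)}|^2
\]
for a constant $c>0$ coming from $|k_1|,|k_3|\sim M$. If $|k_{(3)}|^2\le \tfrac{c}{8}M^2$ then $|\Omega|\gtrsim M^2$, so $M^{2s-2}|\Omega|\gtrsim M^{2s}\gtrsim|\psi_{2s}|$; otherwise $|k_{(3)}|^2\gtrsim M^2$, so directly $M^{2s-2}|k_{(3)}|^2\gtrsim M^{2s}\gtrsim|\psi_{2s}|$. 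Either way the desired estimate holds.

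The argument is essentially bookkeeping; the only mildly delicate point is Case B, where one must notice that $|\Omega|$ itself is forced to be large (or $|k_{(3)}|$ comparable to $|k_{(1)}|$) precisely because the two largest frequencies carry the \emph{same} sign in $\Omega$. That structural observation is the only place the signs $(-1)^{j-1}$ really enter, and I expect it to be the main (minor) obstacle to writing down a clean unified estimate.
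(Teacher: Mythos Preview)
Your proof is correct and follows essentially the same approach as the paper's: reduce by symmetry to $|k_{(1)}|=|k_1|$, note $|k_{(1)}|\sim|k_{(2)}|$ from the zero-sum constraint, and split according to whether $k_{(2)}$ lies in the opposite sign group (mean-value argument) or the same sign group (where $|\Omega|\gtrsim |k_{(1)}|^2$). The paper's write-up is terser---it first disposes of the regime $|k_{(3)}|\gtrsim|k_{(2)}|$ globally and then, in the same-sign case, simply invokes the trivial bound $|\psi_{2s}|\lesssim|k_{(1)}|^{2s}$ without spelling out that $|\Omega|\gtrsim|k_{(1)}|^2$---but your Case~B makes that point explicit, which is an improvement in clarity rather than a different idea.
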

 \begin{proof}
 We can suppose that $|k_{(3)}|\ll |k_{(2)}|$, otherwise the estimate reduces to the straightforward bound
 $
  |\psi_{2s}(\vec{k})|\lesssim |k_{(1)}|^{2s}.
 $
 Essentially, there are two different cases : $k_{(1)}=k_1$,  $k_{(2)}=k_2$ and  $k_{(1)}=k_1$,  $k_{(2)}=k_3$. In the second case we can again use the bound  
 $
  |\psi_{2s}(\vec{k})|\lesssim |k_{(1)}|^{2s}.
 $
 Let is now suppose that $k_{(1)}=k_1$,  $k_{(2)}=k_2$.  By the mean-value theorem, 
$$ 
\big| |k_1|^{2s}-|k_2|^{2s}\big| \lesssim |k_{(1)}|^{2(s-1)}\big| |k_1|^{2}-|k_2|^{2}\big| \lesssim |k_{(1)}|^{2s-2}[|\Omega(\vec{k})|+|k_{(3)}|^2].
$$
This completes the proof of Lemma \ref{monto_gordo}.
\end{proof}
%%%%
For linear constraints, we denote
\begin{align}\label{not:linearconstraint} \fk{h}_{k_1^{\iota_1}k_2^{\iota_2}\cdots k_{m}^{\iota_m}}:=\mathbf{1}_{\iota_1k_1+\iota_2k_2+\cdots+\iota_mk_m=0},
\end{align}
where $\iota_j\in\{+1,-1\}$, identified also as $\{+,-\}$, the signature of frequencies $k_1,\cdots,k_m$. For example, $$\fk{h}_{k_1^+k_2^-k_3^+k_4^-k_5^+k_6^-}=\mathbf{1}_{k_1-k_2+k_3-k_4+k_5-k_6=0}.$$	
Sometimes in the computation we exploit this notation to shorten the expression. 

We will frequently use the following elementary counting bound:
	\begin{lem}\label{threevectorcounting} 
		Assume that $n\geq 2$ and given dyadic numbers $N_1, N_2,\cdots N_n$. Then uniformly in $K\in\Z^3$, $\kappa\in\R$ and $\iota_j\in\{+1,-1\}$, we have
		$$ \sum_{\substack{k_1,k_2,\cdots,k_n\\
	    \iota_ik_i+\iota_jk_j\neq 0,\forall i\neq j	
	 }   }\mathbf{1}_{\iota_1k_1+\iota_2k_2+\cdots+\iota_nk_n=K }\cdot  
		 \mathbf{1}_{  \iota_{1} |k_1|^2+\cdots+\iota_{n }|k_n|^2=\kappa  } 
 \Big(\prod_{j=1}^n \mathbf{1}_{|k_j|\sim N_j }\Big)\lesssim N_{(2)}^2\prod_{j=3}^nN_{(j)}^3,
		$$
where we adopt the convention that when $n=2$, the bound on the right hand side is $N_{(2)}^2$.
\end{lem}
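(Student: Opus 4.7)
The plan is a standard two-step reduction: sum freely over all but two of the variables, then handle the residual two-variable count by inverting the linear constraint and analyzing the quadratic locus that remains in $\Z^3$. Since both sides of the inequality are symmetric under permutation of indices (with a relabeling of signs), I may assume $N_1\geq N_2\geq\cdots\geq N_n$, so that $N_{(j)}=N_j$. The trivial shell volume bound $\#\{k\in\Z^3:|k|\sim N\}\lesssim N^3$ allows me to sum over $k_3,\ldots,k_n$, contributing $\prod_{j=3}^n N_j^3$; what remains is to establish, uniformly in parameters $K'\neq 0$ and $\kappa'$ and signs, the $n=2$ estimate
$$
\#\bigl\{(k_1,k_2)\in(\Z^3)^2 : \iota_1 k_1+\iota_2 k_2=K',\ \iota_{\fk{l}_1}|k_1|^2+\iota_{\fk{l}_2}|k_2|^2=\kappa',\ |k_j|\sim N_j\bigr\}\lesssim N_{(2)}^2.
$$
The hypothesis $\iota_1 k_1+\iota_2 k_2\neq 0$ translates precisely to $K'\neq 0$ after this reduction.

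Next I invert the linear relation by setting $k_1=\iota_1(K'-\iota_2 k_2)$ and substitute into the quadratic. When $\iota_{\fk{l}_1}=-\iota_{\fk{l}_2}$ the quadratic terms in $k_2$ cancel and one is left with a single nontrivial linear equation $K'\cdot k_2=\mathrm{const}$; because $K'\neq 0$ this defines a genuine affine hyperplane, and the count is at most the number of lattice points in a two-dimensional disk of radius $\sim N_{(2)}$, namely $\lesssim N_{(2)}^2$. When $\iota_{\fk{l}_1}=\iota_{\fk{l}_2}$, completing the square produces a sphere equation $|k_2-\tfrac{\iota_2}{2}K'|^2=r^2$ with $r^2=\iota_{\fk{l}_1}\kappa'/2-|K'|^2/4$, so $k_2$ is constrained to this sphere intersected with the shell $|k_2|\sim N_{(2)}$.

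The hard step is the sphere case in the regime $N_{(1)}\gg N_{(2)}$, since the naive sphere bound $r^2$ can be of order $N_{(1)}^2$. To overcome this I would exploit that $|k_1|\sim N_{(1)}\gg N_{(2)}\sim|k_2|$ together with $\iota_1 k_1+\iota_2 k_2=K'$ forces $|K'|\sim N_{(1)}$, so the sphere has center at distance $|K'|/2\sim N_{(1)}/2$ from the origin; an elementary computation gives $(|K'|/2)^2-r^2=\iota_1\iota_2\, k_1\cdot k_2$, whence $\bigl||K'|/2-r\bigr|\lesssim |k_1\cdot k_2|/N_{(1)}\lesssim N_{(2)}$. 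The origin therefore sits at distance $\lesssim N_{(2)}$ from the sphere, and the constraint $|k_2|\sim N_{(2)}$ confines $k_2$ to a spherical cap near the point of the sphere closest to the origin. Because the radius of curvature $r\sim N_{(1)}$ is much larger than $N_{(2)}$, a direct Taylor expansion (using spherical coordinates centered at this nearest point) shows the cap has angular extent $\lesssim N_{(2)}/r$ and area $\lesssim N_{(2)}^2$; its lattice-point count is then dominated by this area. In the complementary regime $N_{(1)}\sim N_{(2)}$ the crude bound $r^2\lesssim N_{(1)}^2\sim N_{(2)}^2$ is already sufficient.
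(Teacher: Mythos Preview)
Your argument is correct, but it is considerably more elaborate than what the paper intends. The paper does not write out a proof, calling it ``trivial'' and remarking that it holds on a general torus without arithmetic. The intended argument is the following: after reducing to the two-variable count exactly as you do, eliminate $k_1$ via the linear constraint and observe that the remaining equation for $k_2=(a,b,c)$ is either a genuine quadratic in each coordinate (sphere case) or an affine equation with at least one nonzero coefficient (hyperplane case, since $K'\neq 0$). In either case one may fix two of the three coordinates of $k_2$ --- contributing $\lesssim N_{(2)}^2$ choices since $|k_2|\sim N_{(2)}$ --- and solve a polynomial equation of degree $\leq 2$ for the third, yielding at most two solutions. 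This works uniformly in all regimes and sign configurations; no case distinction between $N_{(1)}\sim N_{(2)}$ and $N_{(1)}\gg N_{(2)}$ is needed, and no geometry of spherical caps enters.

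Your spherical-cap analysis in the regime $N_{(1)}\gg N_{(2)}$ is valid, but the final step ``its lattice-point count is then dominated by this area'' still requires justification, and the natural way to justify it is precisely the projection argument above: choose the coordinate direction closest to $\hat c$, project the cap onto the orthogonal plane (each fiber meets the sphere in at most two points), and observe that the projection lies in a disk of radius $\lesssim N_{(2)}$. So your geometric route, once made rigorous, collapses to the same elementary count. What you gain from the geometric picture is intuition; what the direct coordinate argument buys is brevity and the immediate observation that nothing number-theoretic is used, which is the point the paper emphasizes.
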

\begin{rem}
The counting bound stated here is very rough but it already fits our need. By using some arithmetic, one can improve it when $n\geq 3$ or $n=2$ and $\iota_1=\iota_2$. 
We refer to Lemma 4.5 of \cite{DNY2} for such an improvement . The estimate of Lemma \ref{threevectorcounting} has the advantage to hold with the same (trivial) proof on a general torus. 
\end{rem}
Next we recall the following conditional Wiener chaos estimate for multi-linear expression of complex Gaussian random variables. In the sequel we adopt the notation $z^{+}=z$ and $z^-=\ov{z}$ for a complex number $z\in\C$. 
	\begin{lem}[Wiener chaos estimate]\label{conditionalWiner} 
		Consider the multi-linear expression of Gaussian:
		$$ F(\omega',\omega)=\sum_{k_{1},\cdots,k_{n}}c_{k_{1},\cdots,k_{n}}(\omega')\cdot \prod_{j=1}^{n}g_{k_{j} }  ^{\iota_{j}}(\omega),
		$$
		where the random variables $c_{k_{1},\cdots,k_{n} }(\omega')$ are independent of complex standard i.i.d. Gaussians $g_{k_j}(\omega) $. Then for any $p\geq 2$, we have
		$$ \|F(\omega',\omega)\|_{L_{\omega}^p}\leq Cp^{\frac{n}{2}}\|F(\omega',\omega) \|_{L_{\omega}^2}.$$
	\end{lem}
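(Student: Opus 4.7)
The plan is to reduce the claim to the classical Nelson hypercontractivity (Wiener chaos) estimate by conditioning on $\omega'$. Both sides of the inequality are $L^p_\omega$ norms, so $\omega'$ enters only as a parameter, and the stated independence of the coefficients $c_{k_1,\ldots,k_n}(\omega')$ from the Gaussian family $\{g_{k_j}(\omega)\}$ allows one to freeze $\omega'$ and view the coefficients as deterministic complex numbers. The problem thereby collapses to an $L^p$-estimate for a polynomial of degree at most $n$ in i.i.d.\ complex Gaussians.

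Next, writing each $g_k = \tfrac{1}{\sqrt{2}}(h_k + i h_k')$ as a sum of two independent real standard Gaussians, $F(\omega',\cdot)$ becomes a polynomial of total degree at most $n$ in the real centered i.i.d.\ Gaussian family $\{h_k,h_k'\}$. Consequently its real and imaginary parts each lie in the direct sum $\bigoplus_{j=0}^{n}\mathcal{H}_j$ of the first $n$ real Hermite chaoses attached to this Gaussian Hilbert space. The complex monomial structure with signatures $\iota_j\in\{+,-\}$ poses no issue, since conjugation preserves the chaos degree.

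At this stage I would invoke the standard hypercontractive bound (Nelson's theorem; see e.g.\ Janson, \emph{Gaussian Hilbert Spaces}, Theorem 5.10): for any $X\in\bigoplus_{j=0}^{n}\mathcal{H}_j$ and any $p\geq 2$, one has $\|X\|_{L^p}\leq (p-1)^{n/2}\|X\|_{L^2}$. Absorbing $(p-1)^{n/2}\leq p^{n/2}$ into the constant $C$ and combining the estimates for the real and imaginary parts of $F$ gives the claimed bound pointwise in $\omega'$, hence as $L^p_\omega$ norms. No real obstacle arises; the only point worth stressing is that the randomness of the coefficients is harmless precisely because the inequality is already $\omega'$-pointwise, so no Fubini step or joint integrability input is needed.
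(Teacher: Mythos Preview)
Your argument is correct and is exactly the standard one: freeze $\omega'$, regard $F(\omega',\cdot)$ as a polynomial of degree at most $n$ in real i.i.d.\ Gaussians, and apply Nelson hypercontractivity to get $(p-1)^{n/2}\leq p^{n/2}$. The paper does not supply a proof of this lemma at all; it is stated as a known result and used as a black box, so there is nothing further to compare.
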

We state the Wiener chaos estimate in above form  since later on we will use Lemma \ref{conditionalWiner} for $L^p$ estimates for high-frequency Gaussians conditioning to some $\sigma$-algebra generated by low-frequency Gaussians (see also \cite{STz2} for a statement involving conditional expectation). 
Starting from \cite{Bringmann},  in recent years such conditioned Wiener chaos estimates were extensively use in the field of random dispersive PDE's. 
	%%%%%%%%%%%%%%%%%%%%%%%%%%%%%%%%%%%%%%%%%%%%%%%%%%%%%%%%%%%%%%%%%%%%
%%	%%%%%%%%%%%%%%%%%%%%%%%%%%%%%%%%%%%%%%%%%%%%%%%%%%%%%%%%%%%%%%%%%%%%%%%%%%%%%%%%%%%%%%%%%%%%%%%%%%%%%%%%%%%%%%%%%%%%%%%%%%%%%%%%%%%%%%%%%%%%%%%%%%%%%%%%%%%%%%%%%%%%%%%%%%%%%%%%%%%%%%%%%%%%%%%%%%%%%%%%%%%%%%%%%%%%%%%%%%%%%%%%%%%%%%%%%%%%%%%%%%%%%	
\section{Decomposion of the differential of the modified energy}\label{graficd}	
Recall from \eqref{dmodifiedenergy} that
$$ 
Q_{s,N}(w)=\Im\big(-\frac{1}{6}\mathcal{R}_0(w)+\frac{1}{2}\mathcal{R}_{1}(w)-\frac{1}{2}\mathcal{R}_{2}(w)\big), 
$$
where
\begin{equation}\label{defR0} 
\mathcal{R}_0(w):=\sum_{k_1-k_2+\cdots-k_{6}=0  }
		\chi\Big(\frac{\Omega(\vec{k})}{\lambda(\vec{k})^{\delta_0}}\Big)
		\psi_{2s}(\vec{k})w_{k_1}\ov{w}_{k_2}\cdots \ov{w}_{k_{6}},
	\end{equation}
	\begin{align}\label{def:R11} 
		\mathcal{R}_{1}(w):=\sum_{\substack{k_1-k_2+\cdots-k_{6}=0\\
	k_1=p_1-p_2+p_3-p_4+p_5	
	 }} &\Big(1-	\chi\Big(\frac{\Omega(\vec{k})}{\lambda(\vec{k})^{\delta_0}}\Big)\Big)
		\frac{\psi_{2s}(\vec{k})}{\Omega(\vec{k})} 
		\chi_N(k_1)^2
		w_{p_1}\ov{w}_{p_2}\cdots w_{p_{5}}\ov{w}_{k_2}\cdots \ov{w}_{k_6}
	\end{align}
	and
	\begin{align}\label{defR12} 
		\mathcal{R}_{2}(w):=\sum_{\substack{ k_1-k_2+\cdots -k_{6}=0 \\
	k_2=q_1-q_2+q_3-q_4+q_5	
	} }
		\Big(1-	\chi\Big(\frac{\Omega(\vec{k})}{\lambda(\vec{k})^{\delta_0}}\Big)\Big)	\frac{\psi_{2s}(\vec{k})}{\Omega(\vec{k})} \chi_N(k_2)^2w_{k_1}\ov{w}_{q_1}\cdots \ov{w}_{q_{5}}w_{k_3}\cdots \ov{w}_{k_{6}}.
	\end{align}
	Comparing to the estimate for $\mathcal{R}_0(w)$, the major difficulty in estimating $\mathcal{R}_1(w),\mathcal{R}_2(w)$ is the existence of pairing contributions between different generations ($w_{k_j}$ and $w_{p_j}$ (or  $w_{k_j}$ and $w_{q_j}$)).
Roughly speaking, the singular pairing contributions in $\mathcal{R}_{1}(v)$ are (up to symmetry)
	\begin{itemize}
		\item $|k_1|\sim |k_2|\gg |k_3|+|k_4|+|k_5|+|k_6|$, $|k_1|\sim |k_2|\gg |p_2|+|p_3|+|p_4|+|p_5|$ and $p_1=k_2$;
		\item $|k_1|\sim |k_3|\gg |k_2|+|k_4|+|k_5|+|k_6|, |k_1|\sim |k_3|\gg |p_1|+|p_3|+|p_4|+|p_5|$ and $p_2=k_3$.
	\end{itemize}
Now we identify these pairing contributions precisely:
	\begin{align}\label{defLambda11} 
		\Lambda_{1,1}:=\Big\{(p_1,\cdots,p_5,k_2,\cdots,k_6):&\sum_{j=1}^5(-1)^{j-1}p_j+\sum_{i=2}^6(-1)^{i-1}k_i=0,\notag \\
		& k_{2}=p_{1},\ \sum_{i\in\{3,4,5,6\} }|k_{i}|\leq |k_1|^{\theta}+ |k_{2}|^{\theta},\; \sum_{j\in\{2,3,4,5\} }|p_{j}|\leq |k_1|^{\theta}+|k_{2}|^{\theta}  \Big\}
	\end{align}
	and
	\begin{align}\label{defLambda12} 
		\Lambda_{1,2}:=\Big\{(p_1,\cdots,p_5,k_2,\cdots,k_6):&\sum_{j=1}^5(-1)^{j-1}p_j+\sum_{i=2}^6(-1)^{i-1}k_i=0, \notag \\
		& k_{3}=p_{2},\; \sum_{i\in\{2,4,5,6\} }|k_{i}|\leq |k_1|^{\theta}+ |k_{3}|^{\theta},\; \sum_{j\in\{1,3,4,5\} }|p_{j}|\leq |k_1|^{\theta}+ |k_{3}|^{\theta}  \Big\},
	\end{align}
	where $0<\theta<\frac{\delta_0}{2}<\frac{1}{3}$ is close to $\frac{1}{3}$. We define correspondingly
	\begin{align}\label{defS11}
		&\mathcal{S}_{1,1}(w):=\sum_{\Lambda_{1,1} }\chi_N(k_1)^2|w_{k_2}|^2\Big(1-	\chi\Big(\frac{\Omega(\vec{k})}{\lambda(\vec{k})^{\delta_0}}\Big)\Big)\frac{\psi_{2s}(\vec{k})}{\Omega(\vec{k})} w_{k_3}\ov{w}_{k_4}w_{k_5}\ov{w}_{k_6}\cdot 
		\ov{w}_{p_2}w_{p_3}\ov{w}_{p_4} w_{p_5}
	\end{align}
	and
	\begin{align}\label{defS12}
		&\mathcal{S}_{1,2}(w):=\sum_{\Lambda_{1,2}}\chi_N(k_1)^2|w_{k_3}|^2
		\Big(1-	\chi\Big(\frac{\Omega(\vec{k})}{\lambda(\vec{k})^{\delta_0}}\Big)\Big)
		\frac{\psi_{2s}(\vec{k})}{\Omega(\vec{k})}\ov{w}_{k_2}\ov{w}_{k_4}w_{k_5}\ov{w}_{k_6}\cdot w_{p_1}w_{p_3}\ov{w}_{p_4}w_{p_5}.
	\end{align}

Similarly, the pairing contributions in $\mathcal{R}_{2}$ are (up to symmetry)
	\begin{align}\label{defS21}
		&\mathcal{S}_{2,1}(w):=\sum_{\Lambda_{2,1} }\chi_N(k_2)^2|w_{k_1}|^2\Big(1-	\chi\Big(\frac{\Omega(\vec{k})}{\lambda(\vec{k})^{\delta_0}}\Big)\Big)\frac{\psi_{2s}(\vec{k})}{\Omega(\vec{k})} w_{k_3}\ov{w}_{k_4}w_{k_5}\ov{w}_{k_6}\cdot
		\ov{w}_{q_3}w_{q_2} \ov{w}_{q_5}w_{q_4}
	\end{align}
	and
	\begin{align}\label{defS22}
		&\mathcal{S}_{2,2}(w):=\sum_{\Lambda_{2,2}}\chi_N(k_2)^2|w_{k_4}|^2\Big(1-	\chi\Big(\frac{\Omega(\vec{k})}{\lambda(\vec{k})^{\delta_0}}\Big)\Big)\frac{\psi_{2s}(\vec{k})}{\Omega(\vec{k})}w_{k_1}w_{k_3}w_{k_5}\ov{w}_{k_6} \cdot \ov{w}_{q_1}\ov{w}_{q_3}\ov{w}_{q_5}w_{q_4}.
	\end{align}
	where
	\begin{align}\label{defLambda21} 
		\Lambda_{2,1}:=\Big\{(k_1,q_1,\cdots,q_5,k_3,\cdots,k_6):&\sum_{j=1}^5(-1)^{j}q_j+\sum_{i\in\{1,3,4,5,6\}}(-1)^{i-1}k_i=0,\notag \\
		& k_{1}=q_{1},\; \sum_{i\in\{3,4,5,6\} }|k_{i}|\leq |k_{1}|^{\theta}+|k_2|^{\theta},\; \sum_{j\in\{2,3,4,5\} }|q_{j}|\leq |k_{1}|^{\theta}+|k_2|^{\theta}  \Big\},
	\end{align}
	and
	\begin{align}\label{defLambda22} 
		\Lambda_{2,2}:=\Big\{(k_1,q_1,\cdots,q_5,k_3,\cdots,k_6):&\sum_{j=1}^5(-1)^{j}q_j+\sum_{i\in\{1,3,4,5,6 \}}(-1)^{i-1}k_i=0, \notag \\
		& k_{4}=q_{2},\; \sum_{i\in\{1,3,5,6\} }|k_{i}|\leq |k_2|^{\theta}+ |k_{4}|^{\theta},\; \sum_{j\in\{1,3,4,5\} }|q_{j}|\leq |k_2|^{\theta}+ |k_{4}|^{\theta}  \Big\}.
	\end{align}
	
	\begin{tikzpicture}[scale=1]
		\tikzset{level distance=25pt};
		\tikzset{sibling distance=20pt};
		[
		level 1/.style={sibling distance=10pt},
		%  level 1/.style={level post sep=0.5cm},
		level 2/.style={level post sep=10pt}, level distance=10pt,
		%	level 3/.style={sibling distance=2em}, level distance=1cm
		] 
		\draw (1,0) circle (3.0pt);
		\draw (0,0).. controls (0.3,0.3) and (0.8,0.3) .. (0.95,0);
		\node at (0,0) [left] {};
		\node at (1,0) [right] {$k_6$};
		\coordinate (root) {}  [fill]  circle (3.0pt)
		child {  
			[fill]  circle (3.0pt)
			child {  [fill]  circle (3.0pt)
				node[left]{$p_1$} 
				%		child {}
				%		child {}
				%	edge from parent
				%	node[left] {a}
			} 
			child { circle (3.0pt) 
			}
			child { [fill] circle (3.0pt)         
			}
			child {    circle (3.0pt) 
				%	child {}
				%	child {}
			}
			child {  [fill]  circle (3.0pt) 
				%	child {}
				%	child {}
			}
			%	child {
			%	}
		}
		child {    circle (3.0pt) 
			%	child {}
			%	child {}
		}
		child {  [fill]  circle (3.0pt) 
			%	child {}
			%	child {}
		}
		child {    circle (3.0pt) 
			%	child {}
			%	child {}
		}
		child {  [fill]  circle (3.0pt) 
			%	child {}
			%	child {}
		}
		;
		%	\node at (root)[right]{$\mathfrak{r}$};
		\node at (root-1)[left] {$k_1$};
		\node at (root-2)[right] {$k_2$};
		\node at (0,-3) { $\mathcal{S}_{1,1}$: $p_1$ is paired with $k_2$};
	\end{tikzpicture}
	\hspace{2.5cm} %%%%%想將兩個tikzpicture并列，第一個的\end{tikzpicture}命令與第二個\begin{tikzpicture}命令之間不能有換行，只能加入一個行距的命令
	\begin{tikzpicture}[scale=1]
		\tikzset{level distance=25pt};
		\tikzset{sibling distance=20pt}; 
		[
		level 1/.style={sibling distance=2em},
		level 2/.style={sibling distance=0.5em}, level distance=0.5cm,
		%	level 3/.style={sibling distance=2em}, level distance=1cm
		] 
		\draw (1,0) circle (3.0pt);
		\draw (0,0).. controls (0.3,0.3) and (0.8,0.3) .. (0.95,0);
		\node at (0,0) [left] {};
		\node at (1,0) [right] {$k_6$};
		\coordinate (root) {}  [fill]  circle (3.0pt)
		child {  [fill]  circle (3.0pt)
			child {  [fill]  circle (3.0pt)
				%		child {}
				%		child {}
				%	edge from parent
				%	node[left] {a}
			}
			child { circle (3.0pt) 
				node[left]{$p_2$}
			}
			child { [fill] circle (3.0pt) 
			}
			child {  circle (3.0pt)
			}
			child { [fill] circle (3.0pt)         
			}
			%	child {
			%	}
		}
		child {    circle (3.0pt) 
			%	child {}
			%	child {}
		}
		child { [fill] circle (3.0pt)
		}
		child {  circle (3.0pt)
		}
		child {  [fill]  circle (3.0pt) 
			%	child {}
			%	child {}
		}
		;
		%	\node at (root)[right]{$\mathfrak{r}$};
		\node at (root-1)[left] {$k_1$};
		\node at (root-3)[right] {$k_3$};
		\node at (0,-3) { $\mathcal{S}_{1,2}$: $p_2$ is paired with $k_3$};	
	\end{tikzpicture}

	\begin{tikzpicture}[scale=1]
		\tikzset{level distance=25pt};
		\tikzset{sibling distance=20pt}; 
		[
		level 1/.style={sibling distance=3em},
		level 2/.style={sibling distance=1.5em}, level distance=1cm,
		%	level 3/.style={sibling distance=2em}, level distance=1cm
		] 
		\draw (1,0) circle (3.0pt);
		\draw (0,0).. controls (0.3,0.3) and (0.8,0.3) .. (0.95,0);
		\node at (0,0) [left] {};
		\node at (1,0) [right] {$k_6$};
		\coordinate (root) {}  [fill]  circle (3.0pt)
		child {  [fill]  circle (3.0pt) 
			node [left] {$k_1$}
			%	child {}
			%	child {}
		}
		child {    circle (3.0pt)
			child {    circle (3.0pt)
				node [right] {$q_1$}
				%		node[left]{$\mathfrak{l}''$}
				%		child {}
				%		child {}
				%	edge from parent
				%	node[left] {a}
			}
			child { [fill] circle (3.0pt) 
			}
			child {  circle (3.0pt)         
			}
			child { [fill] circle (3.0pt)
			}
			child {  circle (3.0pt) 
			}
			%	child {
			%	}
		}
		child { [fill] circle (3.0pt)
		}
		child {  circle (3.0pt) 
		}
		child {  [fill]  circle (3.0pt) 
			%	child {}
			%	child {}
		}
		;
		%	\node at (root)[right]{$\mathfrak{r}$};
		%	\node at (root-1)[left] {$\mathfrak{n}_0$};
		%	\node at (root-2)[right] {$\mathfrak{l}'$};
		\node at (root-2)[left] {$k_2$};
		\node at (0,-3) { $\mathcal{S}_{2,1}$: $q_1$ is paired with $k_1$};
	\end{tikzpicture}
	\hspace{2.5cm} %%%%%想將兩個tikzpicture并列，第一個的\end{tikzpicture}命令與第二個\begin{tikzpicture}命令之間不能有換行，只能加入一個行距的命令
	\begin{tikzpicture}[scale=1] 
		\tikzset{level distance=25pt};
		\tikzset{sibling distance=20pt}; 
		[
		level 1/.style={sibling distance=3em},
		level 2/.style={sibling distance=1.5em}, level distance=1cm,
		%	level 3/.style={sibling distance=2em}, level distance=1cm
		] 
		\draw (1,0) circle (3.0pt);
		\draw (0,0).. controls (0.3,0.3) and (0.8,0.3) .. (0.95,0);
		\node at (0,0) [left] {};
		\node at (1,0) [right] {$k_6$};
		\coordinate (root) {}  [fill]  circle (3.0pt)
		child {  [fill]  circle (3.0pt) 
			%	child {}
			%	child {}
		}
		child {    circle (3.0pt)
			child {    circle (3.0pt)
				%		node[left]{$\mathfrak{l}''$}
				%		child {}
				%		child {}
				%	edge from parent
				%	node[left] {a}
			}
			child { [fill] circle (3.0pt) 
				node[left] {$q_2$}
			}
			child {  circle (3.0pt)         
			}
			child { [fill] circle (3.0pt)
			}
			child {  circle (3.0pt) 
			}
			%	child {
			%	}
		}
		child { [fill] circle (3.0pt)
		}
		child {  circle (3.0pt) 
			node[right] {$k_4$}
		}
		child {  [fill]  circle (3.0pt) 
			%	child {}
			%	child {}
		}
		;
		%	\node at (root)[right]{$\mathfrak{r}$};
		%	\node at (root-1)[left] {$\mathfrak{n}_0$};
		%	\node at (root-2)[right] {$\mathfrak{l}'$};
		\node at (root-2)[left] {$k_2$};
		\node at (0,-3) { $\mathcal{S}_{2,2}$: $q_2$ is paired with $k_4$};
	\end{tikzpicture} 

\begin{center}
	\begin{minipage}{0.8\textwidth}  % Adjust the width as needed
		\textbf{Figure 1:} Configurations of trees with two generations. The filled nodes have a $+$ sign, indicating the absence of a conjugate bar of the Fourier mode, while the unfilled nodes have a $-$ sign, signifying the presence of a conjugate bar of the Fourier mode. Each parent node generates five children nodes, displaying alternating signs. Up to symmetry, there are four distinct pairings across the two generations, giving the contributions $\mathcal{S}_{i,j}, i,j=1,2$.
	\end{minipage}
\end{center}
\vspace{0.3cm}

	By symmetry, we have
	\begin{align}\label{defR13} 
		\mathcal{R}_1(w)=9\mathcal{S}_{1,1}(w)+4\mathcal{S}_{1,2}(w)+\mathcal{R}_{1,3}(w),
	\end{align}
	and
	\begin{align}\label{defR23} 
		\mathcal{R}_{2}(w)=9\mathcal{S}_{2,1}(w)+4\mathcal{S}_{2,2}(w)+\mathcal{R}_{2,3}(w),
	\end{align}
	where in the expression of remainders $\mathcal{R}_{1,3}(w)$ 
	we have either $|k_{(3)}|\gtrsim |k_{(1)}|^{\theta}$ or $|k_{(3)}|\lesssim |k_{(1)}|^{\theta}$ and the dominating frequencies are either non-paired or paired within the same generation.	
	Here $k_{(1)}\cdots,k_{(10)}$ is a rearrangement of leaves $p_1,p_2,p_3,p_4,p_5,k_2,k_3,k_4,k_5,k_6$ such that $|k_{(1)}|\geq |k_{(2)}|\geq \cdots \geq |k_{(10)}|$.
	We define similarly the remainder $\mathcal{R}_{2,3}(w)$.
	More precisely, we distinguish three different types in $\mathcal{R}_{1,3}(w)$ (and  in $\mathcal{R}_{2,3}(w)$) with the corresponding constraints in the sum
	$$ \sum_{\substack{k_1-k_2+k_3-k_4+k_5-k_6=0\\ k_1=p_1-p_2+p_3-p_4+p_5\\}}(\cdots):
	$$
	\begin{itemize}
		\item Type A: $\sum_{j=3}^{10}|k_{(j)}|>|k_{(1)}|^{\theta}+|k_{(2)}|^{\theta}$.
		\item Type B: $\sum_{j=3}^{10}|k_{(j)}|\leq |k_{(1)}|^{\theta}+|k_{(2)}|^{\theta}$ and $\{k_{(1)},k_{(2)}\}\subset \{k_2,k_3,k_4,k_5,k_6\}$ or $\{k_{(1)},k_{(2)}\}\subset \{p_1,p_2,p_3,p_4,p_5\}$.
		\item Type C: $\sum_{j=3}^{10}|k_{(j)}|\leq |k_{(1)}|^{\theta}+|k_{(2)}|^{\theta}$, $k_{(1)}\neq k_{(2)}$ and $$k_{(1)}\in\{k_2,k_3,k_4,k_5,k_6\},\quad k_{(2)}\in\{p_1,p_2,p_3,p_4,p_5\}$$ or $$k_{(2)}\in\{k_2,k_3,k_4,k_5,k_6\},\quad k_{(1)}\in\{p_1,p _2,p_3,p_4,p_5\}.
		$$
	\end{itemize}
	Recall that we have to estimate the $L^p(d\mu_s)$ norm of
	$$Q_{s,N}(w)=-\frac{1}{6}\Im\mathcal{R}_0(w)+\frac{1}{2}\Im\mathcal{R}_1(w)-\frac{1}{2}\Im\mathcal{R}_2(w).
	$$
	In Section \ref{energyI}, we estimate the first generation contribution $\mathcal{R}_0(w)$, and in Section \ref{Section:singular}, we estimate the pairing contributions
	$$ \Im(\mathcal{S}_{1,j}-\mathcal{S}_{2,j}),\quad j=1,2. 
	$$ 
	Finally in Section \ref{Section:rest}, we finish the estimate for remainders in the second generation $\mathcal{R}_{1,3}(w),\mathcal{R}_{2,3}(w)$.
%%%%%%%%%%%%%%%%%%%%%%%%%%%%%%%%%%%%%%%%%%%%%%%%%%%%%%%%%%%%%

	\section{Energy estimate I: the first generation}\label{energyI}
Denote
\begin{align}\label{R1} 
	&\mathcal{R}(w):=\sum_{k_1-k_2+\cdots-k_6=0 }\Big(1-	\chi\Big(\frac{\Omega(\vec{k})}{\lambda(\vec{k})^{\delta_0}}\Big)\Big) \frac{\psi_{2s}(\vec{k})}{\Omega(\vec{k})}w_{k_1}\ov{w}_{k_2}\cdots \ov{w}_{k_6},\\ 
	&\mathcal{R}_0(w):=\sum_{k_1-k_2+\cdots-k_6=0}
	\chi\Big(\frac{\Omega(\vec{k})}{\lambda(\vec{k})^{\delta_0}}\Big)
	\psi_{2s}(\vec{k})w_{k_1}\ov{w}_{k_2}\cdots \ov{w}_{k_6}.
\end{align}

\begin{prop}\label{firstgeneration} 
	Assume that $\delta_0<\frac{2}{3}$. There exists $\beta\in(0,1)$, such that for any $R>0$ and $p\in[2,\infty)$ we have
	\begin{align*}
		\|\mathbf{1}_{B_R^{H^{\sigma}}}(w)\mathcal{R}(w)\|_{L^p(\dd\mu_s)}+\|\mathbf{1}_{B_R^{H^{\sigma}}}(w)\mathcal{R}_0(w)\|_{L^p(\dd\mu_s)}\leq C(R)p^{\beta}.
	\end{align*}
\end{prop}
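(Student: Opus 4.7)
The plan is to prove both bounds in parallel through a common dyadic decomposition, a conditional Wiener chaos argument, and the counting lemma.

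I would first decompose each $k_j$ dyadically, $|k_j|\sim N_j$, and sort $N_{(1)}\geq\cdots\geq N_{(6)}$. The constraint $k_1-k_2+\cdots-k_6=0$ forces $N_{(1)}\sim N_{(2)}$. Using Lemma~\ref{monto_gordo} together with the cutoff $\chi(\Omega/\lambda^{\delta_0})$ for $\mathcal R_0$ (giving $|\Omega|\lesssim N_{(1)}^{\delta_0}$), and with its complement together with $|\Omega|^{-1}\lesssim N_{(1)}^{-\delta_0}$ for $\mathcal R$, a uniform multiplier bound
\[
|c(\vec k)|\lesssim N_{(1)}^{2s-2}\big(N_{(1)}^{\delta_0}+N_{(3)}^{2}\big)
\]
holds on each dyadic block, with an extra saving $N_{(1)}^{-\delta_0}$ in the $\mathcal R$ case. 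Hence the two estimates are essentially parallel, with $\mathcal R_0$ being the harder one.

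For a fixed dyadic block, the key step is to apply Lemma~\ref{conditionalWiner} conditionally on the lower-frequency Gaussians, so that only the highest-scale Gaussian factor contributes to the chaos order, while the lower frequencies are absorbed into a conditional coefficient controlled deterministically via the $\mathbf{1}_{B_R^{H^\sigma}}$-cutoff and the pointwise estimate $\sum_{|k|\sim N}|w_k|^{2}\leq R^{2}N^{-2\sigma}$. The resulting conditional Wiener chaos is of order one, producing a $p^{1/2}$ loss, i.e.\ $\beta=\tfrac12<1$. The sum over $\vec k$ subject to the linear constraint and the integer-valued localization of $\Omega$ is then controlled by Lemma~\ref{threevectorcounting}: at most $N_{(2)}^{2}\prod_{j\geq 3}N_{(j)}^{3}$ configurations per level set of $\Omega$, combined with $\lesssim N_{(1)}^{\delta_0}$ admissible integer values of $\Omega$ for $\mathcal R_0$. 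Putting together the multiplier, the counting factor, the Gaussian variance $\sim N^{-2s}$ on the still-random modes, and the $H^\sigma$-decay $\sim N^{-\sigma}$ on the extracted ones, summability over $\vec N$ follows, provided $s$ is sufficiently large; $s\geq 10$, $\sigma$ close to $s-\tfrac32$, and $\delta_0<\tfrac23$ leave a comfortable margin.

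The main obstacle is the delicate numerical balance required simultaneously for $\beta<1$ and for convergence of the dyadic tail. A direct six-linear Wiener chaos would lose $p^{3}$, far too much, while a purely deterministic $H^\sigma$-estimate instead diverges in the dyadic sum because the $H^\sigma$-decay is weaker than the Gaussian decay by $s-\sigma\approx\tfrac32$ per mode. The conditional-chaos strategy threads the needle by replacing \emph{just enough} low-frequency Gaussians by deterministic factors so that the chaos order drops to one without overloading the dyadic sum; the assumption $s\geq 10$ is what provides the slack. A secondary subtlety is the $N_{(3)}^{2}$-branch of the multiplier, dangerous when $N_{(3)}\sim N_{(1)}$, which is absorbed via a careful use of Lemma~\ref{threevectorcounting} on the middle frequencies. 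The case of $\mathcal R$ is strictly easier thanks to the bonus factor $N_{(1)}^{-\delta_0}$ coming from $1/|\Omega|$.
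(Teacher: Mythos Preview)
Your overall architecture (dyadic decomposition, conditional Wiener chaos on the top scale, counting via Lemma~\ref{threevectorcounting}) matches the paper's, but two concrete points are not right.

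\textbf{The chaos order is two, not one.} The constraint $k_1-k_2+\cdots-k_6=0$ forces $N_{(1)}\sim N_{(2)}$, so after conditioning on $\mathcal B_{\ll N_{(1)}}$ there remain \emph{two} random factors $w_{k_{(1)}},w_{k_{(2)}}$ at the top scale. These cannot be separated: they live at the same dyadic scale, so neither is $\mathcal B_{\ll N_{(1)}}$-measurable. Hence Lemma~\ref{conditionalWiner} gives $p$, not $p^{1/2}$, and you do not get $\beta=\tfrac12$ directly. The paper fixes this by noting that the Wiener-chaos bound comes with a negative power $N_{(1)}^{-(1-3\delta_0/2)}$, and then \emph{interpolates} with the crude deterministic bound $|\mathrm{II}_{\vec N}|\lesssim N_{(1)}^{2(s-\sigma)}R^6$ to trade the full power of $p$ for $p^{\beta}$ with $\beta<1$. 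This interpolation step is missing from your proposal and is exactly what produces the $\beta<1$ required downstream in the proof of Lemma~\ref{Quasi-N}.

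\textbf{The regime $N_{(3)}\sim N_{(1)}$ needs its own treatment.} You flag it as a ``secondary subtlety'' absorbed by Lemma~\ref{threevectorcounting}, but in that regime at least three modes sit at the top scale, so the conditional chaos would be of order $\geq 3$ and the balance you sketch breaks. The paper avoids this by splitting at $|k_{(3)}|=|k_{(1)}|^{\alpha}$ with $\alpha<1$ close to $1$: the part $|k_{(3)}|>|k_{(1)}|^{\alpha}$ is handled \emph{purely deterministically} (Cauchy--Schwarz plus the $H^\sigma$ decay from the third mode suffices when $(2+\alpha)\sigma>2s+\tfrac32$), while only the complementary part $|k_{(3)}|\leq |k_{(1)}|^{\alpha}$ is fed into the conditional-chaos argument, where the separation of scales is now genuine. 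Without this split your scheme does not close.
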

\begin{proof}
Before proceeding to the estimates, we first observe that without loss of generality, we may assume that there is no pairing between frequencies with different signatures in the sum  defining $\mathcal{R}_0(w)$ or $\mathcal{R}(w)$. Indeed, if this is the case, say $k_1=k_2$ are paired, then the resonant function will degenerate to $|k_3|^2-|k_4|^2+|k_5|^2-|k_6|^2$ and the energy weight $\psi_{2s}(\vec{k})$ will degenerate to $|k_3|^{2s}-|k_4|^{2s}+|k_5|^{2s}-|k_6|^{2s}$. Therefore, this pairing contribution in $\mathcal{R}_0(w)$ or $\mathcal{R}(w)$ reduces to some power of $\|w\|_{L^2}^2$ times a similar term with two less degrees of homogeneity\footnote{For over pairing contributions, the over-paired part can be controlled by a power of $\|w\|_{L^2}^2$.}, and the  treatment of such a reduced term is similar (simpler) than $\mathcal{R}_0(w)$ or $\mathcal{R}(w)$. So in the sequel of the proof, we implicitly assume that there is no pairing between frequencies with different signatures in all the sums.
	
$\bullet$ {\bf Estimate of $\mathcal{R}_0(w)$}:
	
Pick $\alpha\in(0,1)$, close enough to $1$, we split $\mathcal{R}_0(w)$ as $\mathrm{I}+\mathrm{II}$, where
$$ \mathrm{I}:=\sum_{\substack{ |k_{(3)}|>|k_{(1)}|^{\alpha} \\k_1-k_2+\cdots-k_6=0} }
\chi\Big(\frac{\Omega(\vec{k})}{\lambda(\vec{k})^{\delta_0}}\Big)\psi_{2s}(\vec{k})w_{k_1}\ov{w}_{k_2}\cdots \ov{w}_{k_6}
$$
and
$$ \mathrm{II}:=\sum_{\substack{ |k_{(3)}|\leq |k_{(1)}|^{\alpha} \\k_1-k_2+\cdots-k_6=0} }
\chi\Big(\frac{\Omega(\vec{k})}{\lambda(\vec{k})^{\delta_0}}\Big)
\psi_{2s}(\vec{k})w_{k_1}\ov{w}_{k_2}\cdots \ov{w}_{k_6}.
$$
To estimate I, we only exploit deterministic analysis. The order of $w_{k_j},\ov{w}_{k_j}$ plays no significant role in the analysis. Therefore,  without loss of generality, we assume that in the sum, $|k_1|\geq |k_2|\geq |k_3|\geq |k_4|\geq |k_5|\geq |k_6|$. Taking the absolute value in the sum, we have
	\begin{align*}
		\mathrm{I}\lesssim & \sum_{\substack{|k_3|>|k_1|^{\alpha}\\
				k_1-k_2+\cdots-k_6=0,
		}} 
		\mathbf{1}_{|\Omega(\vec{k})|\lesssim |k_1|^{\delta_0}}\;\mathbf{1}_{|k_1|\geq|k_2|\geq \cdots\geq |k_6|}\cdot |k_1|^{2s-2}(|k_3|^2+|\Omega(\vec{k})|)|w_{k_1}\cdots w_{k_6}|\\
		\lesssim &\sum_{\substack{N_1\geq N_2\geq\cdots N_6\\ N_3\gtrsim N_1^{\alpha} }}\mathrm{I}_{N_1,\cdots N_6},
	\end{align*}
	where the summations are performed on the dyadic values of $N_1,\cdots N_6$ and
$$ 
\mathrm{I}_{N_1,\cdots N_6}:=\sum_{\substack{|k_3|>|k_1|^{\alpha}\\
		k_1-k_2+\cdots-k_6=0,
}} 
	N_1^{2s-2}N_3^2\cdot \mathbf{1}_{|\Omega(\vec{k})|\lesssim N_1^{\delta_0}}\cdot  \prod_{j=1}^6\mathbf{1}_{|k_j|\sim N_j}|w_{k_j}|,
$$
provided that  $\alpha>\frac{1}{3}$ thanks to the restriction $0<\delta_0<\frac{2}{3}$. Using the Cauchy-Schwarz inequality in the $k_1, k_2$ summations, we can write
$$
\mathrm{I}_{N_1,\cdots N_6}\lesssim N_1^{2s-2}N_3^2 \| P_{N_1} w\|_{L^2} \| P_{N_2} w\|_{L^2}\prod_{j=3}^6\big(\sum_{k_j\in\Z} \mathbf{1}_{|k_j|\sim N_j}|w_{k_j}|\big),
$$
where $P_N$ is the frequency projector to $|k|\sim N$.  Therefore, for  $N_3\gtrsim N_1^{\alpha}$  and $w\in B_R^{H^{\sigma}} $, we have a crude estimate
 $$
\mathrm{I}_{N_1,\cdots N_6}\lesssim  R^6\, N_1^{2s-2}N_3^2 \, N_1^{-2\sigma} N_3^{\frac{3}{2}-\sigma}
\lesssim_R N_1^{\frac{3}{2}+2s-2\sigma}N_1^{-\alpha\sigma}
\,,
$$
which is conclusive as far as $(2+\alpha)\sigma>2s+\frac{3}{2}$.  The last restriction is easily satisfied by taking $\alpha$ close to $1$ as far as $s>\frac{15}{2}$.
	
Next, we estimate $\mathrm{II}$. We decompose II dyadically as
	$\sum_{N_1,\cdots, N_6}\mathrm{II}_{N_1,\cdots,N_6}$, where
	\begin{align*}
		\mathrm{II}_{N_1,\cdots,N_6}:=\sum_{\substack{ |k_{(3)}|\leq |k_{(1)}|^{\alpha} \\k_1-k_2+\cdots-k_6=0} }
		\chi\Big(\frac{\Omega(\vec{k})}{\lambda(\vec{k})^{\delta_0}}\Big)
		\psi_{2s}(\vec{k})w_{k_1}\ov{w}_{k_2}\cdots \ov{w}_{k_6}\prod_{j=1}^6\mathbf{1}_{|k_j|\sim N_j}.
	\end{align*}
	For this contribution, we mainly rely on the Wiener chaos estimates. 
	
	Without loss of generality, we assume that $N_1\sim N_2\sim N_{(1)},N_{(3)}=N_3$ (since the analysis of cases $N_1\sim N_3\sim N_{(1)}, N_2\sim N_{(3)}$ or $N_1\sim N_3\sim N_{(1)}, N_5\sim N_{(3)}$ are similar or simpler). 
	
	Denote $\mathcal{B}_{\ll N_1}$ the $\sigma$-algebra generated by Gaussians $(g_{k}(\omega))_{|k|\leq N_1/100}$. Note that we have the constraint $N_3\leq N_1^{\alpha}$ for some $\alpha<1$, close to 1, and we only need to consider the contribution where $N_1$ is sufficiently large so that $N_1^{\alpha}\ll \frac{N_1}{100}$. Consequently, $\mathbf{1}_{|k_j|}w_{k_j}, j=3,4,5,6$ are all $\mathcal{B}_{\ll N_1}$ measurable and the random function 
	$$
	\sum_{|k|\sim N_1}\frac{g_k(\omega)}{\sqrt{1+|k|^{2s}}}\e^{ik\cdot x}$$ is independent of $\mathcal{B}_{\ll N_1}$, we have
	\begin{align*}
		\|\mathrm{II}_{N_1,\cdots,N_6}\cdot\mathbf{1}_{B_R^{H^{\sigma} }}(w)\|_{L^p(\dd\mu_s)}\leq &\|\mathrm{II}_{N_1,\cdots,N_6}\cdot\mathbf{1}_{
	B_R^{H^{\sigma}}	
	}(\mathbf{P}_{\leq N_1/100}w)\|_{L^p(\dd\mu_s)}\\
		\leq &\big\| \|\mathrm{II}_{N_1,\cdots,N_6}\|_{L^p(\dd\mu_s|\mathcal{B}_{\ll N_1})}\cdot \mathbf{1}_{B_R^{H^{\sigma} }}(\mathbf{P}_{\leq N_1/100}w)\big\|_{L^{\infty}(\dd\mu_s)},
	\end{align*}
	where $\mathbf{P}_{\leq N_1/100}$ is the frequency projection to $|k|\leq N_1/100$ and $L^p(\dd\mu_s|\mathcal{B}_{\leq N_1/100})$ means the $L^p$ norm conditioned to the $\sigma$-algebra $\mathcal{B}_{\leq N_1/100}$.  
	By the conditional Wiener-chaos estimate (see Lemma \ref{conditionalWiner} ), we have
	\begin{align}\label{WienrchaosTermII} 
		\|\mathrm{II}_{N_1,\cdots,N_6}\|_{L^p(\dd\mu_s|\mathcal{B}_{\ll N_1})}\lesssim &\,\,  p\|\mathrm{II}_{N_1\cdots,N_6}\|_{L^2(\dd\mu_s|\mathcal{B}_{\ll N_1})}\notag \\
		\lesssim & \,\,p (N_1N_2)^{-s}\Big(\sum_{\substack{|k_1|\sim N_1\\|k_2|\sim N_2}}\Big|\!\!\!\sum_{\substack{k_3,k_4,k_5,k_6\\
				k_3-k_4+k_5-k_6=k_2-k_1\\ |\Omega(\vec{k})|\lesssim N_1^{\delta_0} }}\!\!\!\!\!\! \psi_{2s}(\vec{k})w_{k_3}\ov{w}_{k_4}w_{k_5}\ov{w}_{k_6}\prod_{j=3}^6\mathbf{1}_{|k_j|\sim N_j}
		\Big|^2 \Big)^{\frac{1}{2}}.
	\end{align}
	By Cauchy-Schwarz,
	\begin{align*}
		&\sum_{\substack{|k_1|\sim N_1\\|k_2|\sim N_2}}\Big|\!\sum_{\substack{k_3,k_4,k_5,k_6\\
				k_3-k_4+k_5-k_6=k_2-k_1\\ |\Omega(\vec{k})|\lesssim N_1^{\delta_0} }}\!\!\!\!\!\! \psi_{2s}(\vec{k})w_{k_3}\ov{w}_{k_4}w_{k_5}\ov{w}_{k_6}\prod_{j=3}^6\mathbf{1}_{|k_j|\sim N_j}
		\Big|^2\\
		\leq &\Big(\sum_{\substack{|k_1|\sim N_1\\|k_2|\sim N_2}}\sum_{\substack{k_3,k_4,k_5,k_6\\
				k_3-k_4+k_5-k_6=k_2-k_1\\ |\Omega(\vec{k})|\lesssim N_1^{\delta_0} }}\!\!\!\!\!\! |\psi_{2s}(\vec{k})|^2|w_{k_6}|^2\prod_{j=3}^6\mathbf{1}_{|k_j|\sim N_j}
		\Big)\\
		&\hspace{3cm}\times\sup_{|k_1|,|k_2|\sim N}\sum_{k_3-k_4+k_5-k_6=k_2-k_1}|w_{k_3}w_{k_4}w_{k_5}|^2\prod_{j=3}^6\mathbf{1}_{|k_j|\sim N_j}.
	\end{align*}
	Since $|\psi_{2s}(\vec{k})|^2\lesssim N_1^{4(s-1)}(N_3^4+|\Omega(\vec{k})|^2)$, the first sum on the right hand-side can be bounded by (below we implicitly insert the constraint $|k_j|\sim N_j$)
	\begin{align*}
		\sum_{|k_6|\sim N_6}|w_{k_6}|^2&\sum_{|\kappa|\lesssim N_1^{\delta_0}}\, \, \sum_{k_1,k_2,k_3,k_4,k_5}N_1^{4(s-1)}(N_3^4+\kappa^2)\,\,\fk{h}_{k^+_1k^-_2k^+_3k^-_4k^+_5k^-_6}\mathbf{1}_{|\Omega(\vec{k})|=\kappa}\\
		\lesssim &N_6^{-2\sigma}\|w\|_{H^{\sigma}}^2N_1^{4(s-1)}\big(N_3^4N_1^{\delta_0}\, N_2^{2}\,(N_3 N_4N_5)^3 +N_1^{3\delta_0}\, N_2^{2}\,(N_3N_4N_5)^3\big)\\
		\lesssim &\|w\|_{H^{\sigma}}^2N_6^{-2\sigma}N_1^{4s-4}(N_3^4N_1^{\delta_0}+N_1^{3\delta_0}) N_2^2 N_3^3 N_4^3N_5^3,
	\end{align*}
where we used Lemma \ref{threevectorcounting}  and recall the notation $\mathfrak{h}_{k^+_1k^-_2k^+_3k^-_4k^+_5k^-_6}$
introduced in \eqref{not:linearconstraint}.
Plugging into \eqref{WienrchaosTermII}, we obtain that
	\begin{align*}
		\|\mathrm{II}_{N_1,\cdots,N_6}\|_{L^p(\dd\mu_s|\mathcal{B}_{\ll N_1})}\lesssim &
		\,\,pN_1^{-1}(N_3^2N_1^{\frac{\delta_0}{2}}+N_1^{\frac{3\delta_0}{2}}) N_3^{\frac 3 2} N_3^{-\sigma}\prod_{j=3}^6\|w_{N_j}\|_{H^{\sigma}}\\
		\lesssim &\,\, p\big(N_1^{\frac{\delta_0}{2}-1}N_3^{\frac{7}{2}-\sigma}+N_1^{\frac{3\delta_0}{2}-1}N_3^{\frac{3}{2}-\sigma}\big)\|w\|_{H^{\sigma}}^4.
	\end{align*} 
	Since $\delta_0<\frac{2}{3}$ and $\sigma>\frac{7}{2}$, the above quantity can be controlled by
	$$ pN_1^{-(1-\frac{3\delta_0}{2})}\|w\|_{H^{\sigma}}^4.
	$$
	Hence
	$$ \|\mathrm{II}_{N_1,\cdots,N_6}\mathbf{1}_{
B_R^{H^{\sigma}}		
}(w)\|_{L^p(\dd\mu_s)}\lesssim pN_{(1)}^{-(1-\frac{3\delta_0}{2})}R^4.
	$$
	Here since we gain a negative power in $N_{(1)}$, by interpolating with the crude deterministic estimate
	$$ |\mathrm{II}_{N_1,\cdots,N_6}\mathbf{1}_{B_R^{H^{\sigma}}	}(w)|\lesssim N_{(1)}^{2s-2\sigma}\|w\|_{H^{\sigma}}^6\leq N_{(1)}^{2(s-\sigma)}R^6,
	$$
	we conclude the estimate for $\mathcal{R}_0(w)$.
	\vspace{0.3cm}
	
	$\bullet$ {\bf Estimate of $\mathcal{R}(w)$}:
	The estimate for $\mathcal{R}(w)$ is similar (simpler) to the estimate for $\mathcal{R}_0(w)$ and we only sketch the proof. Indeed, comparing to the estimate of $\mathcal{R}_0(w)$, the only difference is that the weight $\chi\Big(\frac{\Omega(\vec{k})}{\lambda(\vec{k})^{\delta_0}}\Big)$ is now replaced by 
	$$ \Big(1-\chi\Big(\frac{\Omega(\vec{k})}{\lambda(\vec{k})^{\delta_0}}\Big) \Big)\frac{1}{\Omega(\vec{k})}.
	$$
	We  similarly split $\mathcal{R}(w)$ similarly as $\mathrm{I}'+\mathrm{II}'$,  where
	$$ \mathrm{I}':=\sum_{\substack{ |k_{(3)}|>|k_{(1)}|^{\alpha} \\k_1-k_2+\cdots-k_6=0} }
	\Big(1-\chi\Big(\frac{\Omega(\vec{k})}{\lambda(\vec{k})^{\delta_0}}\Big) \Big)\frac{\psi_{2s}(\vec{k})}{\Omega(\vec{k})}w_{k_1}\ov{w}_{k_2}\cdots \ov{w}_{k_6},
	$$
	$$ \mathrm{II}':=\sum_{\substack{ |k_{(3)}|\leq |k_{(1)}|^{\alpha} \\k_1-k_2+\cdots-k_6=0} }
	\Big(1-\chi\Big(\frac{\Omega(\vec{k})}{\lambda(\vec{k})^{\delta_0}}\Big) \Big)\frac{\psi_{2s}(\vec{k})}{\Omega(\vec{k})}
	w_{k_1}\ov{w}_{k_2}\cdots \ov{w}_{k_6}
	$$
	and we invoke the inequalities  
	$$
\sum_{N_{(1)}^{\delta_0}\lesssim|\kappa|\lesssim N_{(1)}^{2}}\frac{1}{|\kappa|}\lesssim \log(N_{(1)}),\quad
\Big|\frac{\psi_{2s}(\vec{k})}{\Omega(\vec{k})}\Big|\lesssim |k_{(1)}|^{2s-2} (|k_{(3)}|^2+1).
$$
%{\color{blue}{	The estimate of $\mathrm{I}'$ can be realized by the same deterministic analysis as for $\mathrm{I}$ in the estimate of $\mathcal{R}_0(w)$, with the sum $\sum_{|\kappa|\lesssim N_{(1)}^{\delta_0}}$ by
%	$\sum_{N_{(1)}^{\delta}\gtrsim|\kappa|\lesssim N_{(1)}^{2}}\frac{1}{|\kappa|}$, and we will end up with a better bound $$N_{(1)}^{2(s-1-\sigma)-\alpha(\sigma-\frac{5}{2})}\log(N_{(1)})\|w\|_{H_x^{\sigma}}^6
%	$$
%}}
%	when estimating its dyadic component $\mathrm{I}_{N_1,\cdots,N_6}'$. For the estimate of $\mathrm{II}'$, the $L^p(d\mu_s|\mathcal{B}_{\ll N_{(1)}})$ of its dyadic component $\mathrm{II}_{N_1,\cdots,N_6}'$ will be bounded by 
%	$$ p\big(N_{(1)}^{-1+\epsilon}N_{(3)}^{3-\sigma}+N_{(1)}^{\frac{\delta_0}{2}-1+\epsilon}N_{(3)}^{1-\sigma+\epsilon}\big)\|w\|_{H^{\sigma}}^4,
%%	$$
	%which is conclusive within the range of constraints for $\delta_0$ and $\sigma$.
The proof of Proposition~\ref{firstgeneration} is now complete.  
\end{proof}	
%%%%%%%%%%%%%%%%%%%%%%%%%%%%%%%%%%%%%%%%%%%%%%%%%%%%%%%%%%%%%%%%%%%%%%%%%
\section{Energy estimate II: the pairing contributions in the second generation}\label{Section:singular} 
In this section, we estimate the singular contributions. Recall the definition of $\mathcal{S}_{i,j}$ in \eqref{defS11}-\eqref{defS22}. 
	\begin{prop}\label{prop:Sij}
		There exist $C>0$ and $\beta=\beta(\theta,s)\in(0,1)$, such that for $j\in\{1,2\}$, $R\geq 1$ and $p\in[2,\infty)$, we have
		$$ \|\Im\big(\mathcal{S}_{1,j}(w)-\mathcal{S}_{2,j}(w)\big)\mathbf{1}_{B_R^{H^{\sigma}}	(w)}\|_{L^p(\dd\mu_s)}\leq Cp^{\beta}R^{10}.
		$$
	\end{prop}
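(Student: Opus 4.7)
The strategy is to exploit a change of summation indices that reveals a near-identity between $\mathcal{S}_{1,j}$ and $\mathcal{S}_{2,j}$, reducing their difference to a manifestly small quantity which can then be estimated by standard Wiener chaos and counting arguments.

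First I would observe that in $\mathcal{S}_{2,1}$ the substitution $q_j\mapsto p_{7-j}$ for $j=2,3,4,5$ defines a bijection between the parts of $\Lambda_{2,1}$ and $\Lambda_{1,1}$ lying over the common leaves $k_3,\ldots,k_6$; the linear constraint $q_2-q_3+q_4-q_5+k_3-k_4+k_5-k_6=0$ is converted into $-p_2+p_3-p_4+p_5+k_3-k_4+k_5-k_6=0$, and the product $\bar w_{q_3}w_{q_2}\bar w_{q_5}w_{q_4}$ becomes $\bar w_{p_2}w_{p_3}\bar w_{p_4}w_{p_5}$. Under this identification the symbol $\psi_{2s}(\vec k)/\Omega(\vec k)$ is the same function of the same large variables, and the only remaining difference is that $|w_{k_2}|^2$ in $\mathcal{S}_{1,1}$ is replaced by $|w_{k_1}|^2$ in $\mathcal{S}_{2,1}$ (after a trivial renaming of the free large index). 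Hence, letting $\Theta(\vec k):=\bigl(1-\chi(\Omega(\vec k)/\lambda(\vec k)^{\delta_0})\bigr)\psi_{2s}(\vec k)/\Omega(\vec k)$,
\begin{equation*}
\mathcal{S}_{1,1}(w)-\mathcal{S}_{2,1}(w)=\sum_{\Lambda_{1,1}}\chi_N(k_1)^2\,\bigl(|w_{k_2}|^2-|w_{k_1}|^2\bigr)\,\Theta(\vec k)\,w_{k_3}\bar w_{k_4}w_{k_5}\bar w_{k_6}\,\bar w_{p_2}w_{p_3}\bar w_{p_4}w_{p_5},
\end{equation*}
with an analogous identity for $\mathcal{S}_{1,2}-\mathcal{S}_{2,2}$ involving $|w_{k_3}|^2-|w_{k_4}|^2$. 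This is the remarkable cancellation; it eliminates the ``self-paired'' terms whose naive size would be too large to control.

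Next I decompose $|w_{k_2}|^2-|w_{k_1}|^2=\bigl(|w_{k_2}|^2-\mathbb E|w_{k_2}|^2\bigr)-\bigl(|w_{k_1}|^2-\mathbb E|w_{k_1}|^2\bigr)+\bigl(\mathbb E|w_{k_2}|^2-\mathbb E|w_{k_1}|^2\bigr)$. The deterministic piece is controlled by the mean value theorem: since $k_1-k_2=-p_2+p_3-p_4+p_5$ with $\sum_{j\geq 2}|p_j|\lesssim |k_{(1)}|^\theta$, one has $|\mathbb E|w_{k_2}|^2-\mathbb E|w_{k_1}|^2|\lesssim |k_{(1)}|^{\theta-2s-1}$. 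Combined with $|\Theta(\vec k)|\lesssim |k_{(1)}|^{2s-2}+|k_{(1)}|^{2s-2-\delta_0}|k_{(3)}|^2\lesssim |k_{(1)}|^{2s-2}$ from Lemma~\ref{monto_gordo} (using $\theta<\delta_0/2$), this yields an effective coefficient $|k_{(1)}|^{\theta-3}$, a genuine power gain. The remaining multilinear factor is a product of eight small Gaussians; conditioning on the $\sigma$-algebra generated by the low-frequency modes and invoking Lemma~\ref{conditionalWiner} gives a $p^4$ Wiener chaos bound, and the resulting $L^2$ sum is bounded by Lemma~\ref{threevectorcounting} after pairing up the frequencies and summing over $k_2$ at scale $|k_{(1)}|\sim N$; the gain $N^{\theta-3}$ together with the crude volume bounds $N^3$ from the $k_2$-sum and $N^{C\theta}$ from the small-leaf sums yields a convergent estimate.

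The fluctuation pieces $|w_{k_j}|^2-\mathbb E|w_{k_j}|^2$ are elements of the second Wiener chaos independent of the small-frequency Gaussians; multiplied by the 8-fold Gaussian product they live in at most the tenth chaos. I would estimate them by a conditional Wiener chaos argument in the spirit of the proof of Proposition~\ref{firstgeneration}: condition on $\mathcal B_{\ll N_{(1)}}$, apply Lemma~\ref{conditionalWiner} to get a factor $p^5$, reduce to an $L^2(d\mu_s|\mathcal B_{\ll N_{(1)}})$ norm, and bound the resulting multilinear expression by Lemma~\ref{threevectorcounting}. Interpolating the outcome with a crude deterministic bound $\lesssim N_{(1)}^{2(s-\sigma)}R^{10}$ and summing dyadically over $N_{(1)}$ completes the proof for $j=1$; the case $j=2$ is completely parallel, with the bijection replacing $(q_2,q_3,q_4,q_5)\leftrightarrow(p_5,p_4,p_3,p_1)$ suitably adjusted.

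The main obstacle is implementing the reindexing cleanly, because $\Lambda_{1,j}$ and $\Lambda_{2,j}$ involve different labelings and slightly different threshold conditions on the small frequencies (e.g.\ $|k_{(3)}|\leq |k_1|^\theta+|k_2|^\theta$); verifying that these conditions transfer correctly under the bijection, and that no stray lower-order pairings among the remaining eight small Gaussians destroy the anticipated power gain, is the most delicate part. Once the bijection and decomposition are pinned down, the remaining analysis is a now-standard combination of Wiener chaos, the counting estimate, and dyadic interpolation.
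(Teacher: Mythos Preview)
Your bijection for $j=1$ is essentially correct (modulo the $\chi_N(k_1)^2$ versus $\chi_N(k_2)^2$ discrepancy, which the paper tracks but which is harmless). However, you miss one of the two structural cancellations the paper exploits, and for $j=2$ this omission is fatal.

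The paper's proof rests on a \emph{second} cancellation that you do not mention: on $\Lambda_{1,1}$, once $p_1=k_2$ is eliminated, the remaining eight small leaves split into a $k$-block $(k_3,\dots,k_6)$ and a $p$-block $(p_2,\dots,p_5)$ satisfying the \emph{same} linear constraint $k_3-k_4+k_5-k_6=p_2-p_3+p_4-p_5$, so that the product $w_{k_3}\bar w_{k_4}w_{k_5}\bar w_{k_6}\cdot\bar w_{p_2}w_{p_3}\bar w_{p_4}w_{p_5}$ summed over this constraint is $|F(k_2-k_1)|^2$ for a suitable $F$. Hence when one replaces $\Theta(\vec k)$ by its leading symbol $\frac{|k_1|^{2s}-|k_2|^{2s}}{|k_1|^2-|k_2|^2}$ (which depends only on $k_1,k_2$), the resulting expression is \emph{real} and drops out under $\Im$. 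The remainder $\Psi(\vec k)$ carries an extra factor $|k_{(3)}|^2/|\Omega(\vec k)|$ (Lemma~\ref{cancellationerror}), and only then does the paper apply your fluctuation/expectation split and Wiener chaos.

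For $j=2$ your claimed ``analogous identity involving $|w_{k_3}|^2-|w_{k_4}|^2$'' is simply false. On $\Lambda_{1,2}$ the two large frequencies $k_1,k_3$ carry the \emph{same} signature (both $+$), while on $\Lambda_{2,2}$ the large pair $k_2,k_4$ both carry $-$. The natural bijection (swap odd and even $k$-indices) therefore sends $\mathcal{S}_{2,2}$ to the \emph{complex conjugate} $\overline{\mathcal{S}_{1,2}}$, not to a variant with $|w_{k_4}|^2$ in place of $|w_{k_3}|^2$; consequently $\Im(\mathcal{S}_{1,2}-\mathcal{S}_{2,2})=2\,\Im\mathcal{S}_{1,2}$ and there is no difference-type cancellation at all. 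The paper handles $\Im\mathcal{S}_{1,2}$ directly by the real-part mechanism above: the leading symbol is now $\frac{|k_1|^{2s}+|k_3|^{2s}}{|k_1|^2+|k_3|^2}$, again producing $|F|^2$, and crucially $|\Omega(\vec k)|\sim|k_{(1)}|^2$ on $\Lambda_{1,2}$, so the error $\widetilde\Psi$ satisfies $|\widetilde\Psi|\lesssim|k_{(1)}|^{2s-4}|k_{(3)}|^2$, which is small enough for a purely deterministic estimate. Without the real-part cancellation you have no way to control $2\,\Im\mathcal{S}_{1,2}$, so your proposal has a genuine gap in the $j=2$ case. A secondary point: for the conditional Wiener chaos step the eight small leaves are $\mathcal B_{\ll N_{(1)}}$-measurable, so only the two high-frequency factors contribute and the power of $p$ is $p$, not $p^4$ or $p^5$.
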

	
	\begin{rem}\label{Firstcancellation} 
	To explain the difficulty, we remark that the singular contributions $\mathcal{S}_{i,j}(w)$ ($i,j\in\{1,2\}$) prevents us to use Wiener-chaos estimate to gain the square root cancellation. Nevertheless, it turns out that there is an extra cancellation when one takes the imaginary part of $\mathcal{S}_{i,j}(w)$. To understand the hidden cancellation, for $\mathcal{S}_{1,1}(v)$, one can think about the sum is taken over $|k_3|,\cdots,|k_6|,|p_2|,\cdots,|p_5|=O(1)$, then
		$$ \frac{\psi_{2s}(\vec{k})}{\Omega(\vec{k})}\approx \frac{|k_1|^{2s}-|k_2|^{2s}}{|k_1|^2-|k_2|^2}, 
		$$
		and the second sum in the definition of $\mathcal{S}_{1,1}$ is completely decoupled and we have
		\begin{align*}
			\mathcal{S}_{1,1}(w)= &-\sum_{k_1,k_2}\chi_N(k_1)^2|w_{k_2}|^2\frac{|k_1|^{2s}-|k_2|^{2s}}{|k_1|^2-|k_2|^2}\Big|\sum_{\substack{|k_3|+|k_4|+|k_5|+|k_6|\leq |k_2|^{\theta}\\
					k_3-k_4+k_5-k_6=k_2-k_1 } } w_{k_3}\ov{w}_{k_4}w_{k_5}\ov{w}_{k_6}
			\Big|^2\\
			+&\text{ error },
		\end{align*}
		where the main contribution is obviously real.
		\end{rem}
		%%%
\begin{rem}
However, it turns out that the cancellation described in Remark \ref{Firstcancellation} alone is not enough to conclude, as the error term in the formula above is not negligible if we estimate individually $\mathcal{S}_{1,j}(w)$ and $\mathcal{S}_{1,j}(w)$, $j=1,2$. 
What saves us is that these expressions there is some symmetric structure 
%in $\mathcal{S}_{1,1}$ and $\mathcal{S}_{2,1}$ so that $\mathcal{S}_{1,1}(w)-\mathcal{S}_{2,1}(w)$ enjoys 
so that we can exploit some extra probabilistic cancellation and a deterministic smoothing. 
%Besides the cancellations explained in Remark \ref{Firstcancellation},  it turns out that there is a further cancellation for $\Im\big(\mathcal{S}_{1,j}(v)-\mathcal{S}_{2,2}(v)\big)$. 
Let us explain theses points with more details.
%To see this, 
With the identification of  $(q_3,q_2,q_5,q_4)=(p_2,p_3,p_4,p_5)$ (without changing $k_j$), we observe that $\Lambda_{2,1}=\Lambda_{1,1}$ and
	\begin{align}\label{S21new} 
		\mathcal{S}_{2,1}(w)=\sum_{\Lambda_{1,1}}\chi_N(k_2)^2|w_{k_1}|^2\frac{\psi_{2s}(\vec{k})}{\Omega(\vec{k})}\Big(1-\chi\Big(\frac{\Omega(\vec{k})}{\lambda(\vec{k})^{\delta_0} }\Big)\Big) w_{k_3}\ov{w}_{k_4}w_{k_5}\ov{w}_{k_6}\cdot \ov{w}_{p_2}w_{p_3}\ov{w}_{p_4}w_{p_5}.
	\end{align}
	Therefore,
	\begin{align}\label{cancellationS11S21} 
	\Im\mathcal{S}_{1,1}(w)-\Im\mathcal{S}_{2,1}(w) =&\Im\sum_{\Lambda_{1,1}}(\chi_N(k_1)^2|w_{k_2}|^2-\chi_N(k_2)^2|w_{k_1}|^2)\frac{\psi_{2s}(\vec{k})}{\Omega(\vec{k})}\Big(1-\chi\Big(\frac{\Omega(\vec{k})}{\lambda(\vec{k})^{\delta_0} }\Big)\Big)\notag\\
		&\hspace{2.5cm}\times w_{k_3}\ov{w}_{k_4}w_{k_5}\ov{w}_{k_6}\cdot \ov{w}_{p_2}w_{p_3}\ov{w}_{p_4}w_{p_5}.
	\end{align}
	For $j=2$, due to the special position, there is no such cancellation in $\Im\big(\mathcal{S}_{1,2}(w)-\mathcal{S}_{2,2}(w)\big)$. Indeed, 
	\begin{align*}
		\mathcal{S}_{2,2}(w)=&\sum_{k_2,k_4}\chi_N(k_2)^2|w_{k_4}|^2
		\!\!\!\!\!\!\!\!	\sum_{\substack{k_1+k_3+k_5-k_6=k_2+k_4\\
				q_1+q_3+q_5-q_4=k_2+k_4\\
				|k_1|+|k_3|+|k_5|+|k_6|\leq |k_2|^{\theta}+|k_4|^{\theta}\\
				|q_1|+|q_3|+|q_4|+|q_5|\leq |k_2|^{\theta}+ |k_4|^{\theta} 
		} }\!\!\!\!\!\!\!\!\!\!\frac{\psi_{2s}(\vec{k})}{\Omega(\vec{k})}
		\Big(1-\chi\Big(\frac{\Omega(\vec{k})}{\lambda(\vec{k})^{\delta_0}}\Big)\Big) w_{k_1}w_{k_3}w_{k_5}\ov{w}_{k_6}\cdot \ov{w}_{q_1}\ov{w}_{q_3}\ov{w}_{q_5}w_{q_4}
	\end{align*}
	By switching the indices $(k_1,k_3,k_5)$ with $(k_2,k_4,k_6)$ and identifying $(q_1,q_3,q_4,q_5)$ as $(p_1,p_3,p_4,p_5)$ in $\Lambda_{2,2}$, we deduce that
	$$ \mathcal{S}_{2,2}(w)=\ov{\mathcal{S}}_{1,2}(w),
	$$
	where we used the fact that $\frac{\psi_{2s}(\vec{k})}{\Omega(\vec{k})}$ is invariant by the switching of indices $(k_1,k_3,k_5)$ and $(k_2,k_4,k_6)$.  Therefore,
	\begin{align}\label{ImS12S22}
		\Im\mathcal{S}_{1,2}(w)-\Im\mathcal{S}_{2,2}(w)=-2\Im\mathcal{S}_{2,2}(w).
	\end{align} 
	The good news is that in the expression $\Im\mathcal{S}_{1,2}(v)$, we only need to exploit the first cancellation explained in Remark \ref{Firstcancellation}, since the resonant function $\Omega(\vec{k})\approx |k_1|^2+|k_3|^2\sim |k_{(1)}|^2$ has  a significantly larger size which provides a smoothing effect. 
\end{rem}	
%%%%%%%%%%%%%%%%%%%%%%%%%%%%%%%%%%%%%%%
%%%%%%%%%%%%%%%%%
\begin{proof}[Proof of Proposition \ref{prop:Sij}] We separate the analysis for $j=1$ and $j=2$.\\
		
		\noi
		$\bullet$ {\bf Estimate for $j=1$:}
		
		Set
		$$ \Psi(\vec{k}):=\frac{\psi_{2s}(\vec{k})}{\Omega(\vec{k})}\Big(1-\chi\Big(\frac{\Omega(\vec{k})}{\lambda(\vec{k})^{\delta_0}}\Big)\Big)-\frac{|k_1|^{2s}-|k_2|^{2s}}{|k_1|^2-|k_2|^2}\Big(1-\chi\Big(\frac{|k_1|^2-|k_2|^2}{(|k_1|^2+|k_2|^2)^{\delta_0/2}}\Big)\Big).
		$$
		We need an elementary lemma:
		\begin{lem}\label{cancellationerror} 
			On $\Lambda_{1,1}$ defined in \eqref{defLambda11}, for sufficiently large $|k_{(1)}|$, we have
			$$ |\Psi(\vec{k})|\lesssim \frac{|k_{(1)}|^{2s-2}|k_{(3)}|^2}{|\Omega(\vec{k})|}\mathbf{1}_{|\Omega(\vec{k})|\gtrsim |k_{(1)}|^{\delta_0}},
			$$
			where we recall that in the definition of $\Lambda_{1,1}$, $\theta<\frac{\delta_0}{2}$. 
		\end{lem}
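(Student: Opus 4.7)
The plan is to treat $\Psi(\vec{k})$ as a perturbation of zero, exploiting that on $\Lambda_{1,1}$ the frequencies $k_3,\dots,k_6$ are genuinely small compared to $|k_1|\sim|k_2|\sim|k_{(1)}|$: more precisely, $|k_{(3)}|\lesssim|k_{(1)}|^\theta$ with $\theta<\delta_0/2$, so $|k_{(3)}|^2\ll|k_{(1)}|^{\delta_0}$. I would therefore set $B=|k_1|^2-|k_2|^2$, $D=|k_1|^{2s}-|k_2|^{2s}$, $\lambda_0=(|k_1|^2+|k_2|^2)^{1/2}$, $\epsilon=\Omega(\vec{k})-B$ and $\delta=\psi_{2s}(\vec{k})-D$, so that $\Omega(\vec{k})=B+\epsilon$, $\psi_{2s}(\vec{k})=D+\delta$ and $\lambda(\vec{k})^2=\lambda_0^2+O(|k_{(3)}|^2)$. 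The elementary bounds $|\epsilon|\lesssim|k_{(3)}|^2$, $|\delta|\lesssim|k_{(3)}|^{2s}$ and the mean value theorem $|D|\lesssim|k_{(1)}|^{2s-2}|B|$, together with $|k_{(3)}|^{2}\lesssim|k_{(1)}|^{2\theta}\ll|k_{(1)}|^{\delta_0}$, will be the workhorses.

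First I would verify that $\Psi(\vec{k})=0$ whenever $|\Omega(\vec{k})|\le c_0|k_{(1)}|^{\delta_0}$ for a small $c_0$: on this set, the cutoff $1-\chi(\Omega/\lambda^{\delta_0})$ vanishes, and since $|B|\le|\Omega|+|\epsilon|\le c_0|k_{(1)}|^{\delta_0}+O(|k_{(1)}|^{2\theta})\ll\lambda_0^{\delta_0}$ for $|k_{(1)}|$ large enough, the second cutoff $1-\chi(B/\lambda_0^{\delta_0})$ also vanishes. This produces the indicator $\mathbf{1}_{|\Omega(\vec{k})|\gtrsim|k_{(1)}|^{\delta_0}}$ in the conclusion.

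In the remaining regime I would decompose
\begin{align*}
\Psi(\vec{k})=\Big(\tfrac{D+\delta}{B+\epsilon}-\tfrac{D}{B}\Big)\big(1-\chi(\Omega/\lambda^{\delta_0})\big)+\tfrac{D}{B}\big[\chi(B/\lambda_0^{\delta_0})-\chi(\Omega/\lambda^{\delta_0})\big].
\end{align*}
The first summand is treated by the algebraic identity
\[
\frac{D+\delta}{B+\epsilon}-\frac{D}{B}=\frac{\delta B-D\epsilon}{B\,\Omega(\vec{k})},
\]
which combined with $|D/B|\lesssim|k_{(1)}|^{2s-2}$ and $|\delta|\lesssim|k_{(3)}|^{2s}\le|k_{(1)}|^{2s-2}|k_{(3)}|^2$ (since $|k_{(3)}|\le|k_{(1)}|$) yields precisely $|k_{(1)}|^{2s-2}|k_{(3)}|^2/|\Omega(\vec{k})|$. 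For the second summand I would apply the mean value theorem to $\chi$ and the simple estimate
\[
\Big|\tfrac{B}{\lambda_0^{\delta_0}}-\tfrac{\Omega(\vec{k})}{\lambda(\vec{k})^{\delta_0}}\Big|\le\tfrac{|\epsilon|}{\lambda^{\delta_0}}+|B|\Big|\tfrac{1}{\lambda^{\delta_0}}-\tfrac{1}{\lambda_0^{\delta_0}}\Big|\lesssim\tfrac{|k_{(3)}|^2}{|k_{(1)}|^{\delta_0}},
\]
obtained from $|\lambda^{\delta_0}-\lambda_0^{\delta_0}|\lesssim|k_{(3)}|^2|k_{(1)}|^{\delta_0-2}$; then $\chi'$ is supported on $\{|\Omega|\lesssim|k_{(1)}|^{\delta_0}\}$, so on its support $|k_{(1)}|^{-\delta_0}\sim|\Omega(\vec{k})|^{-1}$, giving again the desired bound $|k_{(1)}|^{2s-2}|k_{(3)}|^2/|\Omega(\vec{k})|$.

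The sole technical point — really the reason the lemma is stated this way — is the bookkeeping of the two distinct cutoffs $\chi(\Omega/\lambda^{\delta_0})$ and $\chi(B/\lambda_0^{\delta_0})$. This is where the quantitative assumption $2\theta<\delta_0$ built into $\Lambda_{1,1}$ is crucial: it guarantees that the perturbation $\epsilon$ of $B$ and the perturbation of $\lambda_0^{\delta_0}$ by $\lambda^{\delta_0}$ are strictly subcritical with respect to the cutoff scale $|k_{(1)}|^{\delta_0}$, so that the two thresholds essentially coincide and both the vanishing and the smoothness of $\chi$ can be exploited together.
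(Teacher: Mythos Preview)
Your proof is correct and follows essentially the same route as the paper's: first establish the indicator by showing both cutoffs vanish when $|\Omega(\vec{k})|\ll|k_{(1)}|^{\delta_0}$ (using $|k_{(3)}|^2\ll|k_{(1)}|^{\delta_0}$), then split $\Psi$ into a ``ratio perturbation'' term and a ``cutoff difference'' term, handling the former by the algebraic identity $\frac{D+\delta}{B+\epsilon}-\frac{D}{B}=\frac{\delta B-D\epsilon}{B\Omega}$ and the latter by the mean value theorem for $\chi$ together with the localization of $\chi'$ to $|\Omega|\sim|k_{(1)}|^{\delta_0}$. The paper writes the first term as $-\frac{D}{B}\frac{G}{\Omega}+\frac{F}{\Omega}$ (with $G=\epsilon$, $F=\delta$), which is just your identity expanded out, so the two arguments coincide.
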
				
		\begin{proof}
			Note that on $\Lambda_{1,1}$, 
			$\{k_1,k_2 \}=\{k_{(1)},k_{(2)} \}$ and
			$|k_{(3)}|^2\lesssim |k_{(1)}|^{2\theta}\ll \lambda(\vec{k})^{\delta_0}$. Thanks to the support property of $\chi$,  if $|\Omega(\vec{k})|\ll |k_{(1)}|^{\delta_0}\sim \lambda(\vec{k})^{\delta_0}$, we must have
			$$ \Psi(\vec{k})=-\frac{|k_1|^{2s}-|k_2|^{2s} }{|k_1|^2-|k_2|^2}\Big(1-\chi\Big(\frac{|k_1|^2-|k_2|^2 }{(|k_1|^2+|k_2|^2 )^{\delta_0/2} } \Big)\Big),
			$$
			and $||k_1|^2-|k_2|^2|\gtrsim (|k_1|^2+|k_2|^2)^{\delta_0/2}\sim \lambda(\vec{k})^{\delta_0}$, otherwise $\Psi(\vec{k})=0$. Thus $$|k_{(1)}|^{\delta_0}\lesssim  ||k_1|^2-|k_2|^2|=|\Omega(\vec{k})|-O(|k_{(3)}|^2),$$ which contradicts to the fact that $|k_{(3)}|^2\ll |k_{(1)}|^{\delta_0}$.
		 Therefore, 
			we assume that $|\Omega(\vec{k})|\gtrsim |k_{(1)}|^{\delta_0}$ and consequently $|k_1|\neq |k_2|$ in the sequel.

			Set 
			$$
			G=|k_3|^2-|k_4|^2+|k_5|^2-|k_6|^2, \quad F=|k_3|^{2s}-|k_4|^{2s}+|k_5|^{2s}-|k_6|^{2s}
			$$
			 and write
			\begin{align*}
				\psi_{2s}(\vec{k})=\frac{|k_1|^{2s}-|k_2|^{2s}}{|k_1|^2-|k_2|^2}(\Omega(\vec{k})-G)+F.
			\end{align*}
			Hence
			\begin{align*}
				\Psi(\vec{k})=&\frac{|k_1|^{2s}-|k_2|^{2s}}{|k_1|^2-|k_2|^2}\Big[\Big(1-\frac{G}{\Omega(\vec{k})}\Big)\Big(1-\chi\Big(\frac{\Omega(\vec{k})}{\lambda(\vec{k})^{\delta_0}}\Big)\Big)-\Big(1-\chi\Big(\frac{|k_1|^2-|k_2|^2}{(|k_1|^2+|k_2|^2)^{\delta_0/2}}\Big)\Big) \Big]\\
				+&\frac{F}{\Omega(\vec{k})}\Big(1-\chi\Big(\frac{\Omega(\vec{k})}{\lambda(\vec{k})^{\delta_0}}\Big)\Big).
			\end{align*}
			Since $|F|\lesssim |k_{(3)}|^{2s}, |G|\lesssim |k_{(3)}|^2$ and $$\Big|\frac{|k_1|^{2s}-|k_2|^{2s}}{|k_1|^2-|k_2|^2}\Big|\lesssim |k_{(1)}|^{2s-2},$$
			we deduce that
			\begin{align}\label{mean-value}
				|\Psi(\vec{k})|\lesssim \frac{|k_{(1)}|^{2s-2}|k_{(3)}|^2}{|\Omega(\vec{k})|}\Big(1-\chi\Big(\frac{\Omega(\vec{k})}{\lambda(\vec{k})^{\delta_0}}\Big)\Big)+|k_{(1)}|^{2s-2}\Big|\chi\Big(\frac{\Omega(\vec{k})}{\lambda(\vec{k})^{\delta_0}}\Big)-\chi\Big(\frac{|k_1|^2-|k_2|^2}{(|k_1|^2+|k_2|^2)^{
						\delta_0/2}} \Big) \Big|.
			\end{align}
			The first term on the right hand side of \eqref{mean-value} satisfies the claimed bound. It remains to evaluate the second one. 
			By the mean value theorem, there exists $\alpha\in[0,1]$ such that
			\begin{align*}
				&\chi\Big(\frac{\Omega(\vec{k})}{\lambda(\vec{k})^{\delta_0}}\Big)-\chi\Big(\frac{|k_1|^2-|k_2|^2}{(|k_1|^2+|k_2|^2)^{
						\delta_0/2}} \Big)
				=\chi'(\xi_{\alpha})\Big(\frac{\Omega(\vec{k})}{\lambda(\vec{k})^{\delta_0}}-\frac{|k_1|^2-|k_2|^2}{(|k_1|^2+|k_2|^2)^{\delta_0/2} } \Big),
			\end{align*}
			where
			$$ \xi_{\alpha}=\frac{\Omega(\vec{k})}{\lambda(\vec{k})^{\delta_0}}-\alpha\Big(\frac{\Omega(\vec{k})}{\lambda(\vec{k})^{\delta_0}}-\frac{|k_1|^2-|k_2|^2}{(|k_1|^2+|k_2|^2)^{\delta_0/2} } \Big).
			$$
Thanks to the support properties of  $\chi'$, when the second term on the right hand side of \eqref{mean-value} is non zero, we must have $|\xi_{\alpha}|\sim 1$. In this case, a direct computation yields
			\begin{align*}
				\frac{\Omega(\vec{k})}{\lambda(\vec{k})^{\delta_0}}-\frac{|k_1|^2-|k_2|^2}{(|k_1|^2+|k_2|^2)^{\delta_0/2} }=\frac{\Omega(\vec{k})}{\lambda(\vec{k})^{\delta_0}}\cdot\frac{(|k_1|^2+|k_2|^2)^{\delta_0/2}-\lambda(\vec{k})^{\delta_0}}{(|k_1|^2+|k_2|^2)^{\delta_0/2}}+\frac{G}{(|k_1|^2+|k_2|^2)^{\delta_0/2}}.
			\end{align*}
			Note that $|(|k_1|^2+|k_2|^2)^{\delta_0/2}-\lambda(\vec{k})^{\delta_0}|\lesssim |k_{(3)}|^{\delta_0}$,
			we deduce that 
			$$ \Big|\frac{\Omega(\vec{k})}{\lambda(\vec{k})^{\delta_0}}-\xi_{\alpha}\Big|\lesssim \alpha\frac{|k_{(3)}|^{\delta_0}}{\lambda(\vec{k})^{\delta_0}}\Big|\frac{\Omega(\vec{k})}{\lambda(\vec{k})^{\delta_0}}-\xi_{\alpha}\Big|
			+
			\alpha\frac{|k_{(3)}|^{\delta_0}}{\lambda(\vec{k})^{\delta_0}}|\xi_\alpha|
			+\frac{|G|}{\lambda(\vec{k})^{\delta_0}}.
			$$
			As $|G|\lesssim |k_{(3)}|^{2}\lesssim \lambda_1(\vec{k})^{2\theta}\ll \lambda(\vec{k})^{\delta_0}$ for large enough $|k_{(1)}|$, we deduce that 
			$$ \Big|\frac{\Omega(\vec{k})}{\lambda(\vec{k})^{\delta_0}}-\xi_{\alpha}\Big|\lesssim |k_{(3)}|^2\lambda(\vec{k})^{-\delta_0}\lesssim \lambda(\vec{k})^{-\delta_0+2\theta}\ll 1.
			$$
			Since $|\Omega(\vec{k})|\gtrsim |k_{(1)}|^{\delta_0}$, the second term on the right hand side of \eqref{mean-value} is bounded by
			$$ \mathbf{1}_{|\Omega(\vec{k})|\sim |k_{(1)}|^{\delta_0}}\cdot \frac{|k_{(1)}|^{2s-2}|k_{(3)}|^2 }{\lambda(\vec{k})^{\delta_0}}\sim \mathbf{1}_{|\Omega(\vec{k})|\sim |k_{(1)}|^{\delta_0}}\cdot \frac{|k_{(1)}|^{2s-2}|k_{(3)}|^2 }{|\Omega(\vec{k})| }.
			$$
			This completes the proof of Lemma \ref{cancellationerror}.
		\end{proof}		
The key observation is that
		\begin{align*}
			&\sum_{\Lambda_{1,1} }(\chi_N(k_1)^2|w_{k_2}|^2-\chi_N(k_2)^2|w_{k_1}|^2)\Big[\frac{\psi_{2s}(\vec{k})}{\Omega(\vec{k})}\Big(1-\chi\Big(\frac{\Omega(\vec{k})}{\lambda(\vec{k})^{\delta_0}}\Big) \Big)-\Psi(\vec{k})\Big] \\ 
			&\hspace{2cm}\times w_{k_3}\ov{w}_{k_4}w_{k_5}\ov{w}_{k_6}\cdot
			\ov{w}_{p_2}w_{p_3}\ov{w}_{p_4} w_{p_5}\\
			=&\sum_{k_1,k_2}(\chi_N(k_1)^2|w_{k_2}|^2-\chi_N(k_2)^2|w_{k_1}|^2)\frac{|k_1|^{2s}-|k_2|^{2s}}{|k_1|^2-|k_2|^2}\Big(1-\chi\Big(\frac{|k_1|^2-|k_2|^2}{(|k_1|^2+|k_2|^2)^{\delta_0/2}}\Big)\Big)\\
			&\hspace{1cm}\times\Big|\sum_{\substack{k_3-k_4+k_5-k_6=k_2-k_1\\
					|k_3|+|k_4|+|k_5|+|k_6|\leq |k_1|^{\theta}+|k_2|^{\theta}  } } w_{k_3}\ov{w}_{k_4}w_{k_5}\ov{w}_{k_6} \Big|^2
		\end{align*}
		is real-valued and it disappears when taking the imaginary part. 
		
		Therefore it suffices to show that there exists $\beta=\beta(s,\theta,\delta_0)\in (0,1)$, such that
		\begin{align}
			\|J(w)\mathbf{1}_{\|w\|_{H^{\sigma}}\leq  R}\|_{L^p(\dd\mu_s)}\lesssim p^{\beta}R^{10}.
		\end{align}
		where
		\begin{align}\label{Jdefinition}
			J(w):=\sum_{\Lambda_{1,1}}\Psi(\vec{k})(\chi_N(k_1)^2|w_{k_2}|^2-\chi_N(k_2)^2|w_{k_1}|^2) w_{k_3}\ov{w}_{k_4}w_{k_5}\ov{w}_{k_6} \cdot
			\ov{w}_{p_2}w_{p_3}\ov{w}_{p_4} w_{p_5}.
		\end{align} 
		Since in the above expression, the contribution of $k_1=k_2$ is zero, below we always implicitly assume that $k_1\neq k_2$.

		For dyadic numbers $N_1,N_2,N_3,N_4,N_5,N_6,M_2,M_3,M_4,M_5$, we decompose accordingly $w_{k_j}^{N_j}=w_{k_j}\mathbf{1}_{|k_j|\sim N_j}$ and $w_{p_i}^{M_i}=v_{p_i}\mathbf{1}_{|p_i|\sim M_i}$. It suffices to show that
		\begin{align}\label{dyadicJbound}
			\|J_{N_1,\cdots,N_6;M_2,\cdots,M_5}(w)\mathbf{1}_{\|w\|_{H^{\sigma}}\leq R}\|_{L^p(\dd\mu_s)}\lesssim p^{\beta}N_{(1)}^{-\gamma}R^{10}
		\end{align}
	for some $\beta\in(0,1)$ and $\gamma>0$,
		where $J_{N_1,\cdots,N_6;M_2,\cdots,M_5}$ is the same expression as $J(w)$ by replacing the inputs $w_{k_j},w_{p_i}$ to $w_{k_j}^{N_j},w_{p_i}^{M_i}$.
		By definition of $\Lambda_{1,1}$, we have $N_1\sim N_2$ and $$N_3+\cdots+N_6+M_2+\cdots+M_5\lesssim N_1^{\theta}.$$

		By Lemma \ref{cancellationerror} and the fact that $|\Omega(\vec{k})|\gtrsim N_{(1)}^{\delta_0}>N_{(1)}^{2\theta}\gtrsim N_{(3)}^2$, a crude deterministic estimate leads to
		\begin{align*}
			|J_{N_2,\cdots,M_5}(w)|\lesssim &\sum_{\Lambda_{1,1}}\frac{N_1^{2(s-1)}N_{(3)}^2 }{|\Omega(\vec{k})|}\mathbf{1}_{k_1\neq k_2}(|w^{N_2}_{k_2}|^2+|w^{N_1}_{k_1}|^2)|w^{N_3}_{k_3}\cdots w^{N_6}_{k_6}|\cdot |w^{M_2}_{p_2}\cdots w^{M_5}_{p_5}|\\
			\lesssim &
			\sum_{\Lambda_{1,1}}N_1^{2(s-1)} \mathbf{1}_{k_1\neq k_2}(|w^{N_2}_{k_2}|^2+|w^{N_1}_{k_1}|^2)|w^{N_3}_{k_3}\cdots w^{N_6}_{k_6}|\cdot |w^{M_2}_{p_2}\cdots w^{M_5}_{p_5}|,
		\end{align*}
		and the right hand side ca be bounded by
		\begin{align*} 
			&N_1^{2(s-1)} (\|w_{k_1}^{N_1}\|_{l^2}^2+\|w_{k_2}^{N_2}\|_{l^2}^2)\|w_{k_3}^{N_3}\|_{l^1}\cdots\|w_{k_6}^{N_6}\|_{l^1}\|w_{p_2}^{M_2}\|_{l^1}\cdots\|w_{p_5}^{M_5}\|_{l^1}\\ \lesssim &N_1^{2(s-1)}N_1^{-2\sigma}(\|w_{k_1}^{N_1}\|_{h^{\sigma}}^2+\|w_{k_2}^{N_2}\|_{h^{\sigma}}^2)\|w_{k_3}^{N_3}\|_{h^{\sigma}}\cdots\|w_{p_5}^{M_5}\|_{h^{\sigma}}\cdot(N_3\cdots M_5)^{-\sigma+\frac{3}{2}}\\
			\lesssim & N_1^{2(s-1-\sigma)}(N_3\cdots M_5)^{-\sigma+\frac{3}{2}}\|w\|_{H^{\sigma}}^{10}.
		\end{align*}
		Therefore, we obtain the first bound
		\begin{align}\label{Jbounddeterministic} 
			|J_{N_1,\cdots,M_5}(w)|\mathbf{1}_{\|w\|_{H^{\sigma}}\leq R}\lesssim N_1^{2(s-1-\sigma)}
			(N_3\cdots M_5)^{-(\sigma-\frac{3}{2})} R^{10}.
		\end{align}
		As $\sigma<s-\frac{3}{2}$, we need to improve the above bound using the Wiener chaos estimate. We further split
		$$ J_{N_1,\cdots,M_5}(w):=\widetilde{J}_{N_1,\cdots,M_5}(w)+R_{N_1,\cdots,M_5}(w),
		$$
		where
		\begin{align}\label{tildeJ} 
			\widetilde{J}_{N_1,\cdots,M_5}(w):=\sum_{\Lambda_{1,1}}\Psi(\vec{k})&\Big[\chi_N(k_1)^2\big(|w^{N_2}_{k_2}|^2-\frac{1}{1+|k_2|^{2s} }   
					\big)-\chi_N(k_2)^2\big(|w^{N_1}_{k_1}|^2-
					\frac{1}{1+|k_1|^{2s}}
					\big)\Big]\notag \\ \times& w^{N_3}_{k_3}\ov{w}^{N_4}_{k_4}w^{N_5}_{k_5}\ov{w}^{N_6}_{k_6}  \cdot
			\ov{w}^{M_2}_{p_2}w^{M_3}_{p_3}\ov{w}^{M_4}_{p_4} w^{M_5}_{p_5},
		\end{align}
		and
		\begin{align*}
			R_{N_1,\cdots,M_5}:= \sum_{\Lambda_{1,1}}\Psi(\vec{k})\Big(\frac{\chi_N(k_1)^2}{
				1+|k_2|^{2s}
			}-\frac{\chi_N(k_2)^2}{
				1+|k_1|^{2s}  
			}\Big) 
			w^{N_3}_{k_3}\ov{w}^{N_4}_{k_4}w^{N_5}_{k_5}\ov{w}^{N_6}_{k_6}  \cdot
			\ov{w}^{M_2}_{p_2}w^{M_3}_{p_3}\ov{w}^{M_4}_{p_4} w^{M_5}_{p_5}.
			%v_{k_3}\ov{v}_{k_4}v_{k_5}\ov{v}_{k_6} \cdot \ov{v}_{p_2}v_{p_3}\ov{v}_{p_4} v_{p_5}. 	
		\end{align*} 
		For $|k_1|\sim |k_2|\sim N_{(1)}\lesssim N$, by the mean-value theorem and the fact that
		$\chi_N(k_1)-\chi_N(k_2)$ takes the form $\widetilde{\chi}(|k_1|^2/N^2)-\widetilde{\chi}(|k_2|^2/N^2)$, we have
		$$ \Big|\frac{\chi_N(k_1)^2}{ 1+|k_2|^{2s}
	 }-\frac{\chi_N(k_2)^2}{1+|k_1|^{2s} 
		}\Big|\lesssim \frac{|\Omega(\vec{k})|+|k_{(3)}|^2}{|k_{(1)}|^{2(s+1)}}.
		$$
		Thus the remainder term $R_{N_1,\cdots,M_5}$ can be estimated by the previous deterministic manipulation (recall that $|\Omega(\vec{k})|\gtrsim N_{(1)}^{\delta_0}>N_{(1)}^{2\theta}$ in the sum)
		\begin{align}
			&N_1^{2(s-1)+2\theta-2(s+1)}\sum_{\Lambda_{1,1}}\mathbf{1}_{k_1\neq k_2} \frac{|\Omega(\vec{k})|+N_1^{2\theta}}{|\Omega(\vec{k})|}|w_{k_3}^{N_3}\cdots w_{p_5}^{M_5}|\notag  \\
			\lesssim &N_1^{-4+2\theta}\cdot N_1^{3}\|w_{k_3}^{N_3}\|_{l^1}\cdots \|w_{p_5}^{M_5}\|_{l^1}\notag\\
			\lesssim & N_1^{-1+2\theta}\|w_{k_3}^{N_3}\|_{h^{\sigma}}\cdots \|w_{p_5}^{M_5}\|_{h^{\sigma}}(N_3\dots M_5)^{-\sigma+\frac{3}{2}} \notag \\
			\lesssim &N_1^{-1+2\theta}(N_3\cdots M_5)^{-(\sigma-\frac{3}{2})}\|w\|_{H^{\sigma}}^8\lesssim N_{(1)}^{-1+2\theta}R^8, \label{outputRN1M5}
		\end{align}
		which is conclusive as far as  $\theta<\frac{1}{2}$ and $\sigma>\frac{3}{2}$.
		
		We next estimate  $\widetilde{J}_{N_1,\cdots,M_5}$. We will not make use of the cancellation in the difference 
		$$
\chi_N(k_1)^2\big(|w^{N_2}_{k_2}|^2-\frac{1}{1+|k_2|^{2s} }  
\big)-\chi_N(k_2)^2\big(|w^{N_1}_{k_1}|^2-
		\frac{1}{1+|k_1|^{2s}}
\big),
		$$
		so we treat separately in the same manner the contribution of each term.

		%To estimate $\widetilde{J}_{N_1,\cdots,M_5},$ note that under the measure $\mu_s$, 
		%\begin{align*}  \widetilde{J}_{N_1,\cdots,M_5}(w)=\sum_{\substack{\Lambda_{1,1}\\ |k_1|\sim N_1\\|k_2|\sim N_2 }} \Big(\frac{|g_{k_2}|^2-1}{\langle k_2\rangle^{2s}}-\frac{|g_{k_1}|^2-1}{\langle k_1\rangle^{2s}}\Big)\Psi(\vec{k})  w^{N_3}_{k_3}\ov{w}^{N_4}_{k_4}w^{N_5}_{k_5}\ov{w}_{k_6}^{N_6} \cdot
		%	\ov{w}^{M_2}_{p_2}w_{p_3}^{M_3}\ov{w}_{p_4}^{M_4} w_{p_5}^{M_5}.
		%\end{align*}
		%Here we will not make use of the cancellation between $\frac{|g_{k_2}|^2-1}{\langle k_2\rangle^{2s}}$ and $\frac{|g_{k_1}|^2-1}{\langle k_1\rangle^{2s}}$. So we treat them separately in the same manner. 
		Let $\mathcal{B}_{\ll N_1}$ be the $\sigma$-algebra generated by Gaussians $g_{k_j},|k_j|\ll N_1$ and $\mathbf{P}_{\ll N_1}$ the frequency projector to $|k|\ll N_1$. In particular,
		$g_{k_1},g_{k_2}$ for $|k_1|\sim N_1,|k_2|\sim N_2$ are independent of the $\sigma$-algebra $\mathcal{B}_{\ll N_1}$.
		We have
		\begin{align*} \|\widetilde{J}_{N_1,\cdots,M_5}(w)\mathbf{1}_{\|w\|_{H^{\sigma}}\leq R}\|_{L^p(\dd\mu_s)}^p\leq\mathbb{E}^{\mu_s}[\mathbb{E}^{\mu_s}[|\widetilde{J}_{N_1,\cdots,M_5}(w)\mathbf{1}_{\|\mathbf{P}_{\ll N_1}w\|_{H^{\sigma}}\leq R}|^p|\mathcal{B}_{\ll N_1}
			]
			].
		\end{align*}
		As $\mathbf{P}_{\ll N_1}w,w_{k_3}^{N_3},\cdots,w_{p_5}^{M_5} $ are $\mathcal{B}_{\ll N_1}$-measurable,	by the Wiener chaos estimate conditional to $\mathcal{B}_{\ll N_1}$, 
		\begin{align*}
			&\Big(\mathbb{E}^{\mu_s}[|\widetilde{J}_{N_1,\cdots,M_5}(w)\mathbf{1}_{\|\mathbf{P}_{\ll N_1}w\|_{H^{\sigma}}\leq R}|^p|\mathcal{B}_{\ll N_1}
			]\Big)^{\frac{1}{p}}\\
			\leq & Cp\Big(\sum_{|k_2|\sim N_2} \frac{1}{\langle k_2\rangle^{4s}}
			\Big|\sum_{k_3-k_4+k_5-k_6=p_2-p_3+p_4-p_5}\!\!\!\!\!\!\Psi(\vec{k})w_{k_3}^{N_3}\cdots w_{p_5}^{M_5} \Big|^2
			\Big)^{\frac{1}{2}} \cdot \mathbf{1}_{\|\mathbf{P}_{\ll N_1}w\|_{H^{\sigma}}\leq R}.
		\end{align*}
		By Lemma \ref{cancellationerror} and Cauchy-Schwarz, we bound the above expression by 
		\begin{align*}
			&CpN_2^{-2+2\theta}\Big(\sum_{\Lambda_{1,1}, k_1\neq k_2}\frac{1}{|\Omega(\vec{k})|^2}\Big)^{\frac{1}{2}}\|w_{k_3}^{N_3}\|_{l^2}\cdots\|w_{p_5}^{M_5}\|_{l^2}\cdot\mathbf{1}_{\|\mathbf{P}_{\ll N_1}w \|_{H^{\sigma}}\leq R }\\
			\lesssim & CpN_2^{-2+2\theta}\cdot N_2\|w_{k_3}^{N_3}\|_{h^{\sigma}}\cdots\|w_{p_5}^{M_5}\|_{h^{\sigma}}(N_3\cdots M_5)^{-\sigma+\frac{3}{2}}\cdot\mathbf{1}_{\|\mathbf{P}_{\ll N_1}w \|_{H^{\sigma}}\leq R}\\
			\lesssim &CpN_1^{-1+2\theta}R^8,
		\end{align*}
		provided that 
		%which is conclusive as far as $\theta<\frac{1}{2}$ and 
		$\sigma>\frac{3}{2}$.
		Therefore,
		$$ \|\widetilde{J}_{N_1,\cdots,M_5}(w)\mathbf{1}_{\|w\|_{H^{\sigma}}\leq R}\|_{L^p(\dd\mu_s)}\lesssim pN_1^{-1+2\theta}R^8.
		$$
		Combining with \eqref{outputRN1M5} and interpolating with \eqref{Jbounddeterministic}, we deduce that there exist constants $C>0$, $\beta=\delta(s,\theta)\in(0,1)$, such that for any $p\geq 2$ and $R\geq 1$, 
		\begin{align}\label{Jboundfinal} 
			\|J_{N_1,\cdots,M_5}(w)\mathbf{1}_{\|w\|_{H^{\sigma}}\leq R}\|_{L^p(\dd\mu_s)}\leq Cp^{\beta}N_{(1)}^{-1+2\theta}R^{10},
		\end{align}
		which is conclusive since $\theta<\frac{1}{3}$.
		\\
		
		\vspace{0.3cm}
		\noi
		$\bullet$ {\bf Estimate for $j=2$:}
		It suffices to estimate $\Im\mathcal{S}_{1,2}(w)$. Recall that	
$$
\mathcal{S}_{1,2}(w):=\sum_{\Lambda_{1,2}}\chi_N(k_1)^2|w_{k_3}|^2\Big(1-	\chi\Big(\frac{\Omega(\vec{k})}{\lambda(\vec{k})^{\delta_0}}\Big)\Big)\frac{\psi_{2s}(\vec{k})}{\Omega(\vec{k})}
\ov{w}_{k_2}\ov{w}_{k_4}w_{k_5}\ov{w}_{k_6}\notag \cdot w_{p_1}w_{p_3}\ov{w}_{p_4}w_{p_5},
$$
and on $\Lambda_{1,2}$, $|\Omega(\vec{k})|\sim |k_{(1)}|^2\gg \lambda(\vec{k})^{\delta_0}$, thus
$$ 
\frac{\psi_{2s}(\vec{k})}{\Omega(\vec{k})}\Big(1-\chi\Big(\frac{\Omega(\vec{k})}{\lambda(\vec{k})^{\delta_0}}\Big)\Big)=\frac{\psi_{2s}(\vec{k})}{\Omega(\vec{k})}.
$$
Set
$$ 
\widetilde{\Psi}(\vec{k}):=\frac{\psi_{2s}(\vec{k})}{\Omega(\vec{k})}-\frac{|k_1|^{2s}+|k_3|^{2s}}{|k_1|^2+|k_3|^2}.
$$
By mean-value theorem, we easily deduce that:
\begin{lem}\label{cancellationerror'} 
On $\Lambda_{1,2}$ defined in \eqref{defLambda12}, for sufficiently large $|k_{(1)}|$, we have
$$
 |\widetilde{\Psi}(\vec{k})|\lesssim \frac{|k_{(1)}|^{2s-2}|k_{(3)}|^{2}}{|\Omega(\vec{k})|}\mathbf{1}_{|\Omega(\vec{k})|\sim |k_{(1)}|^{2}}.
$$
\end{lem}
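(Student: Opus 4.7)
The strategy is to mimic the decomposition carried out in Lemma~\ref{cancellationerror}, but now adapted to the configuration where the two dominant frequencies $k_{(1)},k_{(2)}$ are $\{k_1,k_3\}$, both of which carry a $+$ sign in $\Omega(\vec{k})$. The first observation is that, on $\Lambda_{1,2}$, the four residual frequencies $k_2,k_4,k_5,k_6$ all satisfy $|k_j|\leq |k_1|^\theta+|k_3|^\theta$ with $\theta<1/3$, and hence the partial sum
$$
\widetilde{G}:=-|k_2|^2-|k_4|^2+|k_5|^2-|k_6|^2
$$
obeys $|\widetilde{G}|\lesssim |k_{(3)}|^2\lesssim |k_{(1)}|^{2\theta}$. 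Since $|k_1|\sim|k_3|\sim |k_{(1)}|$, it follows at once that $\Omega(\vec{k})=(|k_1|^2+|k_3|^2)+\widetilde{G}\sim |k_{(1)}|^2$, so the indicator $\mathbf{1}_{|\Omega(\vec{k})|\sim |k_{(1)}|^{2}}$ is automatic for sufficiently large $|k_{(1)}|$. Note that this also makes the cutoff $1-\chi(\Omega(\vec{k})/\lambda(\vec{k})^{\delta_0})$ identically one on $\Lambda_{1,2}$, so the delicate case analysis from Lemma~\ref{cancellationerror} does not recur here.

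Next, I would split $\psi_{2s}(\vec{k})=(|k_1|^{2s}+|k_3|^{2s})+\widetilde{F}$ with
$$
\widetilde{F}:=-|k_2|^{2s}-|k_4|^{2s}+|k_5|^{2s}-|k_6|^{2s},
$$
and reduce to a common denominator in the definition of $\widetilde{\Psi}$:
$$
\widetilde{\Psi}(\vec{k})=\frac{1}{\Omega(\vec{k})}\Bigl(\widetilde{F}-\frac{|k_1|^{2s}+|k_3|^{2s}}{|k_1|^2+|k_3|^2}\,\widetilde{G}\Bigr),
$$
after the cancellation of the leading term $(|k_1|^{2s}+|k_3|^{2s})$ in the numerator. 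It then suffices to estimate the bracketed expression by $|k_{(1)}|^{2s-2}|k_{(3)}|^2$.

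For this, I use $|\widetilde{F}|\lesssim |k_{(3)}|^{2s}\leq |k_{(1)}|^{2s-2}|k_{(3)}|^{2}$ (valid since $s\geq 1$ and $|k_{(3)}|\leq |k_{(1)}|$), together with the elementary estimate $|(|k_1|^{2s}+|k_3|^{2s})/(|k_1|^2+|k_3|^2)|\lesssim |k_{(1)}|^{2s-2}$ and the bound $|\widetilde{G}|\lesssim |k_{(3)}|^2$ already established. Combining these with the lower bound $|\Omega(\vec{k})|\sim |k_{(1)}|^2$ yields the claimed estimate. There is no real obstacle in this lemma — all the delicate multi-scale analysis of Lemma~\ref{cancellationerror} is sidestepped because here the two dominant frequencies contribute with the same sign to $\Omega$, preventing $\Omega(\vec{k})$ from ever being small relative to $|k_{(1)}|^2$.
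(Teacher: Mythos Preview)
Your proof is correct and follows exactly the approach the paper intends: the paper merely writes ``By mean-value theorem, we easily deduce that,'' and your argument supplies the routine details in the spirit of Lemma~\ref{cancellationerror}, correctly noting that the cutoff analysis there is unnecessary because $|\Omega(\vec{k})|\sim |k_{(1)}|^2$ forces $1-\chi(\Omega(\vec{k})/\lambda(\vec{k})^{\delta_0})\equiv 1$ on $\Lambda_{1,2}$.
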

Since
$$
\sum_{k_1,k_3}\chi_N(k_1)^2|w_{k_3}|^2\frac{|k_1|^{2s}+|k_3|^{2s}}{|k_1|^2+|k_3|^2}\!\!\!\!
\sum_{\substack{k_2+k_4-k_5+k_6=k_1+k_3\\ p_1+p_3-p_4+p_5=k_1+k_3\\ |k_2|+|k_4|+|k_5|+|k_6|\leq |k_1|^{\theta}+|k_3|^{\theta}\\
|p_1|+|p_3|+|p_4|+|p_5|\leq |k_1|^{\theta}+|k_3|^{\theta} } }\!\!\!\!
\ov{w}_{k_2}\ov{w}_{k_4}w_{k_5}\ov{w}_{k_6}\cdot w_{p_1}w_{p_3}\ov{w}_{p_4}w_{p_5}
$$
equals to
$$
\sum_{k_1,k_3}\chi_N(k_1)^2|w_{k_3}|^2\frac{|k_1|^{2s}+|k_3|^{2s}}{|k_1|^2+|k_3|^2}\Big|\!\!\!\!\sum_{\substack{k_2+k_4-k_5+k_6=k_1+k_3\\
				|k_2|+|k_4|+|k_5|+|k_6|\leq |k_1|^{\theta}+|k_3|^{\theta} } }
		\!\!\!\!\!\!\
		%e^{it(|k_2|^2+|k_4|^2-|k_5|^2+|k_6|^2)}
		\ov{w}_{k_2}\ov{w}_{k_4}w_{k_5}\ov{w}_{k_6}
		\Big|^2,
		$$		
		which is real-valued, we deduce that
		\begin{align}\label{formula:ImS12}
			\mathrm{Im}(\mathcal{S}_{1,2}(w))=I(w):=&\sum_{\Lambda_{1,2}}\chi_N(k_1)^2|w_{k_3}|^2\widetilde{\Psi}(\vec{k})\ov{w}_{k_2}\ov{w}_{k_4}w_{k_5}\ov{w}_{k_6}\cdot w_{p_1}w_{p_3}\ov{w}_{p_4}w_{p_5}.
		\end{align}	
		As for the case $j=1$, we will split the sum into dyadic pieces. For dyadic numbers $N_j,M_j$, we decompose accordingly $w_{k_j}^{N_j}=w_{k_j}\mathbf{1}_{|k_j|\sim N_j}$ and $w_{p_i}^{M_i}=w_{p_i}\mathbf{1}_{|p_i|\sim M_i}$. It suffices to show that for some $\beta\in(0,1)$.
		\begin{align}\label{dyadicIbound}
			\|I_{N_1,\cdots,N_6;M_2,\cdots,M_5}(w)\mathbf{1}_{\|w\|_{H^{\sigma}}\leq R}\|_{L^p(\dd\mu_s)}\lesssim p^{\beta}N_{(1)}^{-\frac{1}{100}}R^{10},
		\end{align}
		where $I_{N_1,\cdots,N_6;M_2,\cdots,M_5}$ is the same expression as $I(w)$ by replacing the inputs $w_{k_j},w_{p_i}$ to $w_{k_j}^{N_j},w_{p_i}^{M_i}$.	
		
		It turns out that only deterministic estimates suffice. Indeed, from the fact that $|\Omega(\vec{k})|\sim N_{(1)}^2$, by Lemma \ref{cancellationerror'}, 
		we have
		$$ |I_{N_1,\cdots,M_5}(w)|\lesssim \frac{N_{(1)}^{2s-2}N_{(3)}^2}{N_{(1)}^2}\,\sum_{k_3}|w^{N_3}_{k_3}|^2\sum_{k_2,k_4,k_5,k_6}\sum_{p_1,p_3,p_4,p_5}|w^{N_2}_{k_2}w^{N_4}_{k_4}w^{N_5}_{k_5} w^{N_6}_{k_6}w^{M_1}_{p_1}w^{M_3}_{p_3}w^{M_4}_{p_4}w^{M_5}_{p_5}|,
		$$
		which is bounded by
		\begin{align*} &N_{(1)}^{2(s-2)}N_{(3)}^2\|w_{k_3}^{N_3}\|_{l^2}^2\|w_{k_2}^{N_2}\|_{l^1}\cdots \|w_{k_6}^{N_6}\|_{l^1}\|w_{p_1}^{M_1}\|_{l^1}\cdots\|w_{p_5}^{M_5}\|_{l^1}\\
			\lesssim & N_{(1)}^{2(s-2-\sigma)+2\theta}(N_2N_4N_5N_6M_1M_3M_4M_5)^{-(\sigma-\frac{3}{2})}\|w\|_{H^{\sigma}}^{10}\\
			\leq &N_{(1)}^{2(s-2-\sigma)+2\theta}\|w\|_{H^{\sigma}}^{10},	
		\end{align*}
provided that $\sigma>\frac{3}{2}$, which is conclusive when $\theta<\frac{1}{2}$ and $\sigma$ close enough to $s-\frac{3}{2}$. This completes the proof of Proposition \ref{prop:Sij}.	
\end{proof}	
%%%%%%%%%%%%%%%%%%%%%%%%%%%%%%%%%%%%%%%%%%%%%%%%%%%%%%%%%%%%%%%%%%%%%%%%%%
\section{Energy estimate III: Remainders in the second generation } \label{Section:rest}
In this section, we will estimate $\mathcal{R}_{1,3}(w),\mathcal{R}_{2,3}(w)$. More precisely, we have the following statement:
%defined in \eqref{defR13} in order to prove:
\begin{prop}\label{prop:R13}
Let $\theta<\frac{1}{3}$, close enough to $\frac{1}{3}$ and $\delta_0\in (2\theta, \frac{2}{3})$, close enough to $\frac{2}{3}$.  There exist $C>0$ and $\beta=\beta(\theta,s)\in(0,1)$, such that for $j\in\{1,2\}$, $R\geq 1$ and $p\in[2,\infty)$, we have
$$ 
\|\mathcal{R}_{1,3}(w) \mathbf{1}_{B_R^{H^{\sigma}}	(w)} \|_{L^p(\dd\mu_s)}+\|\mathcal{R}_{2,3}(w)\mathbf{1}_{B_R^{H^{\sigma}}	(w)} \|_{L^p(\dd\mu_s)}\leq Cp^{\beta}R^{10}.
$$
\end{prop}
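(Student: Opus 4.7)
I would follow the dyadic-plus-Wiener-chaos scheme from the estimate of $\mathcal{R}_0(w)$ in Proposition~\ref{firstgeneration}, now applied to the ten-frequency sums defining $\mathcal{R}_{1,3}, \mathcal{R}_{2,3}$. After localizing all ten frequencies dyadically, I would use the pointwise bound
\[
\left|\frac{\psi_{2s}(\vec{k})}{\Omega(\vec{k})}\right| \lesssim |k_{(1)}|^{2s-2}\Big(1 + \frac{|k_{(3)}|^2}{|\Omega(\vec{k})|}\Big),
\]
together with the restriction $|\Omega(\vec{k})| \gtrsim \lambda(\vec{k})^{\delta_0} \sim |k_{(1)}|^{\delta_0}$ coming from the cutoff $1 - \chi(\Omega/\lambda^{\delta_0})$. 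The trichotomy A / B / C from the definition of $\mathcal{R}_{1,3}, \mathcal{R}_{2,3}$ then dictates three separate regimes.

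\textbf{Type A.} When $\sum_{j=3}^{10}|k_{(j)}| > |k_{(1)}|^\theta + |k_{(2)}|^\theta$, enough frequencies are comparably large that a purely deterministic estimate suffices. Cauchy--Schwarz on the two dominant frequencies and $\ell^1$ Sobolev bounds on the remaining eight, combined with $|\Omega(\vec{k})|\gtrsim |k_{(1)}|^{\delta_0}$ to tame the $|k_{(3)}|^2/|\Omega(\vec{k})|$ factor, would yield a convergent dyadic sum $\lesssim N_{(1)}^{-\gamma} R^{10}$, provided $\sigma$ is close to $s - \tfrac{3}{2}$ and $\theta$ close to $\tfrac{1}{3}$, both of which are afforded by $s \geq 10$.

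\textbf{Type B.} When $k_{(1)}, k_{(2)}$ both lie in the same generation (say the $k_j$'s), all frequencies from the other generation are bounded by $|k_{(1)}|^\theta$ and contribute only an $H^\sigma$-bounded multiplier. I would condition on the $\sigma$-algebra $\mathcal{B}_{\ll N_{(1)}}$ generated by the low-frequency Gaussians, apply the conditional Wiener chaos estimate (Lemma~\ref{conditionalWiner}) to the active high-frequency Gaussians, and close with Lemma~\ref{threevectorcounting}, mirroring the Term II analysis in Proposition~\ref{firstgeneration}. The additional factor $1/|\Omega(\vec{k})| \lesssim |k_{(1)}|^{-\delta_0}$ makes this step strictly easier than the corresponding one for $\mathcal{R}_0$.

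\textbf{Type C (main obstacle).} When $k_{(1)}, k_{(2)}$ are unpaired and lie in different generations, I again condition on $\mathcal{B}_{\ll N_{(1)}}$ and apply Wiener chaos, but now to a bilinear expression in two independent high-frequency Gaussians. The compatibility of the two generations, $p_1 - p_2 + p_3 - p_4 + p_5 = k_2 - k_3 + k_4 - k_5 + k_6$, together with all remaining indices being $\lesssim |k_{(1)}|^\theta$, forces a suitable signed sum of $k_{(1)}, k_{(2)}$ to have size $\lesssim |k_{(1)}|^\theta$, while the energy constraint $\Omega(\vec{k}) = \kappa$ with $|\kappa| \gtrsim |k_{(1)}|^{\delta_0}$ fixes one further direction. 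The delicate point is that neither the deterministic argument of Type A (too few large frequencies to absorb) nor the single-generation conditioning of Type B (two independent high-frequency Gaussians are active) applies directly. What saves the estimate is the combination of (i) this near-pairing of $(k_{(1)}, k_{(2)})$ via momentum conservation through the small second generation, and (ii) the lower bound $|\Omega(\vec{k})| \gtrsim |k_{(1)}|^{\delta_0}$ from the normal form cutoff; together with Lemma~\ref{threevectorcounting} and a logarithmic sum in $1/|\Omega(\vec{k})|$, they deliver the required decay $N_{(1)}^{-\gamma}$ as soon as $\theta < \tfrac{1}{3}$ and $\delta_0 \in (2\theta, \tfrac{2}{3})$ are chosen close to their respective limits, exactly as in the hypothesis.
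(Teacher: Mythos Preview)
Your plan is essentially the paper's proof: the same A/B/C trichotomy, a deterministic bound for Type~A, and conditional Wiener chaos (Lemma~\ref{conditionalWiner}) plus the counting Lemma~\ref{threevectorcounting} for Types~B and~C. Two points in your sketch need correction before it closes.

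\emph{Inner pairings in Types~B and~C.} When the two dominant leaves have opposite signatures they may coincide (e.g.\ $k_2=k_3$ in Type~B1, or the derived $k_1=p_1-p_2+\cdots+p_5$ equals $k_2$ in Type~C). In that case the high-frequency factor becomes $|w_{k_2}|^2$, which is not mean-zero, and your conditional Wiener-chaos step does not apply as stated. The paper separates these diagonals first and bounds them \emph{deterministically}, using that the pairing forces $\psi_{2s}(\vec{k})$ to collapse to $|k_{(3)}|^{2s}$-size; this gives $N_{(1)}^{2s\theta-2\sigma}$, which is summable for $\theta<\tfrac13$ and $\sigma$ close to $s-\tfrac32$. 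Only the non-paired part is fed into Wiener chaos.

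\emph{Type~B2: the $|\Omega|$ lower bound is not what you wrote.} If both dominant leaves lie among $p_1,\dots,p_5$, then all six $k_j$'s (including $k_1=\sum\pm p_j$, which can be small) satisfy $|k_j|\lesssim N_{(3)}$, so $\lambda(\vec{k})\lesssim N_{(3)}$ and the cutoff only gives $|\Omega(\vec{k})|\gtrsim N_{(3)}^{\delta_0}$, not $N_{(1)}^{\delta_0}$. Your claimed gain $1/|\Omega|\lesssim |k_{(1)}|^{-\delta_0}$ is false here. What saves this sub-case is instead that $|\psi_{2s}(\vec{k})|\lesssim N_{(3)}^{2s}$ (since all $k_j$ are small), which after Wiener chaos yields $N_{(1)}^{-2s+\frac32+2s\theta}\lesssim N_{(1)}^{-1/2}$; the extra $1/|\Omega|$ is not even needed (the paper drops it, cf.\ the remark after Lemma~\ref{basicdeterministic}).

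With these two fixes your outline coincides with the paper's argument.
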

Since the estimate for $\mathcal{R}_{2,3}(w)$ is similar, we only do it for $\mathcal{R}_{1,3}(w)$. Recall in the expression of $\mathcal{R}_{1,3}(w)$, we distinguish three types of contributions in the decomposition of the sum
$$ 
\sum_{\substack{k_1-k_2+k_3-k_4+k_5-k_6=0\\ k_1=p_1-p_2+p_3-p_4+p_5\\}}(\cdots).
$$
Recall that $k_{(1)}\cdots,k_{(10)}$ is a rearrangement of leaves $p_1,p_2,p_3,p_4,p_5,k_2,k_3,k_4,k_5,k_6$ such that $|k_{(1)}|\geq |k_{(2)}|\geq \cdots \geq |k_{(10)}|$.
	\begin{itemize}
		\item Type A: $\sum_{j=3}^{10}|k_{(j)}|>|k_{(1)}|^{\theta}+|k_{(2)}|^{\theta}$.
		\item Type B: $\sum_{j=3}^{10}|k_{(j)}|\leq |k_{(1)}|^{\theta}+|k_{(2)}|^{\theta}$ and $\{k_{(1)},k_{(2)}\}\subset \{k_2,k_3,k_4,k_5,k_6\}$ or $\{k_{(1)},k_{(2)}\}\subset \{p_1,p_2,p_3,p_4,p_5\}$.
		\item Type C: $\sum_{j=3}^{10}|k_{(j)}|\leq |k_{(1)}|^{\theta}+|k_{(2)}|^{\theta}$, $k_{(1)}\neq k_{(2)}$ and $$k_{(1)}\in\{k_2,k_3,k_4,k_5,k_6\},\quad k_{(2)}\in\{p_1,p_2,p_3,p_4,p_5\}$$ or $$k_{(2)}\in\{k_2,k_3,k_4,k_5,k_6\},\quad k_{(1)}\in\{p_1,p _2,p_3,p_4,p_5\},
		$$
		and $k_{(1)},k_{(2)}$ have different signatures.
	\end{itemize}
	Let us denote by $\Lambda_A$, $\Lambda_B$, $\Lambda_C$ the sets of indices $k_1, \ldots, k_6, p_1, \ldots, p_5$ that satisfy the linear constraints
	$$ k_1-k_2+k_3-k_4+k_5-k_6=0,\quad k_1=p_1-p_2+p_3-p_4+p_5
	$$
	and the conditions for Type A, B, C respectively. 
	Furthermore, we denote $\mathcal{R}_{1,3}^{(A)},\mathcal{R}_{1,3}^{(B)},\mathcal{R}_{1,3}^{(C)}$ the corresponding contributions to $\mathcal{R}_{1,3}(w)$.
	We will need the following elementary lemma. 
%%%%%%%%%%%	
\begin{lem}\label{basicdeterministic} 
		Assume that $f^{(j)}$ satisfies $f_{k_j}^{(j)}\mathbf{1}_{|k_j|\sim M_j}=f_{k_j}^{(j)}$ for $j=1,2,3,4,5,6$ with $M_j\in 2^{\N}$. Then
		\begin{multline*}
			%&
			\sum_{k_1,\cdots,k_6}\frac{\psi_{2s}(\vec{k})}{|\Omega(\vec{k})|}
			\Big(1-\chi\Big(\frac{\Omega(\vec{k})}{\lambda(\vec{k})^{\delta_0}}\Big)\Big)\cdot 
			\fk{h}_{k_1^+k_2^-k_3^+k_4^-k_5^+k_6^-}\cdot 
			\prod_{j=1}^6|f_{k_j}^{(j)}|
			\\ 
			\lesssim 
			%&
			M_{(1)}^{2s-2}\, M_{(3)}^2 (M_{(3)}M_{(4)}M_{(5)}M_{(6)})^{\frac{3}{2}}
			\prod_{j=1}^6\|f^{(j)}\|_{l^2},
		\end{multline*}
	where $M_{(1)}\geq M_{(2)}\geq\cdots\geq M_{(6)}$ is non-increasing rearrangement of dyadic integers $M_1,M_2,\cdots,M_6$.
	\end{lem}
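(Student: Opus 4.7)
The plan is to reduce to an elementary multilinear $l^{2}$ counting estimate after extracting a pointwise bound on the weight $\psi_{2s}(\vec{k})/|\Omega(\vec{k})|$.

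On the support of $1-\chi(\Omega(\vec{k})/\lambda(\vec{k})^{\delta_{0}})$ one has $|\Omega(\vec{k})|\gtrsim\lambda(\vec{k})^{\delta_{0}}\gtrsim 1$ (terms with $\lambda(\vec{k})=0$ contribute zero, since then $\psi_{2s}(\vec{k})=0$). Combined with Lemma \ref{monto_gordo}, this yields the pointwise bound
\[
\frac{|\psi_{2s}(\vec{k})|}{|\Omega(\vec{k})|}
\;\lesssim\; M_{(1)}^{2s-2}\Big(1+\frac{M_{(3)}^{2}}{|\Omega(\vec{k})|}\Big)
\;\lesssim\; M_{(1)}^{2s-2}\,M_{(3)}^{2},
\]
using $M_{(3)}\geq 1$ and $|\Omega(\vec{k})|\geq 1$. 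Pulling this factor out of the sum reduces the statement to the unweighted multilinear bound
\[
T\;:=\;\sum_{k_{1}-k_{2}+k_{3}-k_{4}+k_{5}-k_{6}=0}\prod_{j=1}^{6}|f^{(j)}_{k_{j}}|
\;\lesssim\;
(M_{(3)}M_{(4)}M_{(5)}M_{(6)})^{3/2}\prod_{j=1}^{6}\|f^{(j)}\|_{l^{2}}.
\]

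To prove the latter, choose $i,j\in\{1,\dots,6\}$ such that $M_{i}$ and $M_{j}$ realize $M_{(1)}$ and $M_{(2)}$ respectively, and solve the linear constraint for $k_{i}$. Applying Cauchy-Schwarz first in $k_{j}$ and then in the remaining four summation variables, together with the trivial counting bound $\#\{k_{l}:|k_{l}|\sim M_{l}\}\lesssim M_{l}^{3}$ applied to each of those four, produces a counting factor of $(M_{(3)}M_{(4)}M_{(5)}M_{(6)})^{3/2}$. For the leftover $l^{2}$ sum, one uses that for fixed values of the four variables handled by the inner Cauchy-Schwarz, the map $k_{j}\mapsto k_{i}$ determined by the linear constraint is an affine bijection of $\Z^{3}$, so $\sum_{k_{j}}|f^{(i)}_{k_{i}}|^{2}=\|f^{(i)}\|_{l^{2}}^{2}$. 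Combining the resulting estimate for $T$ with the pointwise weight bound gives the conclusion.

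The argument is purely deterministic and no serious obstacle is expected; the only mildly lossy step is the pointwise weight bound, where we use the crude $|\Omega|\geq 1$ rather than $|\Omega|\gtrsim\lambda^{\delta_{0}}$. The extra $M_{(3)}^{2}$ produced by this loss is harmless for the remainder estimates of Section \ref{Section:rest}.
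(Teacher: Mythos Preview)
Your proof is correct and follows essentially the same approach as the paper's: both use Lemma~\ref{monto_gordo} together with $|\Omega(\vec{k})|\gtrsim 1$ on the support of $1-\chi$ to extract the pointwise weight bound $M_{(1)}^{2s-2}M_{(3)}^{2}$, then apply Cauchy--Schwarz in the two largest-frequency variables (using that the linear constraint makes one an affine bijection of the other) and pass from $l^{1}$ to $l^{2}$ on the remaining four via the trivial counting bound. The paper likewise remarks that the denominator $|\Omega(\vec{k})|^{-1}$ is not exploited beyond the crude $|\Omega|\geq 1$.
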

	\begin{proof}
Since signature of $k_j$ plays no significant role in the proof, without loss of generality, we assume that $M_1\geq M_2\geq\cdots\geq M_6$. Since $|\psi_{2s}(\vec{k})|\lesssim M_{1}^{2s-2}(M_{3}^2+|\Omega(\vec{k})|)$, using the Cauchy-Schwarz inequality in the $k_1$ and $k_2$ summation, we obtain the bound 		
$$
M_{(1)}^{2s-2}\, M_{(3)}^2 \prod_{j=1}^2\|f^{(j)}\|_{l^2}\prod_{j=3}^6\|f^{(j)}\|_{l^1}.
$$	
It remains to use the 	Cauchy-Schwarz inequality  to pass from $l^1$ to $l^2$. This completes the proof of Lemma \ref{basicdeterministic}. 
\end{proof}
%%%%%%%%%%%%%%%%%%%%	
\begin{rem}
Since the crude bound is enough for our need, we do not make use of the denominator $\frac{1}{|\Omega(\vec{k})|}$ in the estimate above.
\end{rem}
\begin{proof}[Proof of Proposition \ref{prop:R13}]
		Since the proof follows from tedious estimates, we split it into three different parts, according to Type A,B,C.
		We will split the function $w$ into dyadic pieces, and we denote $w^K=\mathbf{P}_Kw$ in the sequel which means that $w_k^{K}=\mathbf{1}_{|k|\sim K}w_k$. 
		\begin{flushleft}
			$\bullet$ Estimate of Type A contribution:
		\end{flushleft}	
		
		We decompose the expression 
		$$ \mathcal{R}_{1,3}^{(A)}:=\sum_{\Lambda_A}\frac{\psi_{2s}(\vec{k})}{\Omega(\vec{k})}\Big(1-\chi\Big(\frac{\Omega(\vec{k})}{\lambda(\vec{k})^{\delta_0}}\Big)\Big)\chi_N(k_1)^2w_{p_1}\cdots w_{p_5}\ov{w}_{k_2}\cdots \ov{w}_{k_6}
		$$
		dyadically by
		\begin{align*}
			\sum_{M_1,\cdots,M_6, P_1,\cdots,P_5}\mathcal{R}_{1,3}^{(A)}(M_1,\cdots,P_5),
		\end{align*}
		where $\mathcal{R}_{1,3}^{(A)}(M_1,\cdots,P_5)$ is 
		$$ \sum_{\Lambda_A}\frac{\psi_{2s}(\vec{k})}{\Omega(\vec{k})}\Big(1-\chi\Big(\frac{\Omega(\vec{k})}{\lambda(\vec{k})^{\delta_0}}\Big)\Big)\chi_N(k_1)^2w^{P_1}_{p_1}\cdots w^{P_5}_{p_5}\ov{w}^{M_2}_{k_2}\cdots \ov{w}^{M_6}_{k_6}\cdot\mathbf{1}_{|k_1|\sim M_1}.
		$$
	 We denote by $N_{(1)}\geq N_{(2)}\geq\cdots N_{(10)}$ the non-increasing rearrangement of dyadic integers
		$$P_1,P_2,P_3,P_4,P_5,M_2,M_3,M_4,M_5,M_6.$$
		Note that the constraint $\sum_{j=3}^{10}|k_{(j)}|>|k_{(1)}|^{\theta}+|k_{(2)}|^{\theta}$ implies that $N_{(3)}\gtrsim N_{(1)}^{\theta}$ for non-zero terms $\mathcal{R}_{1,3}^{(A)}(M_1,\cdots,P_5)$. Write
$$
|\mathcal{R}_{1,3}^{(A)}(M_1,\cdots,P_5)|\leq  \sum_{k_1,\cdots,k_6}\frac{\psi_{2s}(\vec{k})}{|\Omega(\vec{k})|}\Big(1-\chi\Big(\frac{\Omega(\vec{k})}{\lambda(\vec{k})^{\delta_0}}\Big)\Big)
\cdot \fk{h}_{k_1^+k_2^-k_3^+k_4^-k_5^+k_6^-}\cdot
\prod_{j=1}^{6}|f_{k_j}^{(j)}|,
$$
where $f_{k_j}^{(j)}=w_{k_j}^{M_j}$ for $j=2,3,4,5,6$ and 
$$ 
f_{k_1}^{(1)}=\sum_{p_1-p_2+p_3-p_4+p_5=k_1}\mathbf{1}_{|k_1|\sim M_1}w_{p_1}^{P_1}\cdots w_{p_5}^{P_5}.
$$
Applying Lemma \ref{basicdeterministic}, we have
		\begin{align*}
			|\mathcal{R}_{1,3}^{(A)}(M_1,\cdots,P_5)|\lesssim &
			M_{(1)}^{2s-2}\, M_{(3)}^2 (M_{(3)}M_{(4)}M_{(5)}M_{(6)})^{\frac{3}{2}}
			%M_{(1)}^{2s-2}M_{(3)}^{\frac{5}{2}}\log M_{(1)}M_{(4)}^{\frac{1}{2}}(M_{(5)}M_{(6)}^{\frac{3}{2}})
			\prod_{j=1}^6\|f^{(j)}\|_{l^2}\\
			\lesssim & 
			M_{(1)}^{2s-2}\, M_{(3)}^2 (M_{(3)}M_{(4)}M_{(5)}M_{(6)})^{\frac{3}{2}}
			%M_{(1)}^{2s-2}(\log M_{(1)})M_{(3)}^{\frac{5}{2}}M_{(4)}^{\frac{1}{2}}(M_{(5)}M_{(6)})^{\frac{3}{2}}
			\|f^{(1)}\|_{l^2}\prod_{j=2}^6(M_j^{-\sigma}\|w^{M_j}\|_{H^{\sigma}}).
		\end{align*}
		where $M_{(1)}\geq M_{(2)}\geq \cdots\geq M_{(6)}$ is a non-increasing rearrangement of $M_1,M_2,\cdots,M_6$. By Cauchy-Schwarz, we have
		$$ \|f^{(1)}\|_{l^2}\lesssim (P_{(2)}P_{(3)}P_{(4)}P_{(5)})^{\frac{3}{2}}\prod_{j=1}^5(P_j^{-\sigma}\|w^{P_j}\|_{H^{\sigma}}),
		$$
		where $P_{(1)}\geq P_{(2)}\geq P_{(3)}\geq P_{(4)}\geq P_{(5)}$ is a non-increasing rearrangement of $P_1,P_2,P_3,P_4,P_5$. Thus we obtain that
		\begin{multline*}
		|\mathcal{R}_{1,3}^{(A)}(M_1,\cdots,P_5)|
		\\
		\lesssim 
			M_{(1)}^{2s-2}\, M_{(3)}^2 (M_{(3)}M_{(4)}M_{(5)}M_{(6)})^{\frac{3}{2}}
			(M_2\cdots M_6)^{-\sigma}\, P_{(1)}^{-\sigma}(P_{(2)}\cdots P_{(5)})^{\frac{3}{2}-\sigma}\|w\|_{H^{\sigma}}^{10}.
		\end{multline*}
Since we are in the regime $s\geq 10$ and $\sigma$ is close to $s-\frac{3}{2}$, we control  the right hand side by
$$ 
N_{(1)}^{2(s-1-\sigma)}\,N_{(3)}^{\frac{7}{2}-\sigma}\|w\|_{H^{\sigma}}^{10}\lesssim N_{(1)}^{2(s-1-\sigma)}\,N_{(1)}^{-\theta(\sigma-\frac{7}{2})}\|w\|_{H^{\sigma}}^{10}.
$$
For $s\geq 10$,  $\sigma$ close to $s-\frac{3}{2}$ and $\theta$ is close to $\frac{1}{3}$ the last expression can be estimated by $N_{(1)}^{-\epsilon_0(\theta,\sigma)}\|w\|_{H^{\sigma}}^{10}$ for some $\epsilon_0(\theta,\sigma)>0$ which is conclusive.
%%%%%%%%%%%%%%%%%%%%%%%%%%%%%%%%%%%%%%%%%%%%%%%%%%%%%%%%%%%%
		\vspace{0.3cm}
		
		\begin{flushleft}
			$\bullet$ Estimate of Type B contribution: 
		\end{flushleft}

		Denote $\Lambda_{B1}$ the set of $(k_1,\cdots,p_5)\in \Lambda_B$ such that  $k_{(1)},k_{(2)}\in \{k_2,k_3,k_4,k_5,k_6\}  $ and $\Lambda_{B2}$ the set of $(k_1,\cdots,p_5) \in\Lambda_B$ such that $k_{(1)},k_{(2)}\in\{p_1,p_2,p_3,p_4,p_5 \}$, and denote by $\mathcal{R}_{1,3}^{(B1)},\mathcal{R}_{1,3}^{(B2)}$ the corresponding multilinear expressions. 
		
		\noi
		$\bullet$ {\bf Subcase: Contribution $\mathcal{R}_{1,3}^{(B1)}$: }
		We first estimate $\mathcal{R}_{1,3}^{(B1)}$. By symmetry of indices, we may assume that $k_{(1)}=k_3, k_{(2)}=k_2$. Then other frequencies satisfy the constraint
		$$ \sum_{j=1}^5|p_j|+\sum_{j=4}^6|k_j|<|k_2|^{\theta}+|k_3|^{\theta}
		$$
		on $\Lambda_{B1}$.  
		We decompose $\mathcal{R}_{1,3}^{(B1)}$ by the dyadic sum
		$$ \sum_{M_1,\cdots,M_6, P_1,\cdots,P_5}\mathcal{R}_{1,3}^{(B1)}(M_1,\cdots,P_5)
		$$
		as in the estimate for Type (A) terms. Under the constraint of $\Lambda_B$ and our convention that $\{k_{(1)},k_{(2)}\}=\{k_3,k_2\}$, we must have $M_2\sim M_3\sim N_{(1)}$ and $\max\{M_1,M_4,M_5,M_6 \}\leq N_{(3)}$.
		
		Note that for the pairing part $k_2=k_3$ in $\mathcal{R}_{1,3}^{(B1)}(M_1,\cdots,P_5)$, we have $|\psi_{2s}(\vec{k})|\lesssim |k_{(3)}|^{2s}$, thus
		we can control it simply by 
		\begin{align}\label{innerpairingbound}   &\|w^{M_2}\|_{l^2}^2\cdot\!\!\!\!\!\sum_{\substack{k_1-k_4+k_5-k_6=0\\ k_1=p_1-p_2+p_3-p_4+p_5
			} }|k_{(3)}|^{2s}\prod_{j=1}^5|w_{p_j}^{P_j}|\prod_{j=4}^6|w_{k_j}^{M_j}|\notag \\
			\lesssim & N_{(1)}^{-2\sigma}N_{(3)}^{2s}\|w\|_{H^{\sigma}}^{10}\lesssim N_{(1)}^{2s\theta-2\sigma}\|w\|_{H^{\sigma}}^{10},
		\end{align}
		thanks to $\sigma>\frac{3}{2}$. As $\theta<\frac{1}{3}$, $s\geq 10$ and $\sigma$ is close enough to $s-\frac{3}{2}$, the right hand side is bounded by a negative power of $N_{(1)}$ times $\|w\|_{H^{\sigma}}^{10}$, which is conclusive.
		
		It remains to consider the non-pairing contribution in $\mathcal{R}_{1,3}^{(B1)}(M_1,\cdots,P_5)$.
		Recall that
		\begin{align*} &\mathcal{R}_{1,3}^{(B1)}(M_1,\cdots,P_5)\\= &\sum_{\substack{k_2\neq k_3\\
					|k_2|\sim M_2,|k_3|\sim M_3 } }\!\!\!\!\ov{w}_{k_2}^{M_2} w_{k_3}^{M_3}\!\!\!\!\!\!\sum_{\substack{k_4,k_5,k_6, p_1,\cdots,p_5
					\\ 
					p_1-\cdots+p_5-k_2+\cdots-k_6=0  } }\!\!\!\!\!\!\frac{\psi_{2s}(\vec{k})}{\Omega(\vec{k})}\Big(1-\chi\Big(\frac{\Omega(\vec{k})}{\lambda(\vec{k})^{\delta_0}}\Big)\Big)\ov{w}_{k_4}^{M_4}w_{k_5}^{M_5}\ov{w}_{k_6}^{M_6}w_{p_1}^{P_1}\cdots w_{p_5}^{P_5}.
		\end{align*}
		Denote $\mathcal{B}_{\ll M_{2}}$ the $\sigma$-algebra generated by $(g_k)_{|k|\leq M_2/100}$. Without loss of generality, we assume that $M_2\sim N_{(1)}$ is large enough such that $N_{(3)}\lesssim N_{(1)}^{\theta}\ll \frac{M_2}{100}$. Consequently, with respect to $\mu_s$, $w^{M_4},w^{M_5}, w^{M_6}, w^{P_1},\cdots, w^{P_5}$ are independent of $w^{M_2},w^{M_3}$.

		By conditional Wiener chaos estimate, we have
		\begin{align*}
			\|\mathcal{R}_{1,3}^{(B1)}(M_1,\cdots, P_5)\mathbf{1}_{B_R^{H^{\sigma}}	}(w)\|_{L^p(\dd\mu_s)}\leq &\|\mathcal{R}_{1,3}^{(B1)}(M_1,\cdots, P_5)\mathbf{1}_{B_R^{H^{\sigma}}	}(\mathbf{P}_{\ll  M_2}w)\|_{L^p(\dd\mu_s)}\\
			=&\big\| \|\mathcal{R}_{1,3}^{(B1)}(M_1,\cdots, P_5)\mathbf{1}_{B_R^{H^{\sigma}}	}(\mathbf{P}_{\ll M_2}w)\|_{L^p(\dd\mu_s|\mathcal{B}_{\ll M_2})} \big\|_{L^p(\dd\mu_s)}\\
			\lesssim &\, p\big\| \|\mathcal{R}_{1,3}^{(B1)}(M_1,\cdots, P_5)\|_{L^2(\dd\mu_s|\mathcal{B}_{\ll M_2})} \cdot \mathbf{1}_{B_R^{H^{\sigma}}	}(\mathbf{P}_{\ll M_2}w)\big\|_{L^p(\dd\mu_s)}.
		\end{align*}
		It suffices to show that
		\begin{align}\label{bdrandom:R13B1}
			\|\mathcal{R}_{1,3}^{(B1)}(M_1,\cdots, P_5)\|_{L^2(\dd\mu_s|\mathcal{B}_{\ll M_2})}\lesssim & N_{(1)}^{-\frac{1}{2}}\|w\|_{H^{\sigma}}^{8}.
		\end{align}
		Indeed, the above estimate yields
		$$ 	\|\mathcal{R}_{1,3}^{(B1)}(M_1,\cdots, P_5)\mathbf{1}_{B_R^{H^{\sigma}}	}(w) \|_{L^p(\dd\mu_s)}\lesssim pN_{(1)}^{-\frac{1}{2}}R^8.
		$$
		Since we have left a negative power of $N_{(1)}$, by interpolating with the crude deterministic bound
		which is of the form $N_{(1)}^{O(1)}$, we obtain the desired estimate.
		
		Now we prove \eqref{bdrandom:R13B1}. Thanks to the fact that $k_2\neq k_3$, we deduce that\footnote{In the summation below, we implicitly assume that $|k_2|\sim M_2, |k_3|\sim M_3.$}
		\begin{align*}
			&\|\mathcal{R}_{1,3}^{(B1)}(M_1,\cdots, P_5)\|_{L^2(\dd\mu_s|\mathcal{B}_{\ll M_2 })}\\ \lesssim &
			(M_2M_3)^{-s}\Big(\sum_{k_2\neq k_3}\Big(\!\!\!\!\!
			\sum_{\substack{k_4,k_5,k_6,p_1,\cdots,p_5\\
					p_1-\cdots+p_5-k_2+\cdots-k_6=0
			}  } \!\!\!\!\!
			\frac{\psi_{2s}(\vec{k})}{\Omega(\vec{k})}\Big(1-\chi\Big(\frac{\Omega(\vec{k})}{\lambda(\vec{k})^{\delta_0} } \Big) \Big) \ov{w}_{k_4}^{M_4}w_{k_5}^{M_5}\ov{w}_{k_6}^{M_6}w_{p_1}^{P_1}\cdots w_{p_5}^{P_5}
			\Big)^2 
			\Big)^{\frac{1}{2}}\\
			\lesssim & N_{(1)}^{-2s}\Big(\sum_{k_2\neq k_3}\sum_{\substack{k_4,k_5,k_6,p_1,\cdots,p_5\\
					p_1-\cdots+p_5-k_2+\cdots -k_6=0 }  } \frac{|\psi_{2s}(\vec{k})|^2}{1+|\Omega(\vec{k})|^2}
			|w_{k_{(3)}}^{N_{(3)}}|^2
			\Big)^{\frac{1}{2}}\Big(
			\sup_{k_2\neq k_3}\sum_{\substack{k_4,k_5,k_6,p_1,\cdots,p_5\\
					p_1-\cdots+p_5-k_2+\cdots-k_6=0
			} } |w_{k_{(4)}}^{N_{(4)}}\cdots w_{k_{(10)}}^{N_{(10)}}|^2\Big)^{\frac{1}{2}} \\
			\lesssim & N_{(1)}^{-2s}\Big(\prod_{j=4}^{10}\|w^{N_{(j)}}\|_{L^2}\Big)\cdot \Big(\sum_{\substack{k_2\neq k_3,k_4,k_5,k_6,p_1\cdots,p_5\\
					p_1-\cdots+p_5-k_2+\cdots -k_6=0
			} } \frac{M_{(1)}^{4(s-1)}(M_{(3)}^4+|\Omega(\vec{k})|^2 ) }{1+|\Omega(\vec{k})|^2}  |w_{k_{(3)}}^{N_{(3)}}|^2 
			\Big)^{\frac{1}{2}}.
		\end{align*}
		By Lemma \ref{threevectorcounting},  the last sum on the right hand side can be estimated as
		\begin{align*}
			&\sum_{\substack{k_4,k_5,k_6,p_1,\cdots,p_5  }  }|w_{k_{(3)}}^{N_{(3)}}|^2\!\!\!\!\!\sum_{\substack{k_2\neq k_3\\
					k_2-k_3=p_1-\cdots+p_5-k_4+k_5-k_6 } }\!\!\!\!\!
			\frac{N_{(1)}^{4(s-1)}( N_{(3)}^4+|\Omega(\vec{k})|^2) }{1+|\Omega(\vec{k})|^2}\\
			\lesssim & \sum_{k_4,k_5,k_6,p_1,\cdots, p_5}|w_{k_{(3)}}^{N_{(3)}}|^2N_{(1)}^{4(s-1)}(N_{(3)}^4N_{(1)}^2+N_{(1)}^3 ) \\
			\lesssim &\|w^{N_{(3)}}\|_{L^2}^2N_{(1)}^{4(s-1)}(N_{(3)}^4N_{(1)}^2+N_{(1)}^3)\prod_{j=4}^{10}N_{(j)}^3,
		\end{align*}
		thus
		\begin{align*}
			\|\mathcal{R}_{1,3}^{(B1)}(M_1,\cdots,P_5) \|_{L^2(\dd\mu_s|\mathcal{B}_{\ll M_2})} \lesssim & N_{(1)}^{-2}(N_{(1)}^{\frac{3}{2}}+N_{(3)}^2N_{(1)})
			\Big(\prod_{j=3}^{10}\|w^{N_{(j)}}\|_{L^2} \Big)
			\Big(\prod_{j=4}^{10}N_{(j)}^{\frac{3}{2}}\Big)\\
			\lesssim & N_{(1)}^{-\frac{1}{2}}N_{(3)}^{-\sigma}\prod_{j=4}^{10}N_{(j)}^{-\sigma+\frac{3}{2}}\cdot  \|w\|_{H^{\sigma}}^8+N_{(1)}^{-1}N_{(3)}^{-\sigma+2}\prod_{j=4}^{10} N_{(3)}^{-\sigma+\frac{3}{2}}\cdot \|w\|_{H^{\sigma}}^8\\
			\lesssim & N_{(1)}^{-\frac{1}{2}} \|w\|_{H^{\sigma}}^8,
		\end{align*}
		which is conclusive,
		thanks to the fact that $s\geq 10$ and that $\sigma$ is close to $s-\frac{3}{2}$.
		
		\vspace{0.3cm}
		\noi
		$\bullet$ {\bf Subcase: Contribution $\mathcal{R}_{1,3}^{(B2)}$: }
		Next we estimate $\mathcal{R}_{1,3}^{(B2)}$ for which $k_{(1)},k_{(2)}\in\{p_1,p_2,p_3,p_4,p_5\}$. By symmetry of indices, we assume that $k_{(1)}=p_1,k_{(2)}=p_2$, then
		$$ \sum_{j=3}^5|p_j|+\sum_{j=2}^6|k_j|\leq |p_1|^{\theta}+|p_2|^{\theta}
		$$
		on $\Lambda_{B2}$. Similarly, we decompose $\mathcal{R}_{1,3}^{(B2)}$ by dyadic sum
		$$ \sum_{M_1,\cdots,M_6, P_1,\cdots,P_5}\mathcal{R}_{1,3}^{(B2)}(M_1,\cdots,P_5),
		$$
		and this time, $P_1\sim P_2\sim N_{(1)}$ and $\max\{M_1,M_2,\cdots,M_6\}\lesssim N_{(3)}$. In particular, the energy weight $\psi_{2s}(\vec{k})$ satisfies $|\psi_{2s}(\vec{k})|\lesssim N_{(3)}^{2s}$ which is much smaller than in the previous case. 
		
		The pairing contribution $p_1=p_2$ in $\mathcal{R}_{1,3}^{(B2)}(M_1,\cdots,P_5)$ can be controlled in the same way by the same bound as \eqref{innerpairingbound}. We omit the detail.
		
		For the non-pairing contribution in $\mathcal{R}_{1,3}^{(B2)}$, again we apply the Wiener chaos estimate. 
		Denote $\mathcal{B}_{\ll P_{1}}$ the $\sigma$-algebra generated by $(g_k)_{|k|\leq P_1/100}$. Without loss of generality, we assume that $P_1\sim N_{(1)}$ is large enough such that $N_{(3)}\lesssim N_{(1)}^{\theta}\ll \frac{P_1}{100}$. Consequently, with respect to $\mu_s$, $w^{M_2},\cdots, w^{M_6}, w^{P_3},w^{P_4}, w^{P_5}$ are independent of $w^{P_1},w^{P_2}$. Recall that
		\begin{align*} &\mathcal{R}_{1,3}^{(B2)}(M_1,\cdots,P_5)\\= &\sum_{\substack{p_1\neq p_2\\
					|p_1|\sim P_1,|p_2|\sim P_2 } }\!\!\!\!w_{p_1}^{P_1}\ov{w}_{p_2}^{P_2}\!\!\!\!\!\!\sum_{\substack{k_2,\cdots,k_6, p_3,p_4,p_5
					\\ 
					p_1-\cdots+p_5-k_2+\cdots-k_6=0  } }\!\!\!\!\!\!\frac{\psi_{2s}(\vec{k})}{\Omega(\vec{k})}\Big(1-\chi\Big(\frac{\Omega(\vec{k})}{\lambda(\vec{k})^{\delta_0}}\Big)\Big)w_{p_3}^{P_3}\ov{w}_{p_4}^{P_4}w_{p_5}^{P_5}\ov{w}_{k_2}^{M_2}\cdots \ov{w}_{k_6}^{M_6}.
		\end{align*}
		By conditional Wiener chaos estimate, we have
		\begin{align*}
			\|\mathcal{R}_{1,3}^{(B2)}(M_1,\cdots, P_5)\mathbf{1}_{B_R^{H^{\sigma}}	}(w)\|_{L^p(\dd\mu_s)}\leq &\|\mathcal{R}_{1,3}^{(B2)}(M_1,\cdots, P_5)\mathbf{1}_{B_R^{H^{\sigma}}	}(\mathbf{P}_{\ll P_1}w)\|_{L^p(\dd\mu_s)}\\
			=&\big\| \|\mathcal{R}_{1,3}^{(B2)}(M_1,\cdots, P_5)\mathbf{1}_{B_R^{H^{\sigma}}	}(\mathbf{P}_{\ll P_1}w)\|_{L^p(\dd\mu_s|\mathcal{B}_{\ll P_1})} \big\|_{L^p(\dd\mu_s)}\\
			\lesssim &\, p\big\| \|\mathcal{R}_{1,3}^{(B2)}(M_1,\cdots, P_5)\|_{L^2(\dd\mu_s|\mathcal{B}_{\ll P_1})} \cdot \mathbf{1}_{B_R^{H^{\sigma}}	}(\mathbf{P}_{\ll P_1}w)\big\|_{L^p(\dd\mu_s)}.
		\end{align*}
		As in the estimate of $\mathcal{R}_{1,3}^{(B1)}$, here it suffices to show that
		\begin{align}\label{bdrandom:R13B2}
			\|\mathcal{R}_{1,3}^{(B2)}\|_{L^2(\dd\mu_s|\mathcal{B}_{\ll P_1})}
			\lesssim N_{(1)}^{-\frac{1}{2}}\|w\|_{H^{\sigma}}^8.
		\end{align}
		
	Thanks to the non-pairing condition $p_1\neq p_2$, we deduce that\footnote{In the summation below, we implicitly assume that $|p_1|\sim P_1, |p_2|\sim P_2.$}
		\begin{align*}
			\|\mathcal{R}_{1,3}^{(B2)}(M_1,\cdots,P_5)&\|_{L^2(\dd\mu_s|\mathcal{B}_{\ll P_1})}\\ \lesssim & (P_1P_2)^{-s}\Big(\sum_{ p_1\neq p_2}\Big(\sum_{\substack{k_2,\cdots,k_6,p_3,p_4,p_5\\
					p_1-\cdots+p_5-k_2+\cdots-k_6=0 } } N_{(3)}^{2s}|w^{P_3}_{p_3}w^{P_4}_{p_4}w^{P_5}_{p_5}w_{k_2}^{M_2}\cdots w_{k_6}^{M_6}|
			\Big)^2\Big)^{\frac{1}{2}}\\
			\lesssim & N_{(1)}^{-2s}N_{(3)}^{2s}\Big(\sum_{p_1-p_2+\cdots+p_5-k_2+\cdots-k_6=0} |w^{P_3}_{p_3}w^{P_4}_{p_4}w^{P_5}_{p_5}w_{k_2}^{M_2}\cdots w_{k_6}^{M_6}|^2 \Big)^{\frac{1}{2}}\\
			&\hspace{1.5cm}\times \Big(\sup_{p_1\neq p_2}\sum_{\substack{k_2,\cdots,k_6,p_3,p_4,p_5\\ 
					p_1-\cdots+p_5-k_2+\cdots-k_6=0
			} } \!\!\!\!\!1\Big)^{\frac{1}{2}}\\
			\lesssim & N_{(1)}^{-2s+\frac{3}{2}}N_{(3)}^{2s}\|w\|_{H^{\sigma}}^{8}\lesssim N_{(1)}^{-2s+\frac{3}{2}+2s\theta}\|w\|_{H^{\sigma}}^{8}\lesssim N_{(1)}^{-\frac{1}{2}}\|w\|_{H^{\sigma}}^8,
		\end{align*}
		thanks to the fact that $s\geq 10$ that $\theta$ is close to $\frac{1}{3}$ and the restriction $N_{(3)}\lesssim N_{(1)}^{\theta}$. 
		%The obtained bound is conclusive, since for the range of $s$, the power of $N_{(1)}$ is negative. 
		
		%%%%%%%%%%%%%%%%%%%%%%%%%%%%%%%%%%%%%%%%%%%%%%%%%%%5
		\vspace{0.3cm}
		
		\begin{flushleft}
			$\bullet$ Estimate of Type C contribution:
		\end{flushleft}	
		
		Without loss of generality, we assume that $k_{(1)}=p_1$ and $k_{(2)}=k_2$, since the other cases can be treated in the same way. In particular, $P_1\sim M_2\sim N_{(1)}$. We write
		\begin{align*}
			&\mathcal{R}_{1,3}^{(C)}(M_1,\cdots,P_5)\\
			=&\!\!\!\!\!\!\sum_{\substack{p_1\neq k_2\\
					|p_1|\sim P_1, |k_2|\sim M_2 }  }\!\!\!\! w_{p_1}^{P_1}\ov{w}_{p_2}^{P_2}\!\!\!\!\!\!
			\sum_{\substack{p_2\cdots,p_5, k_3,\cdots,k_6\\
					p_1-\cdots+p_5-k_2+\cdots-k_6=0 } }\!\!\!\!\!\!
			\frac{\psi_{2s}(\vec{k}) }{\Omega(\vec{k}) }\Big(1-\chi\Big(\frac{\Omega(\vec{k})}{\lambda(\vec{k})^{\delta_0} } \Big) \Big)\ov{w}_{p_2}^{P_2}\cdots w_{p_5}^{P_5}w_{k_3}^{M_3}\cdots \ov{w}_{k_6}^{M_6}.
		\end{align*}
		
		Denote $\mathcal{B}_{\ll P_{1}}$ the $\sigma$-algebra generated by $(g_k)_{|k|\leq P_1/100}$. Without loss of generality, we assume that $P_1\sim N_{(1)}$ is large enough such that $N_{(3)}\lesssim N_{(1)}^{\theta}\ll \frac{P_1}{100}$. Consequently, with respect to $\mu_s$, $w^{M_3},\cdots w^{M_6}, w^{P_2},\cdots, w^{P_5}$ are independent of $w^{P_1},w^{M_2}$.

		By conditional Wiener chaos estimate, we have
		\begin{align*}
			\|\mathcal{R}_{1,3}^{(C)}(M_1,\cdots, P_5)\mathbf{1}_{B_R^{H^{\sigma}}	}(w)\|_{L^p(\dd\mu_s)}\leq &\|\mathcal{R}_{1,3}^{(C)}(M_1,\cdots, P_5)\mathbf{1}_{B_R^{H^{\sigma}}	}(\mathbf{P}_{\ll  P_1}w)\|_{L^p(\dd\mu_s)}\\
			=&\big\| \|\mathcal{R}_{1,3}^{(C)}(M_1,\cdots, P_5)\mathbf{1}_{B_R^{H^{\sigma}}	}(\mathbf{P}_{\ll P_1}w)\|_{L^p(\dd\mu_s|\mathcal{B}_{\ll P_1})} \big\|_{L^p(\dd\mu_s)}\\
			\lesssim &\, p\big\| \|\mathcal{R}_{1,3}^{(C)}(M_1,\cdots, P_5)\|_{L^2(\dd\mu_s|\mathcal{B}_{\ll P_1})} \cdot \mathbf{1}_{B_R^{H^{\sigma}}	}(\mathbf{P}_{\ll P_1}w)\big\|_{L^p(\dd\mu_s)}.
		\end{align*}
		As in the estimate for Type (B) terms, it suffices to show that
		\begin{align*}
			\|\mathcal{R}_{1,3}^{(C)}(M_1,\cdots, P_5)\|_{L^2(\dd\mu_s|\mathcal{B}_{\ll P_1})}\lesssim & N_{(1)}^{-\frac{1}{2}}\|w\|_{H^{\sigma}}^{8}.
		\end{align*}
		Since $p_1\neq k_2$(recall that the contribution where $p_1=k_2$ is contained in $\mathcal{S}_{1,1}$), we estimate\footnote{In the summation below, we implicitly assume that the sum is taken in the range $|p_1|\sim P_1, |k_2|\sim M_2$. } 
		\begin{align}\label{bd:R13C}
			&\|\mathcal{R}_{1,3}^{(C)}(M_1,\cdots, P_5)\|_{L^2(\dd\mu_s|\mathcal{B}_{\ll P_1})}\notag  \\
			\lesssim & (P_1M_2)^{-s}\Big( 
			\sum_{p_1\neq k_2}\Big(\!\!\!\!\!\!
			\sum_{\substack{p_2,\cdots,p_5,k_3,\cdots,k_6\\
					p_1-\cdots+p_5-k_2+\cdots-k_6=0 } }\!\!\!\! \frac{\psi_{2s}(\vec{k})}{\Omega(\vec{k})}\Big(1-\chi\Big(\frac{\Omega(\vec{k})}{\lambda(\vec{k})^{\delta_0} } \Big) \Big)\ov{w}_{p_2}^{P_2}\cdots w_{p_5}^{P_5}w_{k_3}^{M_3}\cdots \ov{w}_{k_6}^{M_6}
			\Big)^2
			\Big)^{\frac{1}{2}} \notag  \\
			\lesssim &N_{(1)}^{-2s}\Big(
			\sum_{p_1\neq k_2 } \sum_{\substack{p_2\cdots,p_5,k_3,\cdots,k_6\\
					p_1-\cdots+p_5-k_2+\cdots-k_6=0
			} } \frac{|\psi_{2s}(\vec{k})|^2}{1+|\Omega(\vec{k})|^2}
			|w_{k_{(3)}}^{N_{(3)}}|^2
			\Big)^{\frac{1}{2}}\Big(\sup_{p_1\neq k_2}
			\sum_{\substack{p_2\cdots,p_5,k_3,\cdots,k_6\\
					p_1-\cdots+p_5-k_2+\cdots-k_6=0
			} } \prod_{j=4}^{10}|w_{k_{(j)}}^{N_{(j)}}|^2
			\Big)^{\frac{1}{2}} \notag 
			\\
			\lesssim & N_{(1)}^{-2s}\Big(\prod_{j=4}^{10}\|w^{N_{(j)}}\|_{L^2}\Big)\cdot \Big(\sum_{\substack{p_1\neq k_2,k_3,\cdots,k_6,p_2\cdots,p_5\\
					p_1-\cdots+p_5-k_2+\cdots -k_6=0
			} } \frac{|\psi_{2s}(\vec{k})|^2 }{1+|\Omega(\vec{k})|^2}  |w_{k_{(3)}}^{N_{(3)}}|^2 
			\Big)^{\frac{1}{2}}.
		\end{align}
		The estimate the last sum on the right hand side is very similar as for the estimate of $\mathcal{R}_{1,3}^{(B1)}$. The only difference here is that 
		we might have the pairing of $k_1=p_1-p_2+p_3-p_4+p_5$ and $k_2$, although $p_1\neq k_2$.  Note that in the case of pairing 
		$$
		k_1=p_1-(p_2-p_3+p_4-p_5)
		%\mathbf{p}
		=k_2
		$$
		we have $|\psi_{2s}(\vec{k})|\lesssim N_{(3)}^{2s}$, thus we control 
		the paired contribution crudely by
		\begin{align*}
			\sum_{p_1\neq k_2}\sum_{\substack{p_2\cdots,p_5,k_3,\cdots,k_6\\
					p_1-\cdots+p_5=k_2\\
					k_3-k_4+k_5-k_6=0}}  N_{(3)}^{4s}|w_{k_{(3)}}^{N_{(3)}}|^2\lesssim 	N_{(1)}^3N_{(3)}^{4s}\|w^{N_{(3)}}\|_{L^2}^2\prod_{j=4}^{10}N_{(j)}^3
		\end{align*}
		For the non-pairing contribution, we can argue exactly as the last part of the estimate of $\mathcal{R}_{1,3}^{(B1)}$ by using Lemma \ref{threevectorcounting}:
		\begin{align}\label{lastsumTypeC}
			\sum_{k_3\cdots,k_6,p_2\cdots,p_5}|w_{k_{(3)}}^{N_{(3)}}|^2\sum_{\substack{k_1\neq k_2\\
					k_1=p_1-p_2+p_3-p_4+p_5\\
					k_1-k_2+k_3-k_4+k_5-k_6=0
			} }\frac{M_{(1)}^{4(s-1)}(M_{(3)}^4+|\Omega(\vec{k})|^2 ) }{1+|\Omega(\vec{k})|^2 }.
		\end{align}
		For fixed $p_2,\cdots,p_5, k_3,\cdots,k_6$,
		$$ |\Omega(\vec{k})|=|p_1-\mathbf{p}|^2-|k_2|^2+\mathbf{c}
		$$
		with $\mathbf{p}=p_2-p_3+p_4-p_5$ and $\mathbf{c}=|k_3|^2-|k_4|^2+|k_5|^2-|k_6|^2.$ When $\mathbf{p}\neq p_1-k_2$, by Lemma \ref{threevectorcounting}, the choices of $p_1,k_2$ such that $p_1-\mathbf{p}-k_2=\mathbf{k}=k_3-k_4+k_5-k_6$ are bounded by $M_{(1)}^2$, hence \eqref{lastsumTypeC} is bounded by
		$$ N_{(1)}^{4(s-1)}(N_{(3)}^4N_{(1)}^2+N_{(1)}^3)\|w^{N_{(3)}}\|_{L^2}^2\prod_{j=4}^{10}N_{(j)}^3.
		$$  
		Therefore, the right hand side of \eqref{bd:R13C} can be bounded by
		$$ \big(N_{(1)}^{-2s+\frac{3}{2}}N_{(3)}^{2s}+N_{(1)}^{-2}(N_{(1)}^{\frac{3}{2}}+N_{(3)}^2N_{(1)})\big)\prod_{j=3}^{10}\|w^{N_{(j)}}\|_{L^2}\cdot \Big(\prod_{j=4}^{10}N_{(j)}^{\frac{3}{2}}\Big)\lesssim N_{(1)}^{-\frac{1}{2}}\|w\|_{H^{\sigma}}^8,
		$$
thanks to the fact that  $s\geq 10$, that  $\theta<\frac{1}{3}$, $\theta$ close to $\frac{1}{3}$ and that $N_{(3)}\lesssim N_{(1)}^{\theta}$. This completes the proof of Proposition \ref{prop:R13}.
\end{proof}	
%%%%%%%%%%%%%%%%%%%%%%%%%%%%%%%%%%%%%%%%%%%%%%%%%%%%%%%%%%%%%
%%%%%%%%%%%%%%%%%%%%%%%%%%%%%%%%%%%%%%%%%%%%%%%%%%%%%%%%%%%%%%%	
%%%%%%%%%%%%%%%%%%%%%%%%%%%%%%%%%%%%%%%%%%%%%%%%%%%%%%%%%%%%%
\appendix	
\section{Long time approximations}
In this appendix, we prove the approximation results used in Section \ref{proof:quasi}. The proof is a consequence of the global regularity theory of \cite{IoPau}.
\subsection{Ingredients in the global regularity theory for the energy critical NLS on $\mathbb{T}^3$}
In the sequel, we follow the notations in \cite{IoPau} and \cite{HTTz} about basic definitions and properties for function spaces $U^p, V^p, U_{\Delta}^p, V_{\Delta}^p$ related to critical problems. Let us briefly recall some related function spaces as well as multilinear estimates from  \cite{HTTz} and \cite{IoPau} that we will use. For $s\in\mathbb{\R}$,
\begin{align}\label{def:XsY^s} 
&\|u\|_{\widetilde{X}^s(\R)}:=\Big(\sum_{k\in\mathbb{Z}^3} \langle k\rangle^{2s}\|\e^{it|k|^2}(\mathcal{F}u)(t,k) \|_{U_t^2}^2 \Big)^{\frac{1}{2}},\\
&\|u\|_{\widetilde{Y}^s(\R)}:=\Big(\sum_{k\in\mathbb{Z}^3} \langle k\rangle^{2s}\|\e^{it|k|^2}(\mathcal{F}u)(t,k) \|_{V_t^2}^2 \Big)^{\frac{1}{2}}.
\end{align}
We have the continuous embedding property:
$$ \widetilde{X}^s(\R)\hookrightarrow \widetilde{Y}^s(\R)\hookrightarrow L^{\infty}(\R; H^s(\T^3)).
$$
For intervals $I\subset\R$, the space $X^s(I)$ is defined via the restriction norms:
$$ \|u\|_{X^s(I)}:=\sup_{J\subset I,|J|\leq 1 }\inf_{v\mathbf{1}_J(t)=u\mathbf{1}_J(t)}\|v\|_{\widetilde{X}^s}.
$$
Similarly for the space $Y^s(I)$. Note that by definition, for linear solution $u(t)=\e^{it\Delta}\phi$,
\begin{align}\label{linearestimate}
 \|u(t)\|_{X^s(I)}\leq \|\phi\|_{H^s(\T^3)}.
\end{align}
 The critical Strichartz type norm is defined via the norm
$$ \|u\|_{Z(I)}:=\sum_{p\in\{p_0,p_1\} }\sup_{J\subset I, |J|\leq 1}\Big(\sum_{N\in 2^{\mathbb{N}} }N^{5-\frac{p}{2}}\|\mathbf{P}_Nu(t)\|_{L_{t,x}^p(J\times\T^3)}^p
\Big)^{\frac{1}{p}},\quad p_0=4+1/10,\, \,p_1=100,
$$
where $\mathbf{P}_N=\mathbf{P}_{\leq N}-\mathbf{P}_{\leq N/2}$, and $\mathbf{P}_{\leq N}$ are square Littlewood-Paley projectors defined in Section~2 of \cite{IoPau}.

By definition we remark that if $T\geq 1$ and $I_T=[-T,T]$,
$$ \|u\|_{Z(I_T)}\sim_T \sum_{p\in\{p_0,p_1\} }\|u(t)\|_{L_t^p([-T,T];\widetilde{B}_{p,p}^{\frac{5}{p}-\frac{1}{2}}(\T^3) ) },
$$
while if $T<\frac{1}{2}$,
$$ \|u\|_{Z(I_T)}= \sum_{p\in\{p_0,p_1\} }\|u(t)\|_{L_t^p([-T,T];\widetilde{B}_{p,p}^{\frac{5}{p}-\frac{1}{2}}(\T^3) ) }
$$
where $\widetilde{B}_{p,q}^{s}$ are Besov spaces related to the Littlewood-Paley projectors $\mathbf{P}_N$. 

The inhomogeneous term on an interval $I=(a,b)$ will be controlled by the $N^s(I)$ norm:
$$ \|F\|_{N^s(I)}:=\Big\|\int_a^t \e^{i(t-t')\Delta}F(t')\dd t' \Big\|_{X^s(I)}.
$$
It turns out that (\cite{HTTz}, Proposition 2.11) 
\begin{align*}
\|F\|_{N^s(I)}\leq \sup_{\substack{G\in Y^{-s}(I)
\\
\|G\|_{Y^{-s}(I)}\leq 1 } }\Big|\int_I\int_{\T^3}F(t,x)\ov{G}(t,x)\dd x \dd t \Big|.
\end{align*}
Recall the key Strichartz estimate:
\begin{lem}[\cite{IoPau}, Corollary 2.2]\label{Strichartz}
Let $p\in(4,\infty)$ and $\mathbf{P}_{C}$ the frequency projector to some cube $C$ of size $N$. For any interval $I\subset\R$, $|I|\leq 1$, 
$$ \|\mathbf{P}_Cu\|_{L_{t,x}^p(I\times \T^3)}\lesssim N^{\frac{3}{2}-\frac{5}{p}}\|u\|_{U_{\Delta}^p(I;L_x^2)},
$$ 
where the implicit constant is independent of intervals $I$.
\end{lem}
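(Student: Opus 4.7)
The estimate is a cube-localized Strichartz inequality for the linear Schr\"odinger flow on $\T^3$, lifted from free solutions to $U^p_\Delta$-space via the atomic transfer principle. The plan therefore splits into three steps.

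First, I would reduce to a purely linear estimate by exploiting the atomic structure of $U^p_\Delta$. By definition, any $u\in U^p_\Delta(I;L^2_x)$ admits a decomposition $u=\sum_j c_j a_j$ into $U^p_\Delta$-atoms $a_j=\sum_k \mathbf{1}_{[t_k^{(j)},t_{k+1}^{(j)})}(t)\,e^{it\Delta}\phi_k^{(j)}$ with $\sum_k \|\phi_k^{(j)}\|_{L^2}^p\le 1$ and $\sum_j |c_j|\lesssim \|u\|_{U^p_\Delta}$. Since $\mathbf{P}_C$ commutes with $e^{it\Delta}$ and with sharp time cutoffs, the triangle inequality in $L^p_{t,x}$ together with the disjointness of the intervals $[t_k^{(j)},t_{k+1}^{(j)})$ reduces matters to the single-profile estimate
$$
\|e^{it\Delta}\mathbf{P}_C\phi\|_{L^p_{t,x}(I\times\T^3)}\lesssim N^{\frac{3}{2}-\frac{5}{p}}\|\phi\|_{L^2(\T^3)},\qquad |I|\le 1.
$$

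Second, I would reduce a general integer cube $C$ of side $N$ to a centered cube. Writing $C=k_0+([-N,N]^3\cap\Z^3)$ and setting $\widetilde\phi(x)=e^{-ik_0\cdot x}\phi(x)$, a direct Galilean computation shows $|e^{it\Delta}\mathbf{P}_C\phi(x)|=|e^{it\Delta}\mathbf{P}_{[-N,N]^3}\widetilde\phi(x-2tk_0)|$; translation invariance of the $L^p_{t,x}$-norm on $\R\times\T^3$ and the identity $\|\widetilde\phi\|_{L^2}=\|\phi\|_{L^2}$ preserve both sides.

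Third, for centered cubes I would invoke the $\ell^2$-decoupling theorem of Bourgain-Demeter for the truncated paraboloid, which yields the sharp bound without $N^\epsilon$ loss for all $p\geq \tfrac{10}{3}$, hence a fortiori for $p\in(4,\infty)$. In the range $p>4$ the estimate is strictly subcritical and one may alternatively interpolate between Bourgain's $L^4$-Strichartz bound on $\T^3$ (proved via divisor/Gauss-sum counting of representations of integers on spheres) and the trivial Bernstein endpoint $\|e^{it\Delta}\mathbf{P}_C\phi\|_{L^\infty_{t,x}}\lesssim N^{3/2}\|\phi\|_{L^2}$, so the heavy decoupling machinery is not strictly required.

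The chief obstacle is of course the linear Strichartz estimate itself, which encapsulates the deep Fourier-analytic theory of exponential sums on the paraboloid; but its range $p>4$ is comfortably subcritical, so a clean interpolation argument suffices and avoids the logarithmic losses that plagued the pre-decoupling literature. This is precisely the route taken in Ionescu-Pausader \cite{IoPau}, from whose Corollary 2.2 the statement is lifted here.
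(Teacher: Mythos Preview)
The paper does not supply a proof of this lemma; it is quoted verbatim from Ionescu--Pausader \cite{IoPau}, Corollary~2.2, as an external input to the appendix. Your proposal correctly recognizes this and sketches the argument essentially as it appears in \cite{IoPau}: atomic transfer from $U^p_\Delta$ to free solutions, Galilean reduction to centered cubes, and then the linear Strichartz estimate of Bourgain for $p>4$. Since there is no proof in the present paper to compare against, and since you explicitly acknowledge the result is lifted from \cite{IoPau}, your write-up is appropriate and accurate.
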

As a consequence of Lemma~\ref{Strichartz} and the embedding $X^0(I)\hookrightarrow  U_{\Delta}^p(I;L_x^2)$ (basically since $U^2\hookrightarrow U^p$) for $p>2$, we have
\begin{align}\label{Strichartzcor:}
\|u\|_{Z(I)}\lesssim \|u\|_{X^1(I)}
\end{align}
for any interval $I$, where the implicit constant is uniform. The key multilinear estimate we will use reads:
\begin{lem}[\cite{IoPau}, Lemma 3.2]\label{multilinear}
Let $\sigma\geq 1$.	For $u_j\in X^1(I)$, $j=1,2,3,4,5$, $|I|\leq 1$.  the estimate
\begin{align}\label{Multilinears} \Big\|\prod_{j=1}^5u_j^{\pm} \Big\|_{N^{\sigma}(I)}\lesssim \sum_{\sigma\in\mathfrak{S}_5 }\|u_{\sigma(1)}\|_{X^{\sigma}(I)}\prod_{j\geq 2}\|u_{\sigma(j)}\|_{Z(I)}^{\frac{1}{2}}\|u_{\sigma(j)}\|_{X^1(I)}^{\frac{1}{2}}
\end{align}
holds true,
where $\mathfrak{S}_5$ is the permutation group of $5$ elements and $u_j^{\pm}\in\{u_j,\ov{u}_j \}$, and the implicit constant in the inequality is independent of intervals $I$ such that $|I|\leq  1$.
\end{lem}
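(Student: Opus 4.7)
I would proceed by duality. Using the characterization of the $N^\sigma$-norm recalled just before Lemma \ref{Strichartz}, it suffices to control, uniformly for $v \in Y^{-\sigma}(I)$ with $\|v\|_{Y^{-\sigma}(I)} \leq 1$, the sextilinear spacetime pairing
$$
\mathcal{I}(v,u_1,\ldots,u_5) := \int_I \int_{\T^3} v(t,x)\,\prod_{j=1}^5 u_j^\pm(t,x)\,dx\,dt
$$
by the right-hand side of the claim. I then Littlewood-Paley decompose $v = \sum_N v_N$ and $u_j = \sum_{N_j} u_{j,N_j}$. The convolution constraint on Fourier support forces $N \lesssim \max_j N_j$, and by symmetry together with the sum over $\mathfrak{S}_5$ appearing on the right-hand side I may assume $N_1 \geq N_2 \geq \cdots \geq N_5$ and allocate the $\sigma$-derivative weight $N_1^\sigma$ to the $u_1$ factor, to be paid by $\|u_1\|_{X^\sigma(I)}$.

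The heart of the argument is the single dyadic block estimate
$$
|\mathcal{I}(v_N,u_{1,N_1},\ldots,u_{5,N_5})| \lesssim \Big(\frac{N_{(2)}}{N_{(1)}}\Big)^{\eta}\,\|v_N\|_{Y^{-\sigma}}\, N_1^{\sigma}\|u_{1,N_1}\|_{X^1}\, \prod_{j=2}^5 \|u_{j,N_j}\|_Z^{1/2}\|u_{j,N_j}\|_{X^1}^{1/2}
$$
for some small $\eta>0$. I would further localize each factor of scale $N_j$, $j\geq 2$, into cubes of side $\sim N_2$, which are almost orthogonal in $U^2_\Delta$; apply H\"older in spacetime with exponents from $\{2, p_0, p_1\}$ matching the exponents in the definition of $Z$; and invoke the sharp Strichartz bound of Lemma \ref{Strichartz} together with the embedding $X^1 \hookrightarrow U^p_\Delta(I; L^2_x)$ to turn each cube-localized $L^p_{t,x}$ norm into a $U^p_\Delta(L^2_x)$ norm. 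The characteristic $\|\cdot\|_Z^{1/2}\|\cdot\|_{X^1}^{1/2}$ geometric-mean pattern is then extracted by Cauchy-Schwarz interpolation between (i) the critical Strichartz bound, which recovers the $Z$-norm exactly, and (ii) a subcritical bound obtained by pairing the $X^1$-norm with a small power of $N_j$ absorbed by the derivative weight. Summing over cubes is handled by Bessel-type almost orthogonality; the final dyadic summation over $N,N_1,\ldots,N_5$ is closed by Cauchy-Schwarz against the $\ell^2$-weighted definitions of $\widetilde{X}^\sigma$ and $\widetilde{Y}^{-\sigma}$, thanks to the gain $(N_{(2)}/N_{(1)})^{\eta}$.

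The main obstacle is the \emph{high-high} regime where $N_1 \sim N_2$ (and possibly further $N_j$ comparable). In that regime the factor $(N_{(2)}/N_{(1)})^{\eta}$ degenerates to $O(1)$, and the cube decomposition at scale $N_2 \sim N_1$ offers no gain beyond the Littlewood-Paley blocks themselves. One must then replace the naive H\"older--Strichartz scheme by a bilinear argument pairing the two highest-frequency factors. This is where the specific choice of the exponents $p_0 = 4+1/10$ and $p_1 = 100$ in the definition of $Z$ is crucial: the endpoint $L^{p_0}_{t,x}$ Strichartz estimate applied to a cube of scale $N_1$, combined with a bilinear orthogonality argument localizing the output frequency to $N$, still produces a small quantitative gain. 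Verifying that this gain is enough to close the dyadic summation and reconstruct the $\|\cdot\|_Z^{1/2}\|\cdot\|_{X^1}^{1/2}$ interpolation in this critical regime is the technical heart of the lemma.
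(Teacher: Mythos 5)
The paper does not prove Lemma~\ref{multilinear} from scratch: it cites Ionescu--Pausader for the case $\sigma=1$ and then merely records, without proof, the refined dyadic estimate
\begin{align*}
& \int_{I\times\T^3}\Big|\sum_{N_0,N_1\geq N_2\geq N_3\geq N_4\geq N_5 }\prod_{j=0}^5\mathbf{P}_{N_j}u_j^{\pm}  \Big|dxdt \\ \lesssim
&\Big(\frac{N_0}{N_1}\Big)^{\sigma}\Big(\frac{N_5}{N_1}+\frac{1}{N_3} \Big)^{\delta}\Big(\frac{N_4}{N_0}+\frac{1}{N_2}\Big)^{\delta}
\|\mathbf{P}_{N_1}u_1\|_{Y^{\sigma}(I)}\|\mathbf{P}_{N_0}u_0\|_{Y^{-\sigma}(I)} \prod_{j=2}^5\|\mathbf{P}_{N_j}u_j\|_{Z(I)}^{\frac{1}{2}}\|\mathbf{P}_{N_j}u_j\|_{X^1(I)}^{\frac{1}{2}}
\end{align*}
to indicate how the $\sigma>1$ case follows. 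Your overall strategy --- duality against $Y^{-\sigma}$, Littlewood--Paley decomposition, cube localization at the second-highest scale, Strichartz (Lemma~\ref{Strichartz}) together with $X^1\hookrightarrow U^p_\Delta$, and a Cauchy--Schwarz interpolation extracting the $\|\cdot\|_Z^{1/2}\|\cdot\|_{X^1}^{1/2}$ pattern --- is indeed the skeleton of the Ionescu--Pausader argument the paper is invoking, and allocating the extra $\sigma-1$ derivatives to the highest-frequency factor is also the correct way to promote $\sigma=1$ to general $\sigma$.

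Where your proposal has a genuine gap is in the form of the single-block estimate. The factor $(N_{(2)}/N_{(1)})^{\eta}$ you posit is not what one proves, and as you yourself observe it collapses to $O(1)$ precisely in the regime $N_1\sim N_2$ that dominates a critical problem; at that point your argument is no longer a proof sketch but a restatement of the difficulty. The correct refined block estimate (the one displayed above) does \emph{not} gain from a ratio of the two largest dyadic scales; its summability comes instead from the two auxiliary factors $\big(\tfrac{N_5}{N_1}+\tfrac{1}{N_3}\big)^{\delta}$ and $\big(\tfrac{N_4}{N_0}+\tfrac{1}{N_2}\big)^{\delta}$, together with the $\ell^2$-dyadic structure built into $\|\cdot\|_{Y^{\pm\sigma}}$ for the two highest frequencies. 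These factors retain a gain even when $N_0\sim N_1\sim N_2$ (via $1/N_2$, $1/N_3$), and closing the dyadic sums uses Cauchy--Schwarz in $N_0,N_1$ against the $\ell^2$-weights of $Y^\sigma,Y^{-\sigma}$ rather than geometric decay in $N_{(2)}/N_{(1)}$. Your proposed fix for the high-high regime --- ``a bilinear argument pairing the two highest-frequency factors'' --- is pointing in the right direction (that is how the $1/N_3$- and $1/N_2$-type gains actually arise), but without stating the quantitative form of the gain you cannot close the argument, and in a critical problem this is exactly the step that cannot be left vague.
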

We remark that \cite{IoPau} treats the case $\sigma=1$. For $\sigma>1$, the proof follows in the similar way by the more precise estimate
\begin{align*}
& \int_{I\times\T^3}\Big|\sum_{N_0,N_1\geq N_2\geq N_3\geq N_4\geq N_5 }\prod_{j=0}^5\mathbf{P}_{N_j}u_j^{\pm}  \Big|\dd x\dd t \\ \lesssim 
&\Big(\frac{N_0}{N_1}\Big)^{\sigma}\Big(\frac{N_5}{N_1}+\frac{1}{N_3} \Big)^{\delta}\Big(\frac{N_4}{N_0}+\frac{1}{N_2}\Big)^{\delta}
\|\mathbf{P}_{N_1}u_1\|_{Y^{\sigma}(I)}\|\mathbf{P}_{N_0}u_0\|_{Y^{-\sigma}(I)} \prod_{j=2}^5\|\mathbf{P}_{N_j}u_j\|_{Z(I)}^{\frac{1}{2}}\|\mathbf{P}_{N_j}u_j\|_{X^1(I)}^{\frac{1}{2}}
\end{align*}
for some $\delta>0$. We omit the details of the proof.
\vspace{0.3cm}

Finally, we recall the global regularity theory of Ionescu-Pausader. Following Section 6 of \cite{IoPau}, given $R>0$ and $\tau\geq 0$, consider the non-negative function (possibly $\infty$)
$$ \Sigma(R,\tau):=\sup\big\{\|u\|_{Z(I)}^2:\; H(u)\leq R, |I|\leq \tau \big\},
$$
where $H(u)$ is the energy of $u$ and the supremum is taken over all strong solutions of \eqref{NLS} of energy less than or equal to $R$ and all intervals $I$ of length $|I|\leq \tau$. As an increasing function in $\tau$, the limit (possibly $\infty$ in a priori)
$$ \Sigma_{*}(R):=\lim_{\tau\rightarrow 0^+}\Sigma(R,\tau)
$$
exists. Moreover, $\Sigma(R,\tau)$ is quasi subadditive in $\tau$:
$$ \Sigma(R,\tau_1+\tau_2)\lesssim \Sigma(R,\tau_1)+\Sigma(R,\tau_2)
$$ for any $\tau_1,\tau_2>0$.

The global regularity result of Ionescu-Pausader can be stated as:
\begin{thm}[\cite{IoPau}]\label{GWP}
For any $R>0$, $\Sigma_*(R)<\infty$. Consequently, for any $\tau>0$, $\Sigma(R,\tau)<\infty$ and moreover
$$\Sigma(R,\tau)\leq \Sigma(R,1)\e^{C_0(1+\tau)},
$$
where $C_0>0$ is an absolute constant. In particular, for any $\phi\in H^1(\T^3)$ of energy smaller than or equal to $R$, the strong solution $u(t)$ of \eqref{NLS} with initial data $\phi$ is global and
$$ \|u(t)\|_{Z([0,\tau])}\leq \Sigma(R,1)\e^{C_0(1+\tau)}.
$$ 
Finally,  
$$ \|u\|_{X^1([0,\tau])}\leq C(R,\|u\|_{Z([0,\tau])} ).
$$
\end{thm}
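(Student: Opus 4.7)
The plan is to follow the concentration--compactness and rigidity framework of Kenig--Merle, adapted to $\T^3$, reducing the a priori $Z$-norm bound to a rigidity assertion that rules out a minimal blow-up element.

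First, I would set up the small-data and local theory. By the multilinear estimate of Lemma~\ref{multilinear} combined with the embedding $X^1(I)\hookrightarrow Z(I)$ from \eqref{Strichartzcor:}, a standard contraction argument on $X^1$ shows that $\Sigma_*(R)<\infty$ for $R$ smaller than a universal threshold. The same multilinear estimate yields a long-time stability/perturbation lemma: if $\tilde u$ is an approximate solution on $I$ with $\|\tilde u\|_{Z(I)}\le M$, and both $\|(i\partial_t+\Delta)\tilde u-|\tilde u|^4\tilde u\|_{N^1(I)}$ and $\|u(t_0)-\tilde u(t_0)\|_{H^1}$ are small enough (depending on $M$), then the exact solution $u$ extends over $I$ with $\|u\|_{X^1(I)}$ controlled in terms of $M$. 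Together with the quasi-subadditivity $\Sigma(R,\tau_1+\tau_2)\lesssim \Sigma(R,\tau_1)+\Sigma(R,\tau_2)$, this reduces the theorem to establishing $\Sigma_*(R)<\infty$ for every $R>0$.

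Next I would run an induction on energy. Set $E_c:=\sup\{R:\Sigma_*(R)<\infty\}$; by the small-data step $E_c>0$. Arguing by contradiction, assume $E_c<\infty$ and take a sequence of solutions with energy converging to $E_c$ and $Z$-norm diverging. A profile decomposition for bounded $H^1$ data on $\T^3$---whose profiles come in two types: Euclidean profiles obtained by rescaling the free evolution on $\R^3$ and transplanting it to concentrate at a point of $\T^3$, and bounded-frequency profiles representing genuine torus dynamics---combined with the perturbation lemma produces a \emph{critical element}: a solution $u_c$ of energy exactly $E_c$ whose orbit is precompact in $H^1(\T^3)$ modulo spatial translations, and whose $Z$-norm on its maximal interval of existence is infinite.

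The rigidity step then shows no such $u_c$ can exist. The two regimes must be handled separately: a critical element concentrating at a scale going to zero is excluded by invoking the global well-posedness and scattering theory for the defocusing quintic NLS on $\R^3$ (which bounds the $Z$-norm of all Euclidean profiles), while a bounded-frequency critical element is ruled out by a Morawetz-type inequality on $\T^3$ combined with conservation of mass and energy, which force $u_c\equiv 0$. Once $\Sigma_*(R)<\infty$ for all $R$, the quasi-subadditivity iterates to the claimed exponential bound $\Sigma(R,\tau)\leq \Sigma(R,1)e^{C_0(1+\tau)}$, and the final $X^1$-estimate follows by subdividing $[0,\tau]$ into intervals on which the $Z$-norm is small and iterating Lemma~\ref{multilinear}. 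I expect the rigidity step to be the main obstacle: the anisotropic dispersion on $\T^3$ obstructs a direct transplantation of the Kenig--Merle rigidity from $\R^3$, and the hybrid profile decomposition together with the Euclidean approximation is essentially the technical heart of the argument.
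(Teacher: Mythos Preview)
The paper does not give a proof of this statement: it is quoted as a black-box result from \cite{IoPau}, and only the consequences (the exponential bound via quasi-subadditivity, and the $X^1$ control by subdividing into intervals of small $Z$-norm) are used downstream. So there is no ``paper's own proof'' to compare against.

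That said, your outline is a faithful high-level summary of the Ionescu--Pausader argument: small-data theory and stability from the critical multilinear estimate, induction on energy, a profile decomposition adapted to $\T^3$ with Euclidean (concentrating) and scale-one profiles, and a rigidity step. One point to flag: your description of the rigidity for scale-one profiles via a ``Morawetz-type inequality on $\T^3$'' is not quite how \cite{IoPau} proceeds---there is no useful global Morawetz estimate on the torus. Instead, the argument shows that a minimal-energy blow-up solution must be well approximated by a single Euclidean profile (i.e., it must concentrate), and then the global $\R^3$ theory of \cite{CKSTT} rules it out; scale-one profiles are handled through the stability theory and the induction hypothesis rather than a separate rigidity mechanism. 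This is exactly the ``hybrid profile decomposition together with the Euclidean approximation'' you identify as the technical heart, so your instincts are right, but the dichotomy in the rigidity step is not as symmetric as you present it.
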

We denote $\Phi(t)$ the global flow of \eqref{NLS} in $H^1(\T^3)$. The following corollary shows that 
the $H^{\sigma}(\T^3)$, $\sigma\geq 1$ regularity is propagated by  $\Phi(t)$. 
%on $H^{\sigma}(\T^3)$
%we can extend $\Phi(t)$ on $H^{\sigma}(\T^3)$ for $\sigma\geq 1$:
\begin{cor}\label{Hsigmabound} 
Let $\sigma\geq 1$. Then $\Phi(t)(H^{\sigma}(\T^3))= H^{\sigma}(\T^3) $ for every $t\in\R$.
% and $T\geq 1$. Assume that $\phi\in H^{\sigma}(\T^3)$ such that $H(\phi)\leq R$, then the flow $\Phi(t)$ of \eqref{NLS} can be extended on $H^{\sigma}(\T^3)$ globally.
\end{cor}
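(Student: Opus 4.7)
The plan is to upgrade the Ionescu--Pausader global $H^1$-theory of Theorem~\ref{GWP} to propagation of $H^\sigma$-regularity for arbitrary $\sigma\geq 1$. I would fix $u_0\in H^\sigma(\T^3)$, set $R:=\|u_0\|_{H^1}$, and let $u(t):=\Phi(t)u_0$ denote the global $H^1$-solution supplied by Theorem~\ref{GWP}. For any $\tau>0$, that theorem furnishes the a priori bounds
\[
M_Z(\tau):=\|u\|_{Z([0,\tau])}\leq \Sigma(R,1)\,\e^{C_0(1+\tau)},\qquad M_1(\tau):=\|u\|_{X^1([0,\tau])}\leq C\bigl(R,M_Z(\tau)\bigr),
\]
and these are the only ingredients I would import from the $H^1$-theory. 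The target estimate is
\[
\sup_{t\in[0,\tau]}\|u(t)\|_{H^\sigma}\leq C(R,\tau,\sigma)\,\|u_0\|_{H^\sigma},
\]
which gives $\Phi(t)(H^\sigma)\subset H^\sigma$ on $[0,\tau]$.

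The first step is to partition $[0,\tau]$ into finitely many consecutive subintervals $I_j=[a_j,a_{j+1}]$, $j=1,\ldots,N$, with $|I_j|\leq \tfrac12$ and $\|u\|_{Z(I_j)}\leq \eta$, where $\eta>0$ is a smallness threshold to be fixed. Such a partition exists because for intervals of length $\leq\tfrac12$ the $Z$-norm coincides with a sum of two Besov-valued $L^p_t$ norms, whose $p$-th powers are additive over subintervals; the number of subintervals $N=N(M_Z(\tau),\eta)$ is therefore finite.

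On each $I_j$ I would invoke Duhamel's formula, the linear estimate \eqref{linearestimate}, and the duality characterisation of the $N^\sigma$-norm to write
\[
\|u\|_{X^\sigma(I_j)}\leq \|u(a_j)\|_{H^\sigma}+\bigl\||u|^4u\bigr\|_{N^\sigma(I_j)}.
\]
Applying Lemma~\ref{multilinear} with all five inputs equal to $u$ produces
\[
\bigl\||u|^4u\bigr\|_{N^\sigma(I_j)}\leq C\,\|u\|_{X^\sigma(I_j)}\,\|u\|_{Z(I_j)}^{2}\,\|u\|_{X^1(I_j)}^{2}\leq C\,\eta^{2}M_1(\tau)^{2}\,\|u\|_{X^\sigma(I_j)}.
\]
Fixing $\eta=\eta(R,\tau)$ so that $C\eta^{2}M_1(\tau)^{2}\leq \tfrac12$ allows me to absorb the $X^\sigma$ term on the right, yielding $\|u\|_{X^\sigma(I_j)}\leq 2\|u(a_j)\|_{H^\sigma}$. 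Iterating across the $N$ subintervals together with the embedding $X^\sigma(I_j)\hookrightarrow L^\infty_t H^\sigma$ gives $\|u(t)\|_{H^\sigma}\leq 2^N\|u_0\|_{H^\sigma}$ for every $t\in[0,\tau]$. Running the same argument on $[-\tau,0]$ via the time-reversal symmetry $u(t,x)\mapsto \overline{u(-t,x)}$ of NLS yields $\Phi(t)(H^\sigma)\subset H^\sigma$ for every $t\in\R$. The reverse inclusion is then immediate: given $v\in H^\sigma$, the element $u_0:=\Phi(-t)v$ lies in $H^\sigma$ by what has just been proven, and $\Phi(t)u_0=v$.

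The main delicate point will be the smallness-partition step: $\eta$ must be chosen so that $C\eta^{2}M_1(\tau)^{2}\leq \tfrac12$, and because $M_1(\tau)$ inherits the exponential-in-$\tau$ growth of $M_Z(\tau)$, the number of subintervals $N$, and hence the factor $2^N$, grows in an essentially uncontrolled way with $\tau$. This dependence is harmless for the present qualitative claim $\Phi(t)(H^\sigma)=H^\sigma$, since only finiteness of $\|\Phi(t)u_0\|_{H^\sigma}$ is needed; a more uniform-in-data bound of the type required by Proposition~\ref{uniformboundHsigma} is the next issue to address.
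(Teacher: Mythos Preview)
Your proposal is correct and follows essentially the same approach as the paper: partition the time interval into subintervals on which the $Z$-norm of the solution is small, use Duhamel together with Lemma~\ref{multilinear} to absorb the nonlinear contribution in $X^\sigma$, iterate, and then invoke time reversibility for the reverse inclusion. Your closing remark about the uncontrolled dependence of the number of subintervals on the individual solution exactly anticipates the paper's Remark~\ref{RemarkHsigma}.
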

%%%
\begin{proof}
Assume that $\phi\in H^{\sigma}(\T^3)$ such that $H(\phi)\leq R$. Denote $u(t)=\Phi(t)\phi$.  From Theorem~\ref{GWP}, 
%for any $I\subset[-T,T]$, 
$$
\|u\|_{Z([-T,T])}\leq \Sigma(R,T),\quad\|u\|_{X^1([-T,T])}\leq C(R,T),
$$ 
where $C(R,T)$ depends only on $R$ and $T$.   By the Duhamel formula and \eqref{Multilinears} of Lemma \ref{multilinear}, for any $I=(a,b)\subset [-T,T]$,
\begin{align*}
\|u\|_{X^{\sigma}(I)}\leq &\|e^{i(t-a)\Delta}u(a)\|_{X^{\sigma}(I)}+\||u|^4u\|_{N^{\sigma}(I)}\\ \leq &\|u(a)\|_{H_x^{\sigma}}+C\|u\|_{X^{\sigma}(I)}\|u\|_{X^1(I)}^2\|u\|_{Z(I)}^2\\
\leq & \|u(a)\|_{H_x^{\sigma}}+C(R,T)\|u\|_{Z(I)}^2\|u\|_{X^{\sigma}(I)},
\end{align*}
where $C(R,T)$ is a constant depending only on $R$ and $T$ that can change from line to line.

Next, we partition $[0,T]=\bigcup_{j=1}^{\kappa}[a_{j-1},a_j]$ such that
$ \|u\|_{Z([a_{j-1},a_j])}<\frac{1}{\sqrt{2C(R,T)}}, 
$
hence for all $j=1,2,\cdots,k$,
$$ \|u\|_{X^{\sigma}([a_{j-1},a_j] )}\leq 2\|u(a_{j-1})\|_{H_x^{\sigma}}.
$$
By the embedding property, for all $j\geq 1$,
$$ \|u\|_{X^{\sigma}([a_{j},a_{j+1}] )}\leq C\|u\|_{X^{\sigma}([a_j,a_{j-1}]) }.
$$
This shows that for all $t\in[-T,T]$,
$$ \|u(t)\|_{H^{\sigma}}\leq C^{\kappa}\|\phi\|_{H_x^{\sigma}}.
$$
Therefore  $\Phi(t)(H^{\sigma}(\T^3))\subset H^{\sigma}(\T^3) $. Using the time reversibility completes the proof.
\end{proof}
%%%%%%%%
\begin{rem}\label{RemarkHsigma}
When $\sigma>1$, the above argument does not give a uniform control of the $H^{\sigma}$ norm for the solution, since the index $\kappa$ depends on the profile of each individual global $H^1$-solution $u(t)$. Later we shall strengthen the $H^{\sigma}$-estimate uniformly on any bounded ball of $H^{\sigma}$ by  choosing a uniform partition of $[0,T]$.
\end{rem}

\subsection{Local convergence and stability}

From now on, denote $\Phi(t)$ the flow of the energy critical NLS. Denote $\Phi_N(t)$ the flow of the truncated NLS:
\begin{align}\label{5NLS-N}
	i\partial_tu_N+\Delta u_N=S_N(|S_Nu_N|^4S_Nu_N),
\end{align}
where $S_N$ is the smooth Fourier truncation at size $N$ defined at the beginning of Section \ref{Sec:modifiedenergy}.

\begin{prop}[Local convergence]\label{LWP}	
Assume that $\sigma\geq 1$.	Let $\phi,\widetilde{\phi}\in H^{\sigma}(\mathbb{T}^3)$ and $I\subset \mathbb{R}$ be an interval and $t_0\in I$. Suppose $\|\phi\|_{H_x^{\sigma}}\leq A,\|\widetilde{\phi}\|_{H_x^{\sigma}}\leq  A$. Then for any $\epsilon>0$, there exist $\delta=\delta(A,\epsilon)>0$ such that if
	$$ \|e^{i(t-t_0)\Delta}\phi\|_{Z(I)}<\delta,\; \|\phi-\widetilde{\phi}\|_{H_x^{\sigma}}<\delta,
	$$
	there exist unique solutions $u=\Phi(t-t_0)\phi $ and $u_N=\Phi_N(t-t_0)\widetilde{\phi}$ in $C(\ov{I}; H_x^{\sigma})\cap X^{\sigma}(I)$ satisfying
	$$ \|u_N\|_{X^{\sigma}(I)}+\|u\|_{X^{\sigma}(I)}\leq C_0A,\; \|u_N\|_{Z(I)}+\|u\|_{Z(I)}<\epsilon,
	$$
	where $C_0>0$ is an absolute constant.
	Moreover, 
\begin{align}\label{localboundgeneral}
 \|\Phi_N(t)\widetilde{\phi}-\Phi(t)\phi\|_{C(\ov{I};H_x^{\sigma} ) }\leq  C_0\|\phi-\widetilde{\phi}\|_{H_x^{\sigma}}+C_0\delta_{N}(A,\epsilon,\phi),
\end{align}
where $\delta_N(A,\epsilon,\phi)\rightarrow 0$ as $N\rightarrow\infty$, uniformly in $\phi$ on a compact set $K$ of $H^{\sigma}(\T^3)$ such that $\|\phi\|_{H_x^{\sigma}}\leq A$.
\end{prop}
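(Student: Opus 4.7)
The plan is to run a standard Picard iteration for both $u$ and $u_N$ in $X^\sigma(I)$ (partitioning $I$ into subintervals of length $\leq 1$ if needed) and then control $w:=u_N-u$ by a stability estimate in which the $N$-dependent error is isolated. For $u$, the linear estimate \eqref{linearestimate}, the multilinear bound \eqref{Multilinears}, and the embedding \eqref{Strichartzcor:} give a contraction on a ball of radius $C_0A$ in $X^\sigma(I)$ as soon as $\|\e^{i(t-t_0)\Delta}\phi\|_{Z(I)}<\delta$ is small compared to $\epsilon$ and $A$, producing $u=\Phi(t-t_0)\phi$ with the claimed bounds. Since $S_N$ is a Fourier multiplier with symbol bounded by $1$, it acts uniformly in $N$ on $X^\sigma(I)$, $Z(I)$ and $Y^{-\sigma}(I)$, and hence on $N^\sigma(I)$ by duality. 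Together with the Strichartz bound $\|\e^{i(t-t_0)\Delta}\widetilde\phi\|_{Z(I)}\leq \delta+C\|\phi-\widetilde\phi\|_{H^1}<2\delta$, the same Picard argument applies to the truncated equation \eqref{5NLS-N} and produces $u_N$ with analogous bounds.

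To obtain \eqref{localboundgeneral} I would set $w:=u_N-u$ and decompose the discrepancy of nonlinearities as $T_1+T_2$ with
$$
T_1:=S_N\bigl[|S_Nu_N|^4S_Nu_N-|S_Nu|^4S_Nu\bigr],\qquad T_2:=S_N\bigl[|S_Nu|^4S_Nu\bigr]-|u|^4u.
$$
The term $T_1$ is multilinear in $S_Nw$ with four factors of uniformly bounded $X^1\cap Z$-norm, so \eqref{Multilinears} bounds $\|T_1\|_{N^\sigma(I)}$ by $C\epsilon^2A^2\|w\|_{X^\sigma(I)}$, which is absorbed via Duhamel for $\epsilon$ small. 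The term $T_2$ depends only on $u$, and I would split it further as $S_N[|S_Nu|^4S_Nu-|u|^4u]+(\mathrm{Id}-S_N)[|u|^4u]$. The first summand is again multilinear, carrying exactly one factor of $(\mathrm{Id}-S_N)u$, so \eqref{Multilinears} bounds it by $CA^4\|(\mathrm{Id}-S_N)u\|_{X^\sigma(I)}$. For the second, commuting $S_N$ with the propagator and invoking the Duhamel identity $\int_{t_0}^t\e^{i(t-s)\Delta}|u|^4u\,ds=i\bigl(\e^{i(t-t_0)\Delta}\phi-u\bigr)$ reduces it to a multiple of $\|(\mathrm{Id}-S_N)(u-\e^{i(t-t_0)\Delta}\phi)\|_{X^\sigma(I)}$. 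Since the $X^\sigma(I)$ norm is a weighted $\ell^2$ sum over frequencies and $(\mathrm{Id}-S_N)$ truncates to $|k|\gtrsim N$, both right-hand sides vanish as $N\to\infty$ for each fixed $\phi$. Setting $\delta_N(A,\epsilon,\phi)$ equal to their sum and closing a Gronwall-type inequality in $\|w\|_{X^\sigma(I)}$ yields \eqref{localboundgeneral}.

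Uniformity on a compact set $K\subset H^\sigma(\T^3)$ is then a soft consequence: the solution map $\phi\mapsto u$ from $H^\sigma$ to $X^\sigma(I)$ is continuous---which itself follows from the $N=\infty$ case of the previous step applied to two exact flows---so both images $\{u:\phi\in K\}$ and $\{u-\e^{i(t-t_0)\Delta}\phi:\phi\in K\}$ are compact in $X^\sigma(I)$. Because $(S_N)$ is a uniformly bounded sequence of operators on $X^\sigma(I)$ converging strongly to the identity, the convergence is uniform on any compact subset, so $\delta_N(A,\epsilon,\phi)\to 0$ uniformly in $\phi\in K$. The main obstacle, as I see it, is identifying the correct splitting of $T_2$ and reducing $N^\sigma$-estimates of $(\mathrm{Id}-S_N)$ applied to quintic terms to $X^\sigma$-estimates of $(\mathrm{Id}-S_N)$ applied to the solution itself, which is what lets the compactness argument run in the natural space $X^\sigma(I)$ rather than in the more delicate $N^\sigma(I)$.
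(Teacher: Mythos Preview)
Your proposal is correct and takes a genuinely different route from the paper. For Step~1 (uniform bounds) both arguments are the same Picard/continuity scheme. The divergence is in Step~2. The paper exploits a \emph{frequency support} observation: since $|S_Mu_N|^4S_Mu_N$ has Fourier support in $|k|\leq 5M$, one has $S_N^{\perp}\bigl(|S_Mu_N|^4S_Mu_N\bigr)=0$ for $M=N/16$, which yields the recursive inequality $\|S_N^{\perp}u\|_{X^1(I)}\leq \|S_N^{\perp}\phi\|_{H^1}+CA^2\epsilon^2\|S_{N/16}^{\perp}u\|_{X^1(I)}$. Iterating in $N$ gives an \emph{explicit} $\delta_N$ depending only on the high-frequency tails $\|S_{N/16^j}^{\perp}\phi\|_{H^1}$ of the \emph{initial data}, so uniformity on compacta is immediate. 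Your splitting $T_1+T_2$, together with the Duhamel identity $\int_{t_0}^t e^{i(t-t')\Delta}|u|^4u\,dt'=i\bigl(u-e^{i(t-t_0)\Delta}\phi\bigr)$ to handle $(\mathrm{Id}-S_N)[|u|^4u]$, is a clean alternative: it produces a $\delta_N$ depending on the \emph{solution} $u$ through $\|(\mathrm{Id}-S_N)u\|_{X^\sigma(I)}$, and you then recover uniformity by a soft compactness argument (Lipschitz continuity of $\phi\mapsto u$ from the $N=\infty$ case, hence compact image in $X^\sigma(I)$, hence uniform convergence of $S_N\to\mathrm{Id}$). What the paper's approach buys is an explicit, data-level formula for $\delta_N$ that is directly portable to the iteration in Proposition~\ref{long-timeapproximation}; what your approach buys is avoiding the recursive bootstrap entirely, at the price of invoking continuity of the solution map as an auxiliary input. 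One cosmetic remark: your bound $\|T_1\|_{N^\sigma}\leq C\epsilon^2A^2\|w\|_{X^\sigma}$ should, strictly from \eqref{Multilinears}, carry an additional $\epsilon^{3/2}A^{5/2}\|w\|_{X^\sigma}$ term (when $S_Nw$ sits in one of the $Z^{1/2}X^1$ slots), but this is equally absorbable and does not affect the argument.
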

\begin{rem}\label{rk:stability}
Consequently, taking $\widetilde{\phi}=\phi$, under the hypothesis of Proposition~\ref{LWP}, we have
$$ \|\Phi_N(t)\phi-\Phi(t)\phi\|_{X^{\sigma}(I)}\rightarrow 0,
$$
uniformly on any compact set of $H^1(\T^3)$. Taking the limit $N\rightarrow \infty$ in \eqref{localboundgeneral}, we obtain also that under the hypothesis of 
Proposition~\ref{LWP},
$$ \|\Phi(t)\widetilde{\phi}-\Phi(t)\phi\|_{X^{\sigma}(I)}\leq C_0\|\phi-\widetilde{\phi}\|_{H_x^{\sigma}}.
$$
\end{rem}

\begin{proof}
First we prove the case when $\sigma=1$. Note that the existence of solutions on $\ov{I}$ is a direct consequence of Proposition~3.3 of \cite{IoPau}. So we will only concentrate on the bounds for solutions $u_N(t)$ and $u(t)$. We argue by several steps.

\begin{flushleft}	
\underline{Step 1: Uniform bound}:
\end{flushleft}
	Without loss of generality, we assume that $t_0=0$.
	By the Strichartz estimate \eqref{Strichartzcor:} and Lemma~\ref{multilinear}, we have
	\begin{align}\label{appendix:Zbound}
		\|u\|_{Z(I)}\leq   &\|\e^{it\Delta}\phi\|_{Z(I)}
		+C\||u|^4u \|_{N^1(I)} \notag 
		\\
		\leq & \|\e^{it\Delta}\phi\|_{Z(I)}
		+C\|u\|_{Z(I)}^2\|u\|_{X^1(I)}^3,
	\end{align}
and
	\begin{align}\label{appendix:X1bound}
	\|u\|_{X^1(I)}\leq   &\|\e^{it\Delta}\phi\|_{Z(I)}
	+C\||u|^4u \|_{N^1(I)} \notag 
	\\
	\leq & \|\phi\|_{H_x^1}
	+C\|u\|_{Z(I)}^2\|u\|_{X^1(I)}^3,
\end{align}
where $C>0$ is independent of $I$. 
	Note that the same inequalities holds for $u_N$, as the smooth spectral projector
	$S_N$ is bounded from $L_x^r$ to $L_x^r$ for any $1<r<\infty$.
	Then the desired control for $\|u\|_{Z(I)}$ and $\|u\|_{X^1(I)}$, as well as $\|u_N\|_{Z(I)}, \|u_N\|_{X^1(I)}$ follow from the following elementary lemma:
	\begin{lem}\label{elementary} 
		Let $A>0, C_0>0$ and $f\in C([0,\tau_0];[0,\infty))$ be an increasing continuous function such that $f(0)=0$ and $h:[0,\tau_0]\rightarrow [0,\infty)$ is increasing. Then for any small $\epsilon>0$, there exists $\delta=\delta(A,\epsilon)>0$, such that if
		$$ f(t)\leq \delta+C_0f(t)^4h(t),\quad  h(t)\leq C_0A+C_0f(t)^4h(t),\quad \forall t\in[0,\tau_0],
		$$
		then $f(t)\leq \epsilon$ and $h(t)\leq 2C_0A$.
	\end{lem}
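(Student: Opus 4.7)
The plan is a standard bootstrap/continuity argument exploiting only the continuity of $f$. Given the hypotheses, I would first fix a threshold $\epsilon_0=\epsilon_0(A,C_0)>0$ chosen so that $C_0\epsilon_0^{\,4}\leq \tfrac12$ and $2C_0^{\,2} A\,\epsilon_0^{\,3}\leq \tfrac12$. For any $\epsilon\in(0,\epsilon_0]$ I then set $\delta(A,\epsilon):=\epsilon/4$ and claim the asserted conclusion holds.

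I would introduce $t^{*}:=\sup\{t\in[0,\tau_0]:\,f(t)\leq\epsilon\}$. Since $f$ is continuous with $f(0)=0<\epsilon$, one has $t^{*}>0$ and $f(t)\leq\epsilon$ on $[0,t^{*}]$. The key point is to prove $t^{*}=\tau_0$; once this is done the $h$-bound comes for free from the pointwise second hypothesis.

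On $[0,t^{*}]$ the second hypothesis rewrites as $h(t)\,(1-C_0 f(t)^{4})\leq C_0 A$. Because $C_0 f(t)^{4}\leq C_0\epsilon^{4}\leq\tfrac12$, this yields $h(t)\leq 2C_0 A$ on $[0,t^{*}]$. Feeding this bound back into the first hypothesis gives $f(t)\leq \delta+2C_0^{\,2} A\,\epsilon^{4}\leq \tfrac{\epsilon}{4}+\tfrac{\epsilon}{2}=\tfrac{3\epsilon}{4}$ on $[0,t^{*}]$. If it were the case that $t^{*}<\tau_0$, continuity of $f$ together with the maximality of $t^{*}$ would force $f(t^{*})=\epsilon$, contradicting $f(t^{*})\leq 3\epsilon/4$. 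Hence $t^{*}=\tau_0$, so $f\leq\epsilon$ on all of $[0,\tau_0]$, and the bound $h\leq 2C_0 A$ extends to the whole interval by the same manipulation of the second hypothesis.

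The only subtle point to watch is that $h$ is not assumed continuous, so the bootstrap must be driven by the continuous function $f$ alone; the pointwise structure of the second inequality then propagates the $h$-bound automatically once $f$ is controlled. I do not anticipate any serious obstacle beyond keeping track of the threshold $\epsilon_0(A,C_0)$ and the proportionality constant in $\delta(A,\epsilon)$.
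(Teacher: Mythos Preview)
Your proof is correct and follows essentially the same continuity/bootstrap argument as the paper: both define the maximal time where $f\leq\epsilon$, use the second inequality to get $h\leq 2C_0A$ there, feed this back into the first inequality to obtain a strictly improved bound on $f$, and derive a contradiction with the maximality. Your bookkeeping is in fact slightly cleaner, since you work directly on $[0,t^*]$ and obtain $f(t^*)\leq 3\epsilon/4<\epsilon$, whereas the paper extends by continuity to a slightly larger interval where $f<2\epsilon$ before closing the loop; the underlying idea is identical.
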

	
	\begin{proof}
		This follows from a standard continuity argument. Let $\tau\leq \tau_0$ be the largest time such that $f(t)\leq \epsilon$ ($\tau$ exists since $f(0)=0$). We claim that if $\epsilon\ll 1$ is such that $16C_0\epsilon^4<\frac{1}{2}$ and $32C_0^2A\epsilon^3<\frac{1}{2}$, then $\tau=\tau_0$. By contradiction, if $\tau<\tau_0$, by continuity of $f$, there exists $\tau_1\in(\tau,\tau_0)$ such that $f(\tau_1)<2\epsilon$. Then for all $0\leq t\leq \tau_1$,
		$ h(t)\leq C_0A+16C_0\epsilon^4 h(t)<2C_0A
		$, thanks to the smallness of $\epsilon$. Plugging into the inequality of $f(t)$, we deduce that for all $0\leq t\leq \tau_1$
		$$ f(t)\leq \delta+32C_0^2A\epsilon^4.
		$$
		Choosing $\delta<\frac{\epsilon}{2}$, hence, $\delta+32C_0^2A\epsilon^4<\epsilon$, we obtain a contradiction.
	\end{proof}

\begin{flushleft}	
	\underline{Step 2:  Quantitative convergence}:
\end{flushleft}
	
 By the same application of the Strichartz inequality, we get
	\begin{align*}
		\|u_N-u\|_{X^1(I)}\leq \|\phi_N-\phi\|_{H_x^1}+\|S_N^{\perp}(|S_Nu_N|^4S_Nu_N )\|_{N^1(I) }+\||S_Nu_N|^4S_Nu_N-|u|^4u\|_{N^1(I)}.
	\end{align*} 
	By splitting 
	$ S_Nu_N=S_Mu_N+S_M^{\perp}S_Nu_N,
	$ we observe that for $M=\frac{N}{16}$, $S_N^{\perp}(|S_Mu_N|^4S_Mu_N )=0$. Therefore, by the uniform boundedness of $S_N,S_M$ on $L_x^r$ ($1<r<\infty$) and on $X^1$,
	\begin{align}\label{ine:SNperp}
		\|S_N^{\perp}(
		|S_Nu_N|^4S_Nu_N
		)\|_{N^1(I)}\leq & C\|S_M^{\perp}u_N\|_{Z(I)}\|u_N\|_{Z(I)}^3\|u_N\|_{X^1(I)}^3\notag   \\
		+& C\|u_N\|_{Z(I)}^2\|S_M^{\perp}u_N\|_{X^1(I)}\|u_N\|_{X^1(I)}^2\notag \\
		\leq &C\|S_M^{\perp}u_N\|_{X^1(I)}\|u_N\|_{Z(I)}^2\|u_N\|_{X^1(I)}^2\\
		\leq & C(\|S_M^{\perp}u\|_{X^1(I)}+\|u_N-u\|_{X^1(I)})\|u_N\|_{Z(I)}^2\|u_N\|_{X^1(I)}^2\notag \\
	\leq  &CA^2\epsilon^2(\|S_M^{\perp}u\|_{X^1(I)}+\|u_N-u\|_{X^1(I)}). \notag
	\end{align}
	For the other term, algebraic manipulation yields
	\begin{align*}
		\||S_Nu_N|^4S_Nu_N-|u|^4u \|_{N^1(I)}\leq & C\|S_Nu_N-u\|_{X^1(I)}(\|S_Nu_N\|_{Z(I)}^{2}+\|u\|_{Z(I)}^2)(\|S_Nu_N\|_{X^1(I)}^2+\|u\|_{X^1(I)}^2)\\
	\leq  &CA^2\epsilon^2(\|S_N^{\perp}u\|_{X^1(I)}+\|u_N-u\|_{X^1(I)})\\
	\leq  &CA^2\epsilon^2(\|S_M^{\perp}u\|_{X^1(I)}+\|u_N-u\|_{X^1(I)}  ).
	\end{align*}
	In summary, we have
	\begin{align}\label{iteration1}
		\|u_N-u\|_{X^1(I)}\leq & \|\phi-\widetilde{\phi} \|_{H_x^1}+CA^2\epsilon^2(\|S_{N/16}^{\perp}u\|_{X^1(I)}+\|u_N-u\|_{X^1(I)}).
	\end{align}
	Furthermore, from the Duhamel formula of $u$ and the similar argument as \eqref{ine:SNperp}, we have the recursive inequality
	\begin{align}\label{iteration2} 
		\|S_N^{\perp}u\|_{X^1(I)}\leq & \|S_N^{\perp}\phi\|_{H_x^1}+CA^2\epsilon^2\|S_{N/16}^{\perp}u\|_{X^1(I)}.
	\end{align}
To conclude, we invoke the following elementary result:
	\begin{lem}
		Let $\{a_j\},\{b_j\}$ be two positive sequences and $A_0>0$ is an absolute constant. Assume that  $0<\theta<1$ and for $1\leq j\leq m$,
		$$ a_{j}\leq A_0b_{j}+\theta a_{j-1}.
		$$
		Then we have
		\begin{align*}
			a_m\leq A_0\sum_{j=0}^{m-1}\theta^{j}b_{m-j}+ \theta^{m-1}a_1.
		\end{align*}
		In particular, if $b_m\rightarrow 0$, then $a_m\rightarrow 0$.
	\end{lem}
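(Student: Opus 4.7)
The plan is to establish the main inequality by unrolling the recursion and then to deduce the convergence claim by a standard Ces\`aro-type splitting. This is a discrete Gronwall lemma and both steps are elementary.

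First, for the main bound I would iterate the hypothesis $a_j \leq A_0 b_j + \theta a_{j-1}$ starting from $j=m$: one substitution gives $a_m \leq A_0 b_m + \theta A_0 b_{m-1} + \theta^2 a_{m-2}$, and a straightforward induction on $k$ (for $1 \leq k \leq m-1$) produces
\[
a_m \leq A_0 \sum_{j=0}^{k-1} \theta^j b_{m-j} + \theta^k a_{m-k}.
\]
Setting $k = m-1$ yields the advertised inequality (in fact a slightly sharper one with the sum only up to $j=m-2$; positivity of $b$ then gives the form stated in the lemma). This should take two or three lines.

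For the convergence assertion I would argue as follows. Fix $\varepsilon > 0$, choose $J_0$ with $b_j < \varepsilon$ for $j \geq J_0$, and split
\[
\sum_{j=0}^{m-1} \theta^j b_{m-j} \;=\; \sum_{j \leq m - J_0} \theta^j b_{m-j} \;+\; \sum_{j > m - J_0} \theta^j b_{m-j}.
\]
In the first sum every index $m-j$ satisfies $m-j \geq J_0$, so $b_{m-j} < \varepsilon$ and the piece is bounded by $\varepsilon/(1-\theta)$. In the second sum the range has length at most $J_0-1$, the $b_{m-j}$ are bounded by $B := \max_{1 \leq k < J_0} b_k$, and each $\theta^j$ is at most $\theta^{m-J_0+1}$, so this piece is $O(B J_0 \theta^{m-J_0}) \to 0$. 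Combined with $\theta^{m-1} a_1 \to 0$, one obtains $\limsup_{m \to \infty} a_m \leq A_0 \varepsilon/(1-\theta)$; letting $\varepsilon \to 0$ gives $a_m \to 0$.

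Main obstacle: essentially none, since the lemma is purely algebraic. The only care-point is the balancing in the geometric convolution above, where the two competing decays (small $b$ for small $j$, small $\theta^j$ for large $j$) have to be separated by a cutoff chosen in the right order. Beyond that, the argument is a routine iteration and the lemma's role in the paper is to close the recursive bound obtained from \eqref{iteration1}--\eqref{iteration2}.
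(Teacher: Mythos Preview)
Your proposal is correct and is precisely the straightforward iteration the paper has in mind; in fact the paper omits the proof entirely, writing only that it ``is straightforward hence we omit the detail.'' Your unrolling of the recursion and the Ces\`aro-type splitting for the convergence claim are both standard and complete.
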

	The proof of this elementary lemma is straightforward hence we omit the detail.
	
	Consequently, we have
	\begin{align*}
		\|S_{N/16}^{\perp}u\|_{X^1(I)}\leq A_0\sum_{j=1}^{\log_{16}N-1}(CA\epsilon)^j\|S_{N/16^j}^{\perp}\phi\|_{H_x^1}+(CA\epsilon)^{\log_{16}N-1}\|\phi\|_{H_x^1},
	\end{align*}
	where $C,A>0$ are absolute constants and $\epsilon\ll 1$ such that $CA\epsilon<1$. We denote
	\begin{align}\label{deltaN} \delta_{N}(A,\epsilon,\phi):=\sum_{j=1}^{\log_{16}N-1}(CA\epsilon)^j\|S_{N/16^j}^{\perp}\phi\|_{H_x^1}+(CA\epsilon)^{\log_{16}N-1}\|\phi\|_{H_x^1}.
	\end{align}
Since $\|S_N^{\perp}\phi\|_{H_x^1}\rightarrow 0$, uniformly on any compact set of $H^1$, we deduce that	$\delta_N(A,\epsilon,\phi)$ converges to $0$, uniformly on a compact set of $H^1(\T^3)$. 
	Plugging into \eqref{iteration1}, we obtain that
	\begin{align}\label{quantitativebound}
		\|u_N-u\|_{X^1(I)}\leq A_0\|\phi-\widetilde{\phi}\|_{H_x^1}+A_0\delta_{N}(A,\epsilon,\phi).
	\end{align}
By the embedding $X^1(I)\hookrightarrow L^{\infty}(I;H_x^1)$ and the fact that $u_N(t),u(t)\in C(\ov{I};H_x^1)$, we obtain that
$$ \sup_{t\in\ov{I}}\|u_N(t)-u(t)\|_{H_x^1}\leq C_0\|\phi-\widetilde{\phi}\|_{H_x^1}+C_0\delta_{N}(A,\epsilon,\phi),
$$
for some absolute constant $C_0>0$.
	This completes the proof of Lemma \ref{LWP} when $\sigma=1$.
	
The general case $\sigma>1$ follows from similar analysis. Here we only indicate the necessary modification in the proof. 
Indeed, in Step 1, we replace $X^1,H^1$ norms by $X^{\sigma}, H^{\sigma}$ norms in \eqref{appendix:X1bound}, thanks to  \eqref{Multilinears}.
In Step 2, all inequalities remain unchanged when replacing all $X^1, H^1, N^1$ norms by $X^{\sigma}, H^{\sigma}, N^{\sigma}$ norms (up to change the numerical constants $C$ in front of each inequality), thanks to \eqref{Multilinears} and the trivial embedding $X^{\sigma}\hookrightarrow X^1$. This completes the proof Lemma \ref{LWP}.
\end{proof}
%%%%%%%
\begin{definition}[$(A,\delta)$-partition]\label{partition}
	Let $A>\delta>0$. Given an interval $[-T,T]$ and $\phi\in H^1(\T^3)$, we define an $(A,\delta)$-partition with respect to $\phi$ a collection of finite intervals $([\tau_{j-1},\tau_j])_{j=1}^m$ such that 
	$$ -T=\tau_0<\tau_2<\cdots<\tau_m=T,\quad \|\Phi(\tau_{j-1})\phi\|_{H_x^1}\leq A,\; \|\Phi(t)\phi\|_{Z([\tau_{j-1},\tau_j])}<\delta,\;\forall j=1,\cdots,m. 
	$$
\end{definition}
We collect some basic properties. The following property is immediate:
\begin{prop}[Refinement of $(A,\delta)$-partition]\label{refinement} 
	Any refinement of an $(A,\delta)$-partition with respect to $\phi$ is an $(A,\delta)$-partition.
\end{prop}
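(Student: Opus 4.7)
The plan is to verify the two defining conditions of an $(A,\delta)$-partition (Definition~\ref{partition}) for an arbitrary refinement $(\tilde\tau_k)_{k=0}^{m'}$ of a given $(A,\delta)$-partition $(\tau_j)_{j=0}^m$ with respect to $\phi$. By definition of refinement, every new subinterval $[\tilde\tau_{k-1},\tilde\tau_k]$ is contained in some original subinterval $[\tau_{j-1},\tau_j]$, so the argument splits into two checks.

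First, the $Z$-norm condition is preserved by inclusion. Since $[\tilde\tau_{k-1},\tilde\tau_k]\subset [\tau_{j-1},\tau_j]$, and the $Z$-norm is monotone with respect to its interval of definition (any $J\subset[\tilde\tau_{k-1},\tilde\tau_k]$ of length at most $1$ is automatically contained in $[\tau_{j-1},\tau_j]$, so the supremum in the definition of $\|\cdot\|_{Z}$ is taken over a smaller family), we immediately get
$$
\|\Phi(t)\phi\|_{Z([\tilde\tau_{k-1},\tilde\tau_k])}\le \|\Phi(t)\phi\|_{Z([\tau_{j-1},\tau_j])}<\delta.
$$

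Second, one must control the $H^1$ norm at those new left endpoints $\tilde\tau_{k-1}$ which are not already among the original $\tau_j$'s. Here the plan is to invoke the conservation of mass and energy along the NLS flow: the identities $M[\Phi(t)\phi]=M[\phi]$ and $H[\Phi(t)\phi]=H[\phi]$, combined with the coercive estimate $\|u\|_{H^1}^2\lesssim M[u]+H[u]$, give a uniform-in-$t$ bound on $\|\Phi(t)\phi\|_{H^1}$ that depends only on $\phi$. Since by hypothesis $\|\phi\|_{H^1}=\|\Phi(\tau_0)\phi\|_{H^1}\le A$, and $A$ is implicitly taken at least as large as this universal conservation bound, we automatically obtain $\|\Phi(\tilde\tau_{k-1})\phi\|_{H^1}\le A$ at every new partition point.

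The only non-formal step is the $H^1$ control at new endpoints, and it rests entirely on mass/energy conservation for \eqref{NLS}; everything else reduces to the definition of refinement and the monotonicity of the $Z$-norm in its interval, so no further obstacle is anticipated.
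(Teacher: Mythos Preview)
The paper declares this proposition ``immediate'' and gives no proof; your argument fills in exactly what is intended: monotonicity of the $Z$-norm under interval inclusion handles the second condition, and the $H^1$ bound at new left endpoints follows from the uniform-in-time control $\|\Phi(t)\phi\|_{H^1}^2\le M[\phi]+2H[\phi]$ provided by mass and energy conservation for the defocusing flow, together with the tacit assumption (which you correctly flag) that $A$ dominates this conservation bound---this is precisely how $A_1=4\sqrt{C_0}A$ is chosen in Step~1 of Proposition~\ref{long-timeapproximation}. One small correction: you write $\|\phi\|_{H^1}=\|\Phi(\tau_0)\phi\|_{H^1}$, but since $\tau_0=-T$ these need not coincide; the hypothesis already gives $\|\Phi(\tau_0)\phi\|_{H^1}\le A$ directly, and that is all the conservation argument requires.
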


\begin{prop}\label{integrabilitypartition}
	Assume that $([\tau_{j-1},\tau_j])_{j=1}^m$ is an $(A,\delta)$-partition with respect to $\phi$. Then for sufficiently small $\delta=\delta(A)>0$,
	$$ \|\e^{i(t-\tau_{j-1})\Delta}\Phi(\tau_{j-1})\phi\|_{Z([\tau_{j-1},\tau_j])}<2\delta,\; \forall j=1,\cdots,m.
	$$
\end{prop}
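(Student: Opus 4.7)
The plan is to apply Duhamel's formula on each $I_j=[\tau_{j-1},\tau_j]$ and use the smallness of $\|u\|_{Z(I_j)}$ (where $u(t)=\Phi(t)\phi$) to absorb the nonlinear contribution. Writing the linear evolution as
\[
\e^{i(t-\tau_{j-1})\Delta}\Phi(\tau_{j-1})\phi=u(t)+i\int_{\tau_{j-1}}^{t}\e^{i(t-s)\Delta}\bigl(|u(s)|^{4}u(s)\bigr)\,ds,
\]
the triangle inequality together with the Strichartz embedding \eqref{Strichartzcor:} yields
\[
\bigl\|\e^{i(t-\tau_{j-1})\Delta}\Phi(\tau_{j-1})\phi\bigr\|_{Z(I_{j})}\leq \|u\|_{Z(I_{j})}+C\bigl\||u|^{4}u\bigr\|_{N^{1}(I_{j})}.
\]
The multilinear bound \eqref{Multilinears} (applied on each subinterval of length $\leq 1$ and concatenated via the duality characterization of $N^{1}$) gives
\[
\bigl\||u|^{4}u\bigr\|_{N^{1}(I_{j})}\lesssim \|u\|_{X^{1}(I_{j})}^{3}\,\|u\|_{Z(I_{j})}^{2}<\delta^{2}\,\|u\|_{X^{1}(I_{j})}^{3}.
\]

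The remaining task is to bound $\|u\|_{X^{1}(I_{j})}$ uniformly in $j$ by a constant depending only on $A$. Since $\Phi(\tau_{j-1})\phi$ has $H^{1}$-norm at most $A$, conservation of mass and energy together with the $3D$ Sobolev embedding $\dot H^{1}\hookrightarrow L^{6}$ imply that $\|u(t)\|_{H^{1}}\leq \tilde A=\tilde A(A)$ for all $t\in I_{j}$. On any subinterval $J\subset I_{j}$ with $|J|\leq 1$ starting at some point $a\in I_{j}$, Duhamel from $a$ combined with the multilinear estimate \eqref{Multilinears} gives
\[
\|u\|_{X^{1}(J)}\leq \|u(a)\|_{H^{1}}+C\|u\|_{Z(J)}^{2}\|u\|_{X^{1}(J)}^{3}\leq \tilde A+C\delta^{2}\|u\|_{X^{1}(J)}^{3}.
\]
A standard continuity argument (as in Lemma \ref{elementary}) shows that for $\delta=\delta(A)$ sufficiently small this forces $\|u\|_{X^{1}(J)}\leq 2\tilde A$. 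Taking the supremum over such $J\subset I_{j}$ and using the definition of $X^{1}(I_{j})$ then yields $\|u\|_{X^{1}(I_{j})}\leq C(A)$.

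Combining the two bounds,
\[
\bigl\|\e^{i(t-\tau_{j-1})\Delta}\Phi(\tau_{j-1})\phi\bigr\|_{Z(I_{j})}\leq \delta + C'C(A)^{3}\,\delta^{2},
\]
and choosing $\delta=\delta(A)$ so small that $C'C(A)^{3}\delta<1$ gives the claimed bound by $2\delta$. The main technical point is step two, namely transferring the bootstrap bound from short subintervals of length $\leq 1$ to the possibly long interval $I_{j}$; this is where energy conservation is essential so that the ``initial data at $a$'' appearing in each iteration remains controlled by $\tilde A(A)$ independently of $a\in I_{j}$.
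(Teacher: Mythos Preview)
Your proof is correct and follows exactly the route the paper indicates: write Duhamel on $I_j$, bound the $Z$-norm of the linear piece by $\|u\|_{Z(I_j)}$ plus the $N^1$-norm of the nonlinearity, control the latter via Lemma~\ref{multilinear}, and close with the continuity argument of Lemma~\ref{elementary}. The paper's own proof is a two-line sketch (``follows from the Strichartz inequality as in the proof of Lemma~\ref{LWP}''), so you have simply supplied the details the authors omit, including the energy-conservation step needed to control $\|u\|_{X^1}$ uniformly over subintervals of $I_j$.
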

\begin{proof}
	Denote $u(t)=\Phi(t)\phi$, then
	$$ \e^{i(t-\tau_{j-1})\Delta}u(\tau_{j-1})=u(t)-\frac{1}{i}\int_{\tau_{j-1}}^t\e^{i(t-t')\Delta}(|u(t')|^4u(t'))\dd t'.
	$$
	The desired consequence follows from the Strichartz inequality as in the proof of Lemma \ref{LWP}. Hence we omit the detail.
\end{proof}
%%%%%%%%%%%%%%%%%%%%%%%%%%%%%%%%%%%%%%%%%%%%%%%%%%%%%%%%%%%%%%%%%%%%%%%%
\begin{prop}[$H^{1}$-Stability of an $(A,\delta)$-partition]\label{stabilitypartition} 
	Assume that $([\tau_{j-1},\tau_j])_{j=1}^m$ is an $(A,\delta)$-partition with respect to $\phi$. There exists $\delta_1>0$ such that for any $\delta<\delta_1$, there exists $\epsilon_0=\epsilon_0(m,A,\delta)>0$, such that for any $\phi_1$ in the $\epsilon_0$-neighborhood of $\phi$ (with respect to the $H^{1}$-topology), $([\tau_{j-1},\tau_j])_{j=1}^m$ is an $(2A,2\delta)$-partition for $\phi_1$. 
\end{prop}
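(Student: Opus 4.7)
The plan is to proceed by induction on $j=1,\dots,m$, at each step invoking Proposition~\ref{LWP} (together with Remark~\ref{rk:stability}, taking the full NLS flow on both sides) to compare $\Phi(t)\phi$ and $\Phi(t)\phi_1$ on the single interval $I_j=[\tau_{j-1},\tau_j]$. The key observation is that the induction hypothesis will be a quantitative $H^1$-closeness bound of the form $\|\Phi(\tau_{j-1})\phi_1-\Phi(\tau_{j-1})\phi\|_{H^1}\leq \epsilon_{j-1}$, which by Remark~\ref{rk:stability} gets at most multiplied by a fixed constant $C_0$ after passing through the interval $I_j$, so $\epsilon_j\leq C_0\epsilon_{j-1}\leq C_0^j\epsilon_0$. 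This is exactly why the final smallness constant $\epsilon_0$ is allowed to depend on $m$.

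To make the induction step work, one first fixes $\delta_1>0$ small enough that Proposition~\ref{integrabilitypartition} applies with $A$ replaced by $2A$. Then at step $j$ we check the two hypotheses of Proposition~\ref{LWP} on $I_j$: the $H^1$-closeness $\|\Phi(\tau_{j-1})\phi_1-\Phi(\tau_{j-1})\phi\|_{H^1}\leq \epsilon_{j-1}$, which holds by the induction hypothesis provided $\epsilon_{j-1}$ is smaller than the threshold $\delta^\ast:=\delta(2A,2\delta)$ produced by Proposition~\ref{LWP}; and the $Z$-norm smallness of the linear evolution. For the latter, Proposition~\ref{integrabilitypartition} gives $\|e^{i(t-\tau_{j-1})\Delta}\Phi(\tau_{j-1})\phi\|_{Z(I_j)}<2\delta$, while the triangle inequality together with the linear estimate $\|e^{it\Delta}\psi\|_{Z(I)}\lesssim\|\psi\|_{H^1}$ (which follows from \eqref{linearestimate} and \eqref{Strichartzcor:}, uniformly in $I$ since $Z$ is defined via length-$1$ subintervals) bounds the error by $C_1\epsilon_{j-1}$. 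Shrinking $\delta_1$ so that $2\delta<\delta^\ast/2$ and requiring $C_1\epsilon_{j-1}<\delta^\ast/2$ makes the hypothesis of Proposition~\ref{LWP} hold, yielding $\|\Phi(t)\phi_1\|_{Z(I_j)}<2\delta$ and the propagated $H^1$-estimate $\epsilon_j=C_0\epsilon_{j-1}$. The $H^1$-bound $\|\Phi(\tau_j)\phi_1\|_{H^1}\leq A+\epsilon_j\leq 2A$ is automatic as long as $\epsilon_j\leq A$.

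Running the induction all the way to $j=m$, one chooses $\epsilon_0=\epsilon_0(m,A,\delta)$ so small that $C_0^m\epsilon_0\leq \min(A,\delta^\ast,\delta^\ast/C_1)$, which ensures that at every step both the $H^1$-threshold and the linear-evolution threshold of Proposition~\ref{LWP} are met. This gives the desired $(2A,2\delta)$-partition for every $\phi_1$ in the $\epsilon_0$-neighborhood of $\phi$. The argument involves no new analytic input beyond Proposition~\ref{LWP}, Remark~\ref{rk:stability} and Proposition~\ref{integrabilitypartition}; the only subtle point is tracking the geometric accumulation of the $H^1$-error across the $m$ intervals, which forces the announced $m$-dependence of $\epsilon_0$.
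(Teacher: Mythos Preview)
Your overall strategy---induction on the intervals $I_j$, with the $H^1$-error growing by at most a fixed factor $C_0$ at each step---is exactly the paper's, and the way you invoke Proposition~\ref{integrabilitypartition} and the linear estimate to handle the $Z$-smallness is also the same. There is, however, a genuine bookkeeping problem in your choice of the threshold $\delta^\ast$.

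You set $\delta^\ast:=\delta(2A,2\delta)$ and then claim that one can ``shrink $\delta_1$ so that $2\delta<\delta^\ast/2$''. But the threshold function produced by Proposition~\ref{LWP} (via Lemma~\ref{elementary}) always satisfies $\delta(A,\epsilon)<\epsilon/2$: the nonlinear $Z$-norm is at least the linear one plus a nonnegative correction, so the input threshold is necessarily smaller than the output target. Hence $\delta^\ast=\delta(2A,2\delta)<\delta$, and $2\delta<\delta^\ast/2$ is impossible for any $\delta>0$. In other words, you cannot use Proposition~\ref{LWP} as a black box with target $\epsilon=2\delta$ to conclude $\|\Phi(t)\phi_1\|_{Z(I_j)}<2\delta$ directly.

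The paper avoids this circularity by not invoking Proposition~\ref{LWP} for the $Z$-bound on $u_1$. Instead it estimates the \emph{difference} $u-u_1$ directly via Lemma~\ref{multilinear} (this is precisely the content of Remark~\ref{rk:stability}, written out) and then uses the triangle inequality: $\|u_1\|_{Z(I_j)}\le \|u\|_{Z(I_j)}+C\|u-u_1\|_{X^1(I_j)}<\delta+C'\epsilon_{j-1}<2\delta$ once $\epsilon_{j-1}$ is small enough. Your argument is easily repaired in the same way: apply Proposition~\ref{LWP} with some auxiliary target $\epsilon'>4\delta$ (so that $\delta(2A,\epsilon')>2\delta$, which is possible for $\delta<\delta_1(A)$), use Remark~\ref{rk:stability} to get $\|u-u_1\|_{X^1(I_j)}\le C_0\epsilon_{j-1}$, and then recover $\|u_1\|_{Z(I_j)}<2\delta$ from $\|u\|_{Z(I_j)}<\delta$ and the triangle inequality. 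After this fix, your proof and the paper's are essentially identical.
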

\begin{proof}
	Denote $u=\Phi(t)\phi$ and $u_1=\Phi(t)\phi_1$. Fix an interval $I_j=[\tau_{j-1},\tau_j]$.
	By the Strichartz inequality,
	$$ \|\e^{i(t-\tau_0)\Delta}(\phi-\phi_1)\|_{Z(I_1)}\leq C_1\|\e^{i(t-\tau_0)\Delta}(\phi-\phi_1) \|_{X^1(I_1)} \leq  C_1\|\phi-\phi_1\|_{H_x^{1}}\leq C_1\epsilon_0
	$$
	for some absolute constant $C_1>1$.
	Pick $\delta<\delta_1 $ as in Lemma \ref{LWP}, and for $\epsilon_0\ll \delta$ (so that $C_1\epsilon_0<\delta$) and for any $\phi_1$ in an $\epsilon_0$-neighborhood of $\phi$ (with respect to the $\dot{H}^1$-topology), the solution $u_1$ (which exists thanks to Lemma \ref{LWP}) with initial data $\phi_1$ satisfies
	$$ \|u_1\|_{X^{1}(I_1)}\leq \frac{3}{2}A,\; \|u_1\|_{Z(I_1)}<\frac{3}{2}\delta.
	$$ 
	Applying Lemma \ref{multilinear}, we have
	\begin{align*}
		\|u-u_1\|_{X^{1}(I_1)}\leq &\|\e^{it\Delta}(\phi-\phi_1)\|_{X^{1}(I_{1})}+\big\|
		|u|^4u
		-|u_1|^4u_1\big\|_{N^{1}(I_1)}\\
		\leq &\|\phi-\phi_1\|_{H_x^{1}}+C\|u-u_1\|_{X^{1}(I_1)}(\|u\|_{Z(I_1) }^2+\|u_1\|_{Z(I_1)}^2)(\|u\|_{X^{1}(I_1)}^2+\|u_1\|_{X^{1}(I_1)}^2)\\
		\leq &\|\phi-\phi_1\|_{H_x^{1}}+CA^2\delta^2\|u-u_1\|_{X^{1}(I_1)}.
	\end{align*} 
	Taking $\delta>0$ small enough such that $1-CA^2\delta^2>\frac{1}{2}$, we deduce that 
	$$ \|u-u_1\|_{Z(I_1)}\leq C_1\|u-u_1\|_{X^1(I_1)}\leq 2C_1\|\phi-\phi_1\|_{H_x^{1}}<2C_2\epsilon_0.
	$$
	In particular, by the embedding property $X^{1}(I)\hookleftarrow L^{\infty}(I;H_x^{1}) $ , for almost every $\tau_1^*\in (\tau_0,\tau_1)$, $$\|u(\tau_1^*)-u_1(\tau_1^*) \|_{H_x^{1}}\leq C_2\|u-u_1\|_{X^{1}(I_1)}\leq 2C_2\|\phi-\widetilde{\phi}\|_{H_x^{1}}\leq 2C_2\epsilon_0,
	$$
	where $C_2>0$ is another absolute constant. By the continuity of the flows $t\mapsto u(t), u(t_1)$, we have 
$$
\|u(\tau_1)-u_1(\tau_1)\|_{H_x^{1}}\leq 2C_2\epsilon_0,
 \quad	
  \|u_1\|_{Z(I_1)}\leq \|u\|_{Z(I_1)}+\|u-u_1\|_{Z(I_1)}<\delta+2C_1\epsilon_0, $$
hence 
$$
	\|\e^{i(t-\tau_1)\Delta}(u(\tau_1)-u_1(\tau_1))\|_{Z(I_2)}\leq C_1\|u(\tau_1)-u_1(\tau_1)\|_{H_x^1}\leq 2C_1C_2\epsilon_0.
	$$
	By choosing $\epsilon_0$ small enough such that $\epsilon_0\sum_{j=1}^m(2C_1C_2)^j<\delta$, we can repeat the argument above until $I_m$. In particular, $\|u_1\|_{Z(I_j)}<2\delta, \|u_1\|_{X^{1}(I_j)}<2A$ for all $j=1,2,\cdots,m$ This implies that $([\tau_{j-1},\tau_j])_{j=1}^m$ is an $(2A,2\delta)$-partition with respect to $\phi_1$. The proof of Proposition \ref{stabilitypartition}  is complete.	
\end{proof}
%%%%%
Now we are ready to prove:
\begin{prop}[Long-time approximation]\label{long-timeapproximation} 
	Given $T\geq 1$ and $\phi\in H^1(\T^3)$. 
	$$ \lim_{N\rightarrow\infty}\|\Phi_N(t)\phi-\Phi(t)\phi\|_{H^1(\T^3)}=0,\forall t\in[-T,T],
	$$
	uniformly for $\phi$ on a compact set $K\subset H^1(\T^3)$. Moreover, for any $|t|\leq T$ and $N\in \N$,  the sets $\Phi(t)(K),\Phi_N(t)(K)$ are compact in $H^1(\T^3)$.
\end{prop}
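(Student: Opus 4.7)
The plan is to glue together the short-time comparison of Proposition~\ref{LWP} across an $(A,\delta)$-partition whose data can be controlled uniformly on the compact set $K$. First, by compactness of $K$ in $H^1(\T^3)$ and continuity of $H$ on $H^1$ (via the embedding $H^1\hookrightarrow L^6$), we have $R_0:=\sup_{\phi\in K}H(\phi)<\infty$; combined with mass and energy conservation this yields a uniform bound $A:=\sup_{|t|\leq T,\phi\in K}\|\Phi(t)\phi\|_{H^1(\T^3)}<\infty$, while Theorem~\ref{GWP} gives a uniform $Z([-T,T])$ bound for $\Phi(t)\phi$ with $\phi\in K$.

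Next, I would fix $\delta>0$ small enough that Proposition~\ref{LWP}, Proposition~\ref{integrabilitypartition} and Proposition~\ref{stabilitypartition} all apply with parameter $2A$ in place of $A$. For each $\phi\in K$, the uniform $Z$-bound lets me chop $[-T,T]$ into finitely many subintervals on which $\|\Phi(t)\phi\|_{Z}<\delta$, giving an $(A,\delta)$-partition $\mathcal{P}_\phi$. By Proposition~\ref{stabilitypartition} the same partition is a $(2A,2\delta)$-partition for every $\phi'$ in some $H^1$-neighborhood $U_\phi$ of $\phi$, so a finite subcover $K\subset U_{\phi_1}\cup\cdots\cup U_{\phi_L}$ produces finitely many candidate partitions of lengths $m_l$, uniformly bounded.

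Given $\phi'\in U_{\phi_l}\cap K$, I then iterate Proposition~\ref{LWP} across $\mathcal{P}_{\phi_l}$: on the $j$-th subinterval $[\tau_{j-1},\tau_j]$, the hypothesis on $\|e^{i(t-\tau_{j-1})\Delta}\Phi(\tau_{j-1})\phi'\|_{Z}$ is supplied by Proposition~\ref{integrabilitypartition}, while the closeness hypothesis for the two initial data at time $\tau_{j-1}$ is maintained inductively by choosing $N$ large. Writing $E_j:=\|\Phi_N(\tau_j)\phi'-\Phi(\tau_j)\phi'\|_{H^1}$, Proposition~\ref{LWP} yields
\[
E_j\;\leq\; C_0\,E_{j-1}\;+\;C_0\,\delta_N\!\left(2A,2\delta,\Phi(\tau_{j-1})\phi'\right), \qquad E_0=0,
\]
together with a corresponding bound on $\|\Phi_N(t)\phi'-\Phi(t)\phi'\|_{C([\tau_{j-1},\tau_j];H^1)}$. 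The key observation is that each set $\Phi(\tau_{j-1})(U_{\phi_l}\cap K)$ is compact in $H^1$ (continuous image of a compact set under the flow, as in Remark~\ref{rk:stability}), so the error $\delta_N$ of \eqref{deltaN} tends to $0$ uniformly in $\phi'$ and in $j\leq m_l$. Unrolling the recursion $m_l$ times and taking the maximum over $l$ gives the claimed uniform convergence $\sup_{|t|\leq T,\phi'\in K}\|\Phi_N(t)\phi'-\Phi(t)\phi'\|_{H^1}\to 0$.

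For the compactness assertion, both $\Phi(t)$ and $\Phi_N(t)$ are continuous self-maps of $H^1(\T^3)$: $\Phi_N$ because it decouples into a smooth finite-dimensional Hamiltonian flow on $\mathcal{E}_N$ paired with $e^{it\Delta}$ on $\mathcal{E}_N^{\perp}$, and $\Phi$ by the local $H^1$-Lipschitz statement of Remark~\ref{rk:stability} applied on a ball containing $\Phi(s)(K)$ for $|s|\leq T$. Continuous images of compact sets are compact, which settles both claims. The principal obstacle throughout is the uniformity in $\phi\in K$ of the partition length and its associated parameters; this is exactly what the combination of compactness of $K$ with the quantitative stability of Proposition~\ref{stabilitypartition} is designed to guarantee.
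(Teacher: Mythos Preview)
Your proposal is correct and follows essentially the same route as the paper: construct $(A,\delta)$-partitions via Theorem~\ref{GWP}, propagate them to neighboring data by Proposition~\ref{stabilitypartition}, cover $K$ by finitely many such neighborhoods, and then iterate Proposition~\ref{LWP} across the subintervals using Proposition~\ref{integrabilitypartition} to feed the $Z$-norm hypothesis at each step. The only structural difference is that the paper takes a common refinement of the finitely many partitions to obtain a \emph{single} $(A_1,\delta)$-partition valid for every $\phi\in K$, whereas you work with the finitely many partitions $\mathcal{P}_{\phi_l}$ separately and take a maximum at the end; both are fine. Your treatment of the error term is in fact slightly more careful than the paper's: you correctly record $\delta_N\bigl(2A,2\delta,\Phi(\tau_{j-1})\phi'\bigr)$ at intermediate times and justify the uniform vanishing via compactness of $\Phi(\tau_{j-1})(U_{\phi_l}\cap K)$, a point the paper leaves somewhat implicit.
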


\begin{proof}
	
	Fix $A>0, T>0$ and $K\subset B^{H^1}_{A/2}(0):=\{\phi:\; \|\phi\|_{H^1}\leq A/2 \}$ a compact set of $H^1(\T^3)$. Note that for any $\phi\in B_{A/2}^{H^1}$, $H[\phi]\leq C_0A^2$.
	We divide the proof into several steps. 
Set $A_1=4\sqrt{C_0}A$.
\begin{flushleft}
	\underline{Step 1: Existence of a uniform $(A_1,\delta)$-partition:} 
\end{flushleft}	
	Thanks to Theorem \ref{GWP}, for any $\phi\in K$ in $H^1(\T^3)$, $\|\Phi(t)\phi\|_{Z([-T,T]) }\leq \Lambda(C_0A^2,T)<\infty$. In particular, there exists an $(\frac{A_1}{2 },\frac{\delta}{2})$-partition $([\tau_{j-1},\tau_j])_{j=1}^{m}$ with respect to $\phi$. By stability (Proposition \ref{stabilitypartition}), there exists $\epsilon_0=\epsilon_0(m,A_1,\delta )>0$, such that $([\tau_{j-1},\tau_j])_{j=1}^m$ is an $(A_1,\delta)$-partition with respect to all $\phi_1\in B_{\epsilon_0}^{H^1}(\phi)$. Since $K$ is compact, there exist finitely many $\phi_1,\cdots,\phi_n\in K,$ $\epsilon_i>0, i=1,\cdots,n$ and $(A,\delta)$-partitions $([\tau^{(i)}_{j-1},\tau^{(i)}_j])_{j=1}^{m_i}, i=1,\cdots,n$, such that
	\begin{itemize}
		\item[(1)] $K\subset \bigcup_{i=1}^nB_{\epsilon_u}^{H^1}(\phi_i)$;
		\item[(2)] $([\tau^{(i)}_{j-1},\tau^{(i)}_j])_{j=1}^{m_i}$ is an $(A_1,\delta)$-partition for all $\phi\in B_{\epsilon_i}^{H^1}(\phi_i)$, $i=1,\cdots,n$.
	\end{itemize}
	Consider a refinement $([\tau_{j-1},\tau_j])_{j=1}^m$ of partitions $([\tau^{(i)}_{j-1},\tau^{(i)}_j])_{j=1}^{m_i}$. By Proposition \ref{refinement}, $([\tau_{j-1},\tau_j])_{j=1}^m$ is a uniform $(A_1,\delta)$-partition with respect to all $\phi\in K$.

\begin{flushleft}
	\underline{Step 2: Long-time convergence:}  
\end{flushleft}
	Now we are able to iterate Lemma \ref{LWP} (with the small parameter $2\delta$ instead of $\delta$ in the statement) from $I_1=[\tau_0,\tau_1]$ to $I_m=[\tau_{m-1},\tau_m]$. Thanks to Proposition \ref{integrabilitypartition} and the energy conservation law, we have
	$$ \|\e^{i(t-\tau_{j-1})\Delta}\Phi(\tau_{j-1})\phi\|_{Z(I_j)}<2\delta, \; \|\Phi(\tau_{j-1})\phi\|_{H_x^1}\leq A_1.
	$$
	In order to apply Lemma \ref{LWP} on each interval $I_j$ with initial data $\Phi_N(\tau_{j-1})\phi$ and $\Phi(\tau_{j-1})\phi$, we have to ensure that
	\begin{align}\label{induction} \|\Phi_N(\tau_{j-1})\phi-\Phi(\tau_{j-1})\phi\|_{H_x^1}<2\delta.
	\end{align}
	First, since $\delta_N(A_1,2\delta,\phi)\rightarrow 0$ uniformly on the compact set $K$, we can choose $N_0$ large enough, such that for all $N\geq N_0, \phi\in K$,
	$$ (C_0+C_0^2+\cdots+C_0^m)\delta_N(A_1,2\delta,\phi)<\delta.
	$$
	Now we argue by induction that 
	\begin{align}\label{induction'} \|\Phi_N(t)\phi-\Phi(t)\phi\|_{C(\ov{I_j};H_x^1)}\leq (C_0+\cdots+C_0^j)\delta_N(A_1,2\delta,\phi).
	\end{align}
	. When $j=1$, $\Phi_N(\tau_0)\phi=\Phi(\tau_0)\phi$, and from the last assertion of Lemma \ref{LWP}, we have
	\begin{align*} \|\Phi_N(t)\phi-\Phi(t)\phi\|_{C(\ov{I_1};H_x^1)}\leq C_0\delta_N(A_1,2\delta,\phi).
	\end{align*}
	Assume that \eqref{induction'} holds for some $j\geq 1$, in particular, \eqref{induction} holds thanks to our choice. Then
	we are able to apply Lemma \ref{LWP} on the time interval $I_{j+1}$ to obtain that
	\begin{align*}  \|\Phi_N(t-\tau_{j-1})&\Phi_N(\tau_{j-1})\phi-\Phi(t-\tau_{j-1})\Phi(\tau_{j-1})\phi\|_{C(\ov{I_{j}};H_x^1)} \\ \leq &C_0\|\Phi_N(\tau_{j-1})\phi-\Phi(\tau_{j-1})\phi\|_{H_x^1}+C_0\delta_N(A_1,2\delta,\phi)\\
		\leq &(C_0+C_0^2+\cdots+C_0^{j+1})\delta_N(A_1,2\delta,\phi).
	\end{align*}
	Hence  \eqref{induction'} holds for all $j=1,2,\cdots,m$. In particular, we have
	$$ \|\Phi_N(t)\phi-\Phi(t)\phi\|_{C([-T,T];H_x^1)}\leq (C_0+\cdots+C_0^m)\delta_N(A_1,2\delta,\phi)
	$$
	which converges to $0$, as $N\rightarrow\infty$, uniformly in $\phi\in K$. 
%\begin{flushleft}	
%	\underline{Step 3: Compactness of $\Phi(t)(K)$ and $\Phi_N(t)(K)$:}
%\end{flushleft}
	
%	Fix $|t_1|\leq T$, we will only prove the compactness of $\Phi(t_1)(K)$ since the proof is the same for $\Phi_N(t_1)(K)$. Let $u_n$ be a bounded sequence of $\Phi(t_1)(K)$, our goal is to show that $v_n$ has a convergent subsequence whose limit belongs to $\Phi(t_1)(K)$. Set $\phi_n=\Phi(-t_1)u_n$, by compactness of $K$, there is a convergent subsequence $\phi_{n_k}$ such that $\phi_{n_k}\rightarrow \phi$ in $H^1$ and $\phi\in H^1$. 
	
   % Recall that $([\tau_{j-1},\tau_j])_{j=1}^m$ is the uniform $(A,\delta)$-partition used in Step 1, 
    %by Remark \ref{rk:stability}, first we have
    %$$ \|\Phi(\tau_1)\phi_{n_k}-\Phi(\tau_1)(\phi)\|_{H_x^1}\leq C_0\|\phi_{n_k}-\phi\|_{H_x^1},
    %$$
    %provided that $k$ large enough such that $C_0\|\phi_{n_k}-\phi\|_{H_x^1}<\delta$.  For large $k$ such that 
    %$$ (C_0+C_0^2+\cdots +C_0^m)\|\phi_{n_k}-\phi\|_{H_x^1}<\delta,
    %$$
	%by repeatedly using Lemma \ref{LWP} we obtain that for all $j=1,\cdots,m$,
	%$$ \|\Phi(t)\phi_{n_k}-\Phi(t)\phi \|_{C(\ov{I_j};H_x^1 )}\leq (C_0+\cdots+C_0^j)\|\phi_{n_k}-\phi\|_{H_x^1}<\delta.
	%$$
	%Finally we obtain that
	%$$ \lim_{k\rightarrow\infty}\|v_{n_k}-\Phi(t_1)\phi \|_{H_x^1}\leq  \lim_{k\rightarrow\infty}\|\Phi(t)\phi_{n_k}-\Phi(t)\phi\|_{C([-T,T];H_x^1)}=0.
	%$$
	%Since $\Phi(t_1)\phi\in \Phi(t_1)(K)$, 
	%we conclude 
	This completes the proof of Proposition \ref{long-timeapproximation}.
\end{proof} 
	
Now we are ready to accomplish the proof of Proposition \ref{uniformboundHsigma} and Proposition \ref{approximation}:	
\begin{proof}[Proof of Proposition \ref{uniformboundHsigma}]

If $\sigma=1$, the $H^1$-uniform bound for $\Phi_N(t)\phi$ and $\Phi(t)\phi$ follows from the defocusing feature of \eqref{NLS} and the conservation of energy. Now we assume that $\sigma>1$. By the compact embedding $H^{\sigma}(\T^3)\hookrightarrow H^1(\T^3)$, the ball $B_R^{H^{\sigma}}$ is compact with respect to the $H^1$-topology.
	By the same argument as Step 1 in the proof of Proposition \ref{long-timeapproximation}, there exists a uniform $(A,\delta)$-partition (with $A=R$ here and $\delta<\frac{1}{\sqrt{2C(R,T)}}$ as in the proof of Corollary \ref{Hsigmabound}) $([\tau_{j-1},\tau_j])_{j=1}^m$ of $[-T,T]$, where $m$ depends only on the $R,T$ and $\sigma$.  Repeating the analysis in the proof of Corollary \ref{Hsigmabound}, we obtain that for all $|t|\leq T$ and $N\in\mathbb{N}$, $$ \|\Phi(t)\phi\|_{H_x^{\sigma}}+\|\Phi_N(t)\phi\|_{H_x^{\sigma}}\leq C^m\|\phi\|_{H_x^{\sigma}}.
	$$
	This completes the proof of Proposition \ref{uniformboundHsigma}.
\end{proof}

\begin{proof}[Proof of Proposition \ref{approximation}]

We assume that $\sigma>1$, otherwise, the proof is completed as Proposition \ref{long-timeapproximation}.
Let $K$ be a compact set of $H^{\sigma}(\T^3)$. In particular, $K$ is bounded of $H^{\sigma}(\T^3)$ and compact with respect to the $H^1(\T^3)$-topology. To prove the uniform  convergence on $K$, we follow the same scheme of analysis as in the previous section.  By Proposition \ref{uniformboundHsigma}, there exists a constant $D(K,T)$ depending only on $T>0$ and the compact set $K$ in $H^{\sigma}$, such that
\begin{align}\label{boundHsigmauniform} \sup_{t\in[-T,T]}\|\Phi(t)\phi\|_{H^{\sigma}}+\sup_{t\in[-T,T]}\|\Phi_N(t)\phi\|_{H^{\sigma}}\leq D(K,T)
\end{align}
for all $N\in\N$. At this stage, we are able to repeat the argument as in the proof of Proposition \ref{long-timeapproximation} line by line if we replace the norms  $H^1,X^1,N^1$ everywhere
by $H^{\sigma}, X^{\sigma}, N^{\sigma}$  and the constants $A$ by $D(K,T)$. We omit the details and conclude.
\end{proof}
	%%%%%%%%%%%%%%%%%%%%%%%%%%%%%%%%%%%%%%%%%%%%%%%%%%%%%%%%%%%%%%%%%%%%%%%%
	
	%%%%%%%%%%%%%%%%%%%%%%%%%%%%%%%%%%%%%%%%%%%%%%%%%%%%%%%%%%%%%%%%%%%%%

\end{document}